\documentclass[11pt, twoside]{amsart}
\usepackage{graphics}
\usepackage{graphicx}
\usepackage{amssymb}
\usepackage{amsmath}
\usepackage{amsthm}
\usepackage{color}
\usepackage{xcolor}
\usepackage{makecell}
\usepackage{pgf,tikz,pgfplots}
\usepackage{mathrsfs}
\usetikzlibrary{arrows}
\usepackage{graphicx}
\usepackage{indentfirst}

\allowdisplaybreaks

\textwidth=17.6cm
\textheight=23.6cm
\oddsidemargin -0.55cm
\evensidemargin -0.55cm

\textwidth=15cm
\textheight=21.16cm
\oddsidemargin 0.25cm
\evensidemargin 0.25cm

\def\rr{{\mathbb R}}
\def\rn{{{\rr}^n}}

\renewcommand\hat{\widehat}

\def\supp{{\rm{\,supp\,}}}

\def\r{\Big}
\def\lf{\Big}

\def\bint{{\ifinner\rlap{\bf\kern.30em--}
\int\else\rlap{\bf\kern.35em--}\int\fi}\ignorespaces}

\def\sbint{{\ifinner\rlap{\bf\kern.32em--}
\hspace{0.078cm}\int\else\rlap{\bf\kern.45em--}\int\fi}\ignorespaces}

\newtheorem{theorem}{Theorem}[section]
\newtheorem{lemma}[theorem]{Lemma}
\newtheorem{conjecture}[theorem]{Conjecture}

\newtheorem{proposition}[theorem]{Proposition}

\theoremstyle{definition}
\newtheorem{remark}[theorem]{Remark}

\newtheorem{definition}[theorem]{Definition}
\numberwithin{equation}{section}

\numberwithin{equation}{section}



\numberwithin{equation}{section}

\begin{document}

\arraycolsep=1pt

\title{The space-time estimates for the Schr\"odinger equation}

\author{Junfeng Li}
\address{School of Mathematical Sciences, Dalian University of Technology, Dalian, LN, 116024, China}

\email{junfengli@dlut.edu.cn}

\author{Changxing Miao}
\address{Institute of Applied Physics and Computational Mathematics, Beijing, 100088, China}
\email{miao\_changxing@iapcm.ac.cn}

\author{Ankang Yu}
\address{School of Mathematical Sciences, Dalian University of Technology, Dalian, LN, 116024, China}
\email{yak@mail.dlut.edu.cn}

\thanks{This work was supported by the National Key R\&D program of China; No. 2022YFA1005700. Junfeng Li was partially supported by NSF of China grant: No. 12071052 and the fundamental research funds for the central universities.} 

\subjclass[2010]{42B20, 42B25}

\keywords{Schr\"odinger group; space-time estimate; polynomial partitioning; refined Strichartz estimate}

\begin{abstract}
In this paper, we studied the space-time estimates for the solution to the Schr\"odinger equation. By polynomial partitioning, induction arguments, bilinear to linear arguments and broad norm estimates, we set up several maximal estimates for the Schr\"odinger equation with high frequency input data. By these maximal estimates we obtain the sharp global space-time estimate when $n=2$ and improve the known results in the critical cases when $n\geq 3$. The maximal estimate for $n=2$ is also used to extend the results of  the local space-time estimates for the solution to the Schr\"odinger equation.
\end{abstract}
	
\maketitle

\vspace{-0.8cm}


\section{Introduction\label{s1}}
Given the free Schr$\ddot{\text{o}}$dinger equation:
$$i\partial_tu+\Delta u=0,\ \  \text{in}\ \  \mathbb {R}^{n+1}$$
with an initial data $f\in\mathcal{S}(\mathbb R^n)$, its solution can be written as
$$u(t,x):=e^{it\Delta}f(x)=\int_{\mathbb{R}^n}\hat{f}(\xi)e^{2\pi i (x\cdot\xi-2\pi t|\xi|^{2})}d\xi,$$
where $\hat{f}(\xi):=\int_{\mathbb{R}^n}f(x)e^{-2\pi ix\cdot\xi}dx$. A nature problem is to understand the solution $u(t,x)$ with an initial data $f\in H^s(\mathbb R^n)$, the Sobolev space. The well-known Srichartz estimate is a typical estimate. It is a time-space type estimate which weight the solution in $L^r_tL^p_x(\mathbb R^{n+1})$. Such a estimate is  crucial in the study of the linear and nonlinear Schr\"odinger equations. By the end of the last century, it was fully established. \begin{theorem}\cite{kt98,s77}.\label{t1.0}
	Let $r\geq 2$, $2\leq p<\infty$ and $\frac{n}{p}+\frac{2}{r}\leq\frac{n}{2}$. Then
\begin{equation}\label{Strichartz}
	\|e^{it\Delta}f\|_{L_{t}^{r}(\mathbb{R}, L_{x}^{p}(\mathbb{R}^n) )}\leq C\|f\|_{\dot{H}^{s}({\mathbb{R}^n})},  \  \  \  \    s=\frac{n}{2}-\frac{n}{p}-\frac{2}{r}.
\end{equation}
\end{theorem}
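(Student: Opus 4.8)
The estimate \eqref{Strichartz} is classical; I sketch the standard route. The plan is to first reduce to the admissible, scaling-critical case $s=0$, then run a $TT^{*}$ argument powered by the fixed-time dispersive estimate and the one-dimensional Hardy--Littlewood--Sobolev inequality, and finally treat the endpoint $r=2$ separately. For the reduction, suppose $\frac{n}{p}+\frac{2}{r}<\frac{n}{2}$, so $s>0$; put $\widehat{g}(\xi)=|\xi|^{s}\widehat{f}(\xi)$, so that $g\in L^{2}(\rn)$ with $\|g\|_{2}=\|f\|_{\dot H^{s}}$ and $e^{it\Delta}f=|\nabla|^{-s}e^{it\Delta}g$. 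The Sobolev embedding $\|\,|\nabla|^{-s}h\,\|_{L^{p}_{x}}\ls\|h\|_{L^{\tilde p}_{x}}$ with $\tfrac{1}{\tilde p}=\tfrac{1}{p}+\tfrac{s}{n}$ then reduces the claim to $\|e^{it\Delta}g\|_{L^{r}_{t}L^{\tilde p}_{x}}\ls\|g\|_{2}$, and one checks that $\frac{n}{\tilde p}+\frac{2}{r}=\frac{n}{2}$ with $2\le\tilde p<\fz$. So it suffices to prove \eqref{Strichartz} on the line $\frac{n}{p}+\frac{2}{r}=\frac{n}{2}$, the case $p=2$ (which forces $r=\fz$) being just the conservation law $\|e^{it\Delta}f\|_{L^{\fz}_{t}L^{2}_{x}}=\|f\|_{2}$.

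\emph{The $TT^{*}$ argument.} With $Tf=e^{it\Delta}f$, duality makes $\|Tf\|_{L^{r}_{t}L^{p}_{x}}\ls\|f\|_{2}$ equivalent to the boundedness of $TT^{*}F(t)=\int_{\rr}e^{i(t-\tau)\Delta}F(\tau)\,d\tau$ from $L^{r'}_{t}L^{p'}_{x}$ to $L^{r}_{t}L^{p}_{x}$. Interpolating the dispersive bound $\|e^{it\Delta}h\|_{L^{\fz}_{x}}\ls|t|^{-n/2}\|h\|_{L^{1}_{x}}$ (read off from the explicit oscillatory kernel) against the isometry $\|e^{it\Delta}h\|_{L^{2}_{x}}=\|h\|_{L^{2}_{x}}$ gives $\|e^{it\Delta}h\|_{L^{p}_{x}}\ls|t|^{-n(1/2-1/p)}\|h\|_{L^{p'}_{x}}$ for $2\le p\le\fz$; then Minkowski's inequality followed by the one-dimensional Hardy--Littlewood--Sobolev inequality with kernel $|t|^{-n(1/2-1/p)}$ yields the $TT^{*}$ bound, the exponent bookkeeping being exactly $\tfrac{2}{r}=\tfrac{n}{2}-\tfrac{n}{p}$. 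The Hardy--Littlewood--Sobolev step applies provided $0<n(\tfrac12-\tfrac1p)<1$ and $2<r<\fz$, which covers every admissible pair except the endpoint $r=2$.

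\emph{The endpoint and the obstacle.} The only remaining case is $r=2$, which under $2\le p<\fz$ forces $n\ge3$ and $p=\frac{2n}{n-2}$; this is precisely where the Hardy--Littlewood--Sobolev step fails, and it is the main difficulty. Here I would follow \cite{kt98}: dyadically split the time separation into the regions $|t-\tau|\sim 2^{j}$, use the dispersive estimate to bound each piece as a bilinear form with a small gain in a pair of Lebesgue exponents lying on either side of the endpoint, interpolate to manufacture a sliver of room, and then sum the dyadic pieces by the abstract bilinear interpolation lemma of Keel--Tao, which recovers the $L^{2}_{t}$ estimate in spite of the logarithmic failure of naive summation at the sharp exponent. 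Since the cases $n=1,2$ never attain $r=2$ within the stated hypotheses, the non-endpoint argument already settles those dimensions completely.
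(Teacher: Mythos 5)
The paper does not prove Theorem \ref{t1.0} itself; it is quoted from \cite{kt98,s77} as a known result, with no argument supplied. Your outline is a correct reproduction of the standard proof from those references: the Sobolev-embedding reduction to the scaling-critical line is sound (and your exponent check $\frac{1}{\tilde p}=\frac12-\frac{2}{nr}$, which stays strictly positive because $p<\infty$, is the right thing to verify), the $TT^{*}$ plus dispersive plus one-dimensional Hardy--Littlewood--Sobolev chain correctly handles $2<p,r<\infty$ on the critical line, $(p,r)=(2,\infty)$ is the $L^{2}$ isometry, and the dyadic/bilinear-interpolation argument you sketch is precisely how \cite{kt98} reaches the endpoint $(p,r)=\big(\tfrac{2n}{n-2},2\big)$, $n\ge3$, which is the only case the non-endpoint scheme misses. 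Since there is no in-paper proof to compare against, the only thing to note is that your write-up matches the cited sources.
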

In this paper, we first consider the space-time counter part of the Strichartz estimate. That is weight the function $u(t,x)$ in the space $L^p_xL^r_t(\mathbb R^{n+1})$.
In \cite{p02} Planchon formulated the following space-time conjecture. 
\begin{conjecture}\label{c1.1}
	Let  $2\leq r,p<\infty$ and $\frac{n+1}{p}+\frac{1}{r}\leq\frac{n}{2}$. Then
	\begin{equation}\label{S_{p,r}}
		\|e^{it\Delta}f\|_{L_{x}^{p}L^r_t(\mathbb{R}^{n+1})}\leq C\|f\|_{\dot{H}^{s}({\mathbb{R}^n})},  \  \  \  \    s=\frac{n}{2}-\frac{n}{p}-\frac{2}{r}.
	\end{equation}
\end{conjecture}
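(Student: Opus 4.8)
We outline our approach to Conjecture~\ref{c1.1}; it establishes the conjecture in full when $n=2$ and gives a partial result when $n\ge3$. Two corners of the conjectured region --- $p=r$ and $r=2$ --- already follow from the Stein--Tomas restriction theorem (for the paraboloid and for spheres, respectively) together with interpolation, so the content lies in the remaining range, which is where the bilinear/multilinear machinery enters.

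\textbf{Step 1: reduction to a localized estimate.} Littlewood--Paley decompose $f=\sum_kP_kf$ with $\widehat{P_kf}$ supported in $|\xi|\sim2^k$. Since $2\le p,r<\fz$, the classical Littlewood--Paley inequality in $x$ followed by the triangle inequality in $L^{p/2}_xL^{r/2}_t$ gives
\[
\|e^{it\Delta}f\|_{L^p_xL^r_t(\mathbb R^{n+1})}\ls\Big(\sum_k\|e^{it\Delta}P_kf\|_{L^p_xL^r_t(\mathbb R^{n+1})}^2\Big)^{1/2},
\]
so it suffices to prove the frequency--localized bound $\|e^{it\Delta}g\|_{L^p_xL^r_t}\ls\lz^{s}\|g\|_2$ for $\widehat g$ supported in $|\xi|\sim\lz$, which by the parabolic rescaling $(x,t)\mapsto(\lz x,\lz^2t)$ reduces to $\lz=1$. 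A mixed--norm version of Tao's $\ez$--removal lemma then deduces the global $\lz=1$ estimate, for exponents strictly inside the region, from the local inequality
\[
\|e^{it\Delta}g\|_{L^p_xL^r_t(B_R^{n+1})}\ls_\ez R^\ez\|g\|_2\qquad(\widehat g\subset\{|\xi|\sim1\},\ R\ge1),
\]
uniformly in $R\ge1$; the critical line $\frac{n+1}{p}+\frac1r=\frac n2$ is then recovered by interpolating with the two Stein--Tomas corners and using the $\ell^2$ summation over $k$.

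\textbf{Step 2: the local estimate.} Decompose $g=\sum_\theta g_\theta$ over $R^{-1/2}$-caps; on $B_R$ each $e^{it\Delta}g_\theta$ is, up to negligible tails, a wave packet concentrated on a tube $T_\theta\subset\mathbb R^{n+1}$ of dimensions $R^{1/2}\times\cdots\times R^{1/2}\times R$. Run the broad--narrow dichotomy (Bourgain--Guth) or polynomial partitioning (Guth): at each point either caps from well--separated families contribute comparably (\emph{broad}), or the significant caps lie within an $R^{-1/2}$-neighbourhood of a hyperplane (\emph{narrow}). The broad part is controlled through Guth's broad norm $\|e^{it\Delta}g\|_{BL^p_{k,A}}$ and, ultimately, through the bilinear extension estimate for the paraboloid when $n=2$ (Tao's bilinear restriction theorem, whose range is sharp) or the Bennett--Carbery--Tao $k$-linear estimate when $n\ge3$, both recast in the anisotropic norm: one needs $\|(e^{it\Delta}g_1)(e^{it\Delta}g_2)\|_{L^{p/2}_xL^{r/2}_t(B_R)}\ls_\ez R^\ez\|g_1\|_2\|g_2\|_2$ for transverse caps in the full range of $(p,r)$, after which a bilinear--to--linear reduction (Tao--Vargas--Vega) returns the linear bound. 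The narrow part is handled by inducting on $R$ together with a lower--dimensional instance of the estimate --- on each slab $e^{it\Delta}g$ behaves like an $(n-1)$-dimensional extension operator, and parallel slabs are recombined by C\'ordoba-type $L^2$ orthogonality. A refined Strichartz--type inequality, which gains precisely when the wave packets spread out over many tubes, is the device that converts the broad and narrow information into the desired $L^p_xL^r_t$ bound.

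\textbf{Main obstacle.} The crux is running the bilinear (for $n=2$) and multilinear (for $n\ge3$) restriction machinery in the mixed norm $L^p_xL^r_t$ rather than in $L^p_{t,x}$: H\"older's inequality in the $t$ variable is not free, so the known $L^p_{t,x}$ bilinear/multilinear theorems cannot simply be quoted, and the induction on scales must be carried out directly --- these are precisely the ``maximal estimates for high-frequency data'' of the abstract --- while bookkeeping the $x$- and $t$-exponents so that the sharp relation $\frac{n+1}{p}+\frac1r\le\frac n2$ and the value $s=\frac n2-\frac np-\frac2r$ emerge, and so that the frequency summation right on the critical line (the regime $r\to\infty$ being the most delicate) goes through. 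For $n=2$ the bilinear restriction theorem is strong enough that the broad term loses nothing, so the scheme produces the full conjecture; for $n\ge3$ only the multilinear estimate is available, it costs a power of $R$ in the broad term, and the method improves on --- but does not attain --- the conjectured range.
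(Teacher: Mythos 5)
Your Step~1 reduces the conjecture to the single linear local estimate $\|e^{it\Delta}g\|_{L^p_xL^r_t(B_R)}\lesssim_\epsilon R^\epsilon\|g\|_2$ for $\hat g$ supported on a unit annulus, then invokes $\epsilon$-removal (which, as you acknowledge, only covers the open subcritical region) and proposes to recover the critical line $\tfrac{n+1}{p}+\tfrac1r=\tfrac n2$ ``by interpolating with the two Stein--Tomas corners.'' That final step is the genuine gap. Both Stein--Tomas corners --- Vega's point $(p,r)=(\tfrac{2(n+1)}{n-1},2)$ and the paraboloid point $(p,r)=(\tfrac{2(n+2)}{n},\tfrac{2(n+2)}{n})$ --- and every target $(\tilde p,\tilde r)$ with $\tfrac{2(n+1)}{n}<\tilde p<\tfrac{2(n+2)}{n}$ lie on the same critical line; any interpolation segment joining a subcritical point to a corner touches that line only at the corner, so no combination of the subcritical region with the corners produces new critical points. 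This is precisely the obstruction the paper is built to circumvent: Remark~1.6 records that the sharp linear local $L^3_xL^\infty_t$ maximal estimate --- the best possible linear local input at scale $1$ --- is \emph{insufficient} to reach the critical case for $n=2$ (citing Cho--Koh--Lee, who obtained exactly the whole subcritical region and no more). The paper instead establishes a \emph{bilinear} space-time estimate carrying an explicit high-frequency gain $N^{1/p-1/r}$ for $\hat f_i\subset\mathbb B^n(Ne_1,1)$ (via Theorem~2.2 and the bilinear tangent Proposition~2.3 on tilted parallelepipeds $P_R$), performs $\epsilon$-removal on the \emph{bilinear} estimate (Appendix~A), and feeds the result into the Lee--Rogers--Vargas bilinear-to-linear mechanism (Proposition~5.1), where the rescaled gain $2^{j(\frac{n+1}{p}+\frac1r-\frac n2)}$ is exactly what makes the dyadic frequency summation close on the critical line. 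Your sketch mentions ``maximal estimates for high-frequency data'' as a difficulty, but the $N$-dependence never actually enters the argument, and without it the scheme collapses to the already-known subcritical result.

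A secondary inaccuracy: for $n\ge3$ the paper does not invoke a Bennett--Carbery--Tao $k$-linear estimate. It works with a $2$-broad norm $BL^{p,q}_A$ adapted to the parallelepipeds $P_R$, an $L^2_xL^\infty_t$ tangent-to-variety bound obtained from the Du--Guth--Li--Zhang refined Strichartz estimate, and the Hickman--Rogers/Guth two-algorithm polynomial partitioning; the broad term remains bilinear in nature, and the loss comes from the dimensional recursion rather than from a genuinely $k$-linear input.
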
 
Both inequalities \eqref{Strichartz} and \eqref{S_{p,r}} are the anisotropic generalization of the adjoint estimate of the Fourier restriction estimate. They satisfy some common symmetries such as translation invariance and scaling invariance. The former allows us to do translations over $x$ and $t$, and the latter predicts the relation $s=\frac{n}{2}-\frac{n}{p}-\frac{2}{r}$. The necessary conditions for the index pairs $(p,r)$ can be verified by Knapp's example.  

The estimates \eqref{Strichartz} and \eqref{S_{p,r}} behave differently under Galilean transforms.  The Strichartz estimate \eqref{Strichartz} is Galilean invariant. By which we can make translation in the frequency space.  While the space-time estimate \eqref{S_{p,r}} is not Galilean invariant which makes space-time estimate more difficult. And the estimate would depend on both the size and the position of the support of $\hat{f}$.

\subsection{Maximal estimates} The space-time estimate and its variant has close relation with several important problems in  harmonic analysis. One of the endpoint index  pair $(\frac{2(n+1)}{n},\infty)$ of its critical case appears naturally in the study of the well-known Carleson's problem. Here and after, the index pair $(p,r)$ is critical if $\frac{n+1}{p}+\frac{1}{r}=\frac{n}{2}$. And the pair in the set \{$(p,r):\frac{n+1}{p}+\frac{1}{r}<\frac{n}{2}$\} will be called subcritical. In  \cite{C79} Carleson formulated the problem to find the the infimum regularity $s_c$ such that
$$\lim_{t\to0}e^{it\Delta}f(x)=f(x)\quad a.e.\,x\in\mathbb R^n,\,\forall f\in H^s(\mathbb R^n).$$
In the last few decades, there were abundant literature such as \cite{l06,clv12,dgl17,dglz18,dz18,lr19} on this problem. So far, Carleson's problem is well understood, and the critical index is  $s_c=\frac{n}{2(n+1)}$ for $n\geq1$. The one dimensional case follows from the results of Carleson \cite{C79} and Dahlberg and Kenig \cite{dk82}. For the high dimensional case, the necessary part follows from the counterexample formulated by Bourgain in \cite{b16}. To achieve the positive result, Du, Guth, and Li \cite{dgl17} set up the following $\epsilon$ lost estimate
\begin{equation}\label{DGL}
		\|e^{it\Delta}f\|_{L_x^3L_t^{\infty}(B^3(0,R))}\lesssim_{\epsilon} R^{\epsilon}\|f\|_{L^2},
	\end{equation}
	for $\text{supp} \hat{f}\subset \mathbb{B}^2(0,1)$ when $n=2$. 
Motivated by the two dimensional case result,  one may expect to set up the $\epsilon$ lost estimate 
\begin{equation}\label{local maximal estimate}
	\|e^{it\Delta}f(x)\|_{L_x^{\frac{2(n+1)}{n}}L^\infty_t(\mathbb{B}^{n+1}(0,R))}\lesssim_\epsilon R^\epsilon \|f\|_{L^2(\mathbb R^n)}
\end{equation}
for $\text{supp} \hat{f}\subset \mathbb{B}^n(0,1)$. Which corresponding to endpoint index of Conjecture \ref{c1.1}. With \eqref{local maximal estimate}, combining the $\epsilon$ removal argument in \cite{ckl22}, one can obtain \eqref{S_{p,r}} for the whole subcritical case.  

 While Du, Kim, Wang, and Zhang \cite{dkwz19} showed that the estimate \eqref{local maximal estimate} fails for $n\geq3$. As a replacement, Du and Zhang \cite{dz18} proved
\begin{equation}\label{e1.1}
	\|\sup\limits_{0<t\leq R}|e^{it\Delta}f|\|_{L^{2}(\mathbb{B}^{n}(0,R))}\le R^{\frac{n}{2(n+1)}+\epsilon}\|f\|_{L^{2}(\mathbb{R}^{n})},\ \ \ \ \forall \epsilon>0,
\end{equation}
for $\text{supp} \hat{f}\subset \mathbb{B}^n(0,1)$.  It is a meaningful problem to find the sharp range of $p$ such that 
\begin{equation}\label{e1.5}
	\|\sup_{0<t\leq R}|e^{it\Delta}f(x)|\|_{L^{p}(\mathbb{B}^n(0,R))}\lesssim_\epsilon R^\epsilon \|f\|_{L^2(\mathbb R^n)}
\end{equation}
for $\text{supp} \hat{f}\subset \mathbb{B}^n(0,1)$. Recently,  Wu \cite{w20}, Cao, Miao, and Wang \cite{cmw21} have maken great improvement on this problem. 

Since the local estimate \eqref{local maximal estimate} does not hold therefore the corresponding global estimate cannot hold either. Thus, Conjecture \ref{c1.1} becomes a meaningful replacement of the maximal estimate. To achieve this aim, we set up several maximal estimates with high frequency input functions. Which also reflect the fact that the frequency of the input function affects the space-time estimates. 
 Given $z_0=(x_0,t_0)$, $N\geq1$ and a usually very large number $R\geq1$, $Q_R(x_0)\subset\mathbb R^n$ denotes a cube centered at $x_0$ with side length $R$.  A parallelepiped with direction $(4\pi Ne_1,1)$ and center $z_0$ is 
 $$P_R(z_0):=\{(x,t)\in \mathbb{R}^n\times [-R+t_0,R+t_0]:x-4\pi tNe_1\in Q_R(x_0)\},$$ here $e_1=(1,0,\dots,0)$. The parameter $R$ will be called as its size of $P_R(z_0)$. Let $P_R=P_R(z_0)$ with $z_0=(0,0)$. We now state our main results in this part.

\begin{theorem}[High frequency input endpoint estimate for $n=2$]\label{t2.1}
	Let $\hat{f}$ be supported in $ \mathbb{B}^2(Ne_1,1)$ with $N\geq 1$. For any $\epsilon>0$, we have
	
	\begin{equation}\label{2D endpoint}
		\|e^{it\Delta}f\|_{L_x^3L_t^{\infty}(P_R)}\lesssim_{\epsilon} N^{\frac{1}{3}}R^{\epsilon}||f||_{L^2}.
	\end{equation}
\end{theorem}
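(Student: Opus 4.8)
The plan is to peel off a single power $N^{1/3}$ by Galilean normalization together with a change of variables, reducing \eqref{2D endpoint} to a Du--Guth--Li-type local maximal estimate for data with frequency support in the unit ball; the latter is where the real work lies. \textbf{Step 1 (Galilean normalization).} Write $\hat f(\xi)=\hat g(\xi-Ne_1)$, so that $\hat g$ is supported in $\mathbb{B}^2(0,1)$. A direct computation gives $e^{it\Delta}f(x)=e^{2\pi i(Nx_1-2\pi N^2t)}\,(e^{it\Delta}g)(x-4\pi tNe_1)$, hence $|e^{it\Delta}f(x)|=|(e^{it\Delta}g)(x-4\pi tNe_1)|$, and the shear $(x,t)\mapsto(x-4\pi tNe_1,t)$ carries $P_R$ onto $Q_R(0)\times[-R,R]$. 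When $N=1$ one is already done, because $P_R\subset B^3(0,C_0R)$, so for every $x$ the column $\{t:(x,t)\in P_R\}$ sits inside $\{t:(x,t)\in B^3(0,C_0R)\}$, and \eqref{DGL} (after the harmless parabolic rescaling $\mathbb{B}^2(0,2)\to\mathbb{B}^2(0,1)$) yields \eqref{2D endpoint}.

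\textbf{Step 2 (transporting the mixed norm).} For $N\ge1$ the mixed norm still has to be chased through the shear. Fixing $x$ and substituting $w=x_1-4\pi tN$ inside the $L^\infty_t$-supremum, then $x_1=4\pi Ns$ inside the $L^3_x$-integral (and enlarging the supremum to $|w|\le R/2$), one finds that \eqref{2D endpoint} is implied by the inequality, with $\lambda:=1/(4\pi N)\in(0,1/(4\pi)]$,
\[
\int_{|x_2|\le R/2}\int_{|s|\lesssim R}\Big(\sup_{|w|\le R/2}\big|(e^{i(s-\lambda w)\Delta}g)(w,x_2)\big|\Big)^{3}\,ds\,dx_2\ \lesssim_\epsilon\ R^{3\epsilon}\|g\|_2^{3}.
\]
The Jacobian $4\pi N$ of $x_1=4\pi Ns$ is precisely what generates the factor $N^{1/3}$ after taking cube roots, and $\|g\|_2=\|f\|_2$. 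Equivalently, if $v$ denotes the extension operator of the surface $\{\,(\xi_1+2\pi\lambda|\xi|^2,\ \xi_2,\ -2\pi|\xi|^2):\xi\in\mathbb{B}^2(0,1)\,\}$, a uniformly curved perturbation of the paraboloid, then it suffices to prove $\|v\|_{L^3_{(y_2,s)}L^\infty_{y_1}(B^3(0,CR))}\lesssim_\epsilon R^\epsilon\|g\|_2$, uniformly in $\lambda$.

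\textbf{Step 3 (the reduced estimate).} I would establish this by rerunning the Du--Guth--Li machinery: decompose $g$ into wave packets at scale $R^{-1/2}$; apply polynomial partitioning to split into a broad part and a narrow part; bound the broad part via a bilinear (multilinear) restriction estimate, the transversality coming from the uniform non-degenerate curvature of the perturbed paraboloid, together with the broad-norm bookkeeping; handle the narrow/tangential part by induction on the scale via parabolic rescaling and a refined Strichartz estimate; and finally pass from the bilinear to the linear statement. The single Knapp cap already saturates this inequality up to $R^\epsilon$, so no further power of $R$ is available, which is consistent with the target.

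\textbf{Main obstacle.} The crux is that, after normalization, the supremum in \eqref{2D endpoint} is taken along lines parallel to $y_1$, a \emph{tangent} direction to the (perturbed) paraboloid at its centre, not the normal direction handled in \eqref{DGL}; hence \eqref{DGL} cannot be invoked directly, and one must check that the polynomial-partitioning induction closes for this particular mixed norm --- that the lower-dimensional and narrow contributions remain benign, and that the parabolic rescaling in the narrow case is compatible with $L^3_{(y_2,s)}L^\infty_{y_1}$ --- with all implied constants uniform in $\lambda\in(0,1/(4\pi)]$, i.e.\ in $N\ge1$, the endpoint $\lambda=1/(4\pi)$ matching the $N=1$ case of Step 1.
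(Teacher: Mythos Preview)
Your Steps 1--2 are a correct and clean reduction: the Galilean shear does carry $P_R$ to a cube, the Jacobian does produce exactly $N^{1/3}$, and the remaining task is an $L^3_{(y_2,s)}L^\infty_{y_1}$ estimate for the extension of a uniformly curved surface.  But Step~3 is not a proof --- it is the entire theorem restated in new coordinates, and your ``Main obstacle'' paragraph is precisely right: the Du--Guth--Li induction hinges on the narrow (concentrated-cap) case, which in their setting closes via parabolic rescaling.  Under parabolic rescaling centred at a cap $\tau$ around $\xi_0\neq 0$, the fixed sup-direction $y_1$ changes its angle with the surface normal, so the reduced problem at a subcap is \emph{not} of the same form as the original; the induction does not close as written, and you give no mechanism to repair it.

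The paper avoids this obstruction entirely by \emph{not} normalizing first.  It proves a two-parameter statement (Theorem~\ref{t2.2}) depending on both the spatial scale $R$ and a frequency-radius parameter $M$ (so $\operatorname{supp}\hat f\subset \mathbb{B}^2(\xi_0,1/M)\subset\mathbb{B}^2(Ne_1,1)$), with a gain $M^{-\epsilon^2}$.  Polynomial partitioning is then run directly in the parallelepiped $P_R$ with the high-frequency data.  The cellular and transverse pieces close by induction on $R$; the narrow piece closes by induction on $M$ --- one simply passes to a smaller cap of radius $(KM)^{-1}$ \emph{without} rescaling, so the mixed norm $L^p_xL^r_t$ and the sup direction never move relative to the surface.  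The Galilean transform is invoked only at the very end, inside the bilinear tangent estimate (Proposition~\ref{p2.3}): there the $L^3_xL^\infty_t$ norm is first converted to an isotropic $L^6_{x,t}$ norm via dyadic pigeonholing and the locally constant property at the anisotropic scale $M\times M\times M/N$ --- this step is where the factor $N^{1/3}$ actually emerges --- and only then is each $R^{1/2}$-parallelepiped sheared back to a cube so that the bilinear refined Strichartz estimate of Du--Guth--Li (their Proposition~8.2, an $L^6$ statement insensitive to the sup direction) can be applied.  Postponing the normalization to the $L^6$ stage, rather than carrying a tangent-direction maximal norm through the whole induction, is the device that makes the argument close.
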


\begin{theorem}[High frequency input estimate for $n\geq3$]\label{t4.6}
	Let $\hat{f}$ be supported in $ \mathbb{B}^n(Ne_1,1)$ with $N\geq 1$. For any $\epsilon>0$, we have
	
	\begin{equation}\label{n-endpoint}
		\|e^{it\Delta}f\|_{L_x^pL_t^{\infty}(P_R)}\lesssim_{\epsilon} N^{\frac{1}{p}}R^{\epsilon}||f||_{L^2},
	\end{equation}
	whenever $$p\geq \tilde{p}_n=2+\frac{4}{n+1+s_2^n}  \ \text{with}\ \  s_2^n=\sum_{i=2}^{n}\frac1i.$$
\end{theorem}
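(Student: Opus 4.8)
The plan is to prove Theorem~\ref{t4.6} by a double induction --- on the dimension $n$ and on the scale $R$ --- organised around Guth's polynomial-partitioning method together with a Bourgain--Guth broad--narrow decomposition, in the spirit of Du--Guth--Li \cite{dgl17} and Du--Zhang \cite{dz18}. First I would dispose of the frequency parameter $N$: the set $P_R$ is a tube of length $\sim NR$ about its axis, so it is covered by $\sim N$ space--time balls $\mathbb{B}^{n+1}(z,R)$ centred along that axis, and since $\sup_t(\cdot)^p\le\sum_j\sup_{t\in(\text{ball }j)}(\cdot)^p$, the theorem reduces to the $N$-uniform core estimate $\|e^{it\Delta}f\|_{L^p_xL^\infty_t(P_R\cap\mathbb{B}^{n+1}(z,R))}\lesssim_\epsilon R^\epsilon\|f\|_2$ on a single $R$-ball, the factor $N^{1/p}$ being exactly the covering multiplicity --- and sharp, since a single wave packet already saturates it at $p=2(n+1)/n$. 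It is here that $N\ge1$ genuinely enters: on such a ball the $t$-fibre of $P_R$ over a fixed $x$ has length only $\sim R/N$, which is what defuses the Du--Kim--Wang--Zhang obstruction \cite{dkwz19} to the unrestricted estimate \eqref{local maximal estimate}. After a Galilean substitution $\hat g(\xi):=\hat f(\xi+Ne_1)$ I may assume $\supp\hat g\subset\mathbb{B}^n(0,1)$, and then I induct on $R$ for this core estimate.

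On an $R$-ball I would decompose $g$ into wave packets at scale $R^{1/2}$ --- the usual space--time tubes of dimensions $R^{1/2}\times\cdots\times R^{1/2}\times R$ --- and introduce Guth's $k$-broad norm $\|e^{it\Delta}g\|_{BL^{p}_{k,A}}$, which retains only the part of $|e^{it\Delta}g|^{p}$ not concentrated in thin neighbourhoods of $(k-1)$-planes spanned by wave-packet directions. The Bourgain--Guth dichotomy then yields, ball by ball, $\|e^{it\Delta}g\|_{L^p}\lesssim\|e^{it\Delta}g\|_{BL^{p}_{k,A}}+(\text{narrow terms})$, where each narrow term is trapped near a $(k-1)$-plane and, after an affine rescaling, becomes a $(k-1)$-dimensional Schr\"odinger space--time maximal expression, to which the $(k-1)$-dimensional instance of the theorem is applied; with the choice $k=n$ each application drops the dimension by one and the recursion bottoms out at the one-dimensional space--time maximal estimate (the $n=1$ endpoint of Conjecture~\ref{c1.1}), which is the base case. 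At the two-dimensional level this descent is a bilinear-to-linear reduction of the type behind \eqref{DGL}.

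To bound the broad norm $\|e^{it\Delta}g\|_{BL^{p}_{k,A}}$ I would run Guth's partitioning iteration: choose a polynomial of degree $O_\epsilon(1)$ equidistributing the broad mass, so that the cellular cells are handled by the inductive hypothesis at radius $\lesssim R$, while the contribution near the zero variety splits into a transverse and a tangential piece. The transverse piece is controlled by the multilinear ($k$-linear) restriction/Kakeya inequality of Bennett--Carbery--Tao together with the refined Strichartz estimate --- a localized $L^2$ bound that controls $e^{it\Delta}g$ on a union of $R^{1/2}$-cubes by the number of cubes times $\|g\|_2$, as in \cite{dgl17,dz18} --- and this is the only place the $L^2$ input enters; the tangential piece is absorbed by the induction on $R$ after a further drop in effective dimension. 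Iterating the partitioning $O(\log\log R)$ times closes the broad estimate.

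Assembling these steps, the $n$-dimensional core estimate at an exponent $p$ is reduced to the $(n-1)$-dimensional one at $\tilde p_{n-1}$ together with the broad estimate; balancing the exponents --- the transverse (refined-Strichartz) step against the narrow and tangential steps, and optimising the auxiliary parameters $k$, $A$ and the partitioning degree --- forces the recursion $\tilde p_n=2+\tfrac{4}{\,n+1+s_2^n\,}$ with $s_2^n=s_2^{n-1}+\tfrac1n$, whence, from $s_2^1=0$ and $\tilde p_1=4$, the claimed threshold with $s_2^n=\sum_{i=2}^{n}\tfrac1i$; undoing the first reduction restores the $N^{1/p}$. The principal obstacle I anticipate is running this entire partitioning/broad-norm apparatus for the \emph{mixed} norm $L^p_xL^\infty_t$ rather than an honest $L^p(\rr^{n+1})$ norm: the supremum in $t$ does not split over the partition cells, so the wave-packet localisation, the cellular/transverse/tangential splitting, the $k$-linear reduction and the induction on $R$ must all be arranged to preserve the thin ($R/N$-height) slab geometry at every stage. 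The secondary difficulty is propagating the lower-dimensional deficits through the recursion cleanly enough that the harmonic-sum threshold $\tilde p_n$ emerges exactly, with no additional $\epsilon$-loss.
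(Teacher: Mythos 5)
Your high-level plan (polynomial partitioning, broad--narrow decomposition, refined Strichartz, induction on $R$) is in the right family, and your observation that the $t$-fibre of $P_R$ has length $\sim R/N$ --- which is what makes the $\epsilon$-loss estimate consistent with the Du--Kim--Wang--Zhang counterexample --- is exactly the right intuition for why the $N$-dependence is as stated. However, the specific mechanism you propose diverges from the paper's proof in a way that opens a genuine gap. You take a $k$-broad norm with $k=n$ and plan to handle each narrow term (caps near an $(n-1)$-plane) by an affine/Lorentz rescaling that turns it into a $(k-1)$-dimensional instance of the theorem, inducting on $n$ down to the one-dimensional base case. The paper does \emph{not} induct on dimension at all. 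It uses the $2$-broad norm $BL_A^{p,q}$, in which the excluded $V_a$ are \emph{lines} in $\mathbb{R}^{n+1}$: near a line only $O(1)$ caps $\tau$ contribute, so the narrow term is essentially a single $\tau$ at a finer frequency scale, and it is closed by induction on the frequency radius $1/M$ (a Du--Zhang-type step formalised as Theorem~\ref{t2.2x} $\Rightarrow$ Theorem~\ref{t2.3x} in Section 3). The dimensional descent you want happens instead \emph{inside} the polynomial-partitioning apparatus, via the two nested Hickman--Rogers/Guth algorithms [alg 1] (scale descent) and [alg 2] (variety-dimension descent from $m=n+1$ down to $m=2$), with the key analytic input being the $BL_A^{2,q}$ estimate for $m$-varieties (Theorem~\ref{c3.1}), itself obtained from the linear refined Strichartz estimate of \cite{dglz18} --- not from Bennett--Carbery--Tao multilinear restriction. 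The exponent $\tilde p_n$ falls out of the $X_i\le 0$, $Y_{i-1}\le 0$ bookkeeping in the terminal stage of [alg 2], not from a top-level $k$-broad optimisation.

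The gap in your proposal is precisely the obstacle you flag at the end and do not resolve: a Bourgain--Guth narrow-term Lorentz rescaling mixes the $x$- and $t$-variables, so it does not preserve the mixed norm $L^p_xL^\infty_t$, nor the thin $R/N$-height slab geometry of $P_R$; this is why the approach breaks once you try to make ``after an affine rescaling, becomes a $(k-1)$-dimensional Schr\"odinger space--time maximal expression'' precise. The paper's choice of a $2$-broad norm with narrow terms handled by frequency-radius induction (no rescaling of the physical domain) is exactly what sidesteps this. Relatedly, your stated base case --- the one-dimensional instance of Conjecture~\ref{c1.1} --- is never invoked in the paper's $n\geq 3$ argument, and the paper's $n=2$ theorem is proved separately by a different (bilinear/polynomial-partitioning) route in Section 2 and is not used as a rung in a ladder. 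To turn your sketch into a proof you would either have to show that the $k$-broad Lorentz step can be made compatible with $L^p_xL^\infty_t$ on slabs (which I do not see how to do), or adopt the paper's $2$-broad plus two-algorithm scheme, in which case the ``induction on $n$'' should be replaced by the internal variety-dimension recursion of [alg 2].
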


 \begin{remark}The  propagation speed of the Schr\"odinger wave $e^{it\Delta}f$ depends on the location of $\hat{f}$.  In this case, the Schr\"odinger wave $e^{it\Delta}f$ roughly propagates at speed $N$ in the direction $(4\pi Ne_1,1)$. Then for $|t|\leq R$, the propagation region of the wave is essentially concentrated in $P_R$. Since there is no Galilean invariant, the estimate with different frequency input is inevitable.
 \end{remark}



\subsection{Global space-time estimates}
We  turn back  to Conjecture \ref{c1.1}. For $n=1$, it was proved by Kenig, Ponce, and Vega \cite{kpv91}. For $n\geq 2$,  Vega \cite{v88} set up \eqref{S_{p,r}}  for $(p,r)=(\frac{2(n+1)}{n-1},2)$ by Plancherel's theorem (in time) and the $n-1$ dimensional Stein-Tomas restriction theorem. Combining his result with the $n$-dimensional Stein-Tomas restriction theorem, one can verify the estimate \eqref{S_{p,r}} for $p\geq \frac{2(n+2)}{n}$ in the critical case. It remains to verify Conjecture \ref{c1.1} when $\frac{2(n+1)}{n}\leq p\leq \frac{2(n+2)}{n}$.  Combining Tao's sharp bilinear restriction estimates for the paraboloid \cite{t03}  with a bilinear-to-linear argument of  Planchon \cite{p02}, one can obtain \eqref{S_{p,r}} in the subcritical region and $p>\frac{2(n+3)}{n+1}$. This result was latter extended to the critical case by Lee, Rogers, and Vargas \cite{lrv11}. When $n=2$, there are further results that breakthrough the index $\frac{2(n+3)}{(n+1)}=\frac{10}{3}$. Rogers \cite{r09} obtained \eqref{S_{p,r}} for $p>\frac{16}{5}$ in the subcritical case.  Lee, Rogers, and Vargas \cite{lrv11} extend it to the critical case. Very recently, Cho, Koh, and Lee \cite{ckl22} set up \eqref{S_{p,r}} in the whole subcritical region.
With theorems \ref{t2.1} and \ref{t4.6} at hand, we can obtain the following space-time estimates in the critical case.

\begin{theorem}\label{t1.1}
	Let  $2\leq r<\infty$ and $\frac{n+1}{p}+\frac{1}{r}=\frac n2$. Then
\begin{itemize}
	\item[\rm{(i)}] when $n=2$, \eqref{S_{p,r}} holds for $p>3$;
	\item[\rm{(ii)}] when $n\geq 3$, \eqref{S_{p,r}} holds  for $p> p_n$, where
	$$p_n=\frac{2(n+1)[(n+2)\tilde{p}_n-2(n+3)]}{(n+1)n\tilde{p}_n-2(n^2+2n-1)},$$
with the same $\tilde{p}_n$ as in Theorem \ref{t4.6}.
\end{itemize}
\end{theorem}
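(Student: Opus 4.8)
The plan is to derive Theorem~\ref{t1.1} from the high-frequency maximal estimates of Theorems~\ref{t2.1} and~\ref{t4.6} by a two-step reduction: first a Littlewood--Paley decomposition to localize the frequency of $f$ to a single dyadic annulus $|\xi|\sim\lambda$, and then, on each annulus, a further decomposition into unit balls $\mathbb{B}^n(\nu,1)$ with $\nu$ ranging over a $1$-separated set, so that on each such ball the input data has the form covered by the maximal estimates with $N\sim\lambda$. The $\dot{H}^s$ scaling relation $s=\frac n2-\frac np-\frac2r$ will be used to convert the $R^\epsilon$ and $N^{1/p}$ gains into an honest power of $\lambda$ that sums over the dyadic annuli. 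Throughout, one works with $f$ whose Fourier support lies in a fixed dyadic shell, rescales by $\lambda$ to reduce to unit-frequency scale, and then tracks how the parallelepiped $P_R$ deforms under the Galilean boost that moves the unit ball $\mathbb{B}^n(\nu,1)$ to $\mathbb{B}^n(Ne_1,1)$.

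First I would set up the local-to-global passage. Fix a dyadic $\lambda$ and $f$ with $\supp\hat f\subset\{|\xi|\sim\lambda\}$. After parabolic rescaling $(x,t)\mapsto(\lambda x,\lambda^2 t)$ the Fourier support becomes $\{|\xi|\sim1\}$, which we cover by $O(1)$ unit balls; a Galilean transform sends each to $\mathbb{B}^n(Ne_1,1)$ with $N\sim1$ after rescaling, hence $N\sim\lambda$ before, and the ball $\mathbb{B}^{n+1}(0,R)$ on which we would like an estimate is contained in a parallelepiped $P_{R'}$ of the type in Theorems~\ref{t2.1}/\ref{t4.6} with $R'\sim R$ (the tilt in the $t$-direction is by the drift velocity $\sim N$, matching the definition of $P_R(z_0)$). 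Applying \eqref{2D endpoint} (resp. \eqref{n-endpoint}) on each piece and summing the $O(1)$ pieces yields, for the rescaled data, a bound $\lesssim_\epsilon N^{1/p}R^\epsilon\|f\|_2$ on $L^p_xL^\infty_t(\mathbb{B}^{n+1}(0,R))$; undoing the rescaling restores the $\dot H^s$ norm and the power of $\lambda$. Combined with the $\epsilon$-removal machinery of \cite{ckl22} (cited in the excerpt as the mechanism converting such local $R^\epsilon$ maximal bounds into global estimates), this produces a global $L^p_xL^\infty_t$ bound $\|e^{it\Delta}P_\lambda f\|_{L^p_xL^\infty_t}\lesssim\lambda^{s}\|f\|_2$ for the range of $p$ permitted by the maximal estimate, i.e. $p>3$ when $n=2$ and $p\geq\tilde p_n$ when $n\geq3$.

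Next I would interpolate against Vega's endpoint $L^p_xL^2_t$ estimate \eqref{S_{p,r}} at $(p,r)=(\frac{2(n+1)}{n-1},2)$ to fill out the full critical line $2\leq r<\infty$. Concretely, the $L^\infty_t$ maximal bound sits at one end ($r=\infty$) and Vega's $r=2$ bound at the other; complex interpolation in the mixed-norm spaces $L^p_xL^r_t$ along the critical line $\frac{n+1}{p}+\frac1r=\frac n2$ interpolates both the exponents and the Sobolev regularity correctly because $s=\frac n2-\frac np-\frac2r$ is affine in $(1/p,1/r)$. For $n=2$ this immediately gives part (i): the $r=\infty$ input needs $p>3$, Vega's input is at $p=6$, and interpolation covers $3<p\leq6$; for $p>6$ one uses the $n$-dimensional Stein--Tomas argument mentioned in the excerpt. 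For $n\geq3$, part (ii) is exactly the interpolation endpoint computation: feeding $(p,r)=(\tilde p_n,\infty)$ and $(p,r)=(\frac{2(n+1)}{n-1},2)$ into the critical-line interpolation and solving for where the interpolated exponent $p$ bottoms out gives precisely the displayed formula for $p_n$, and the estimate holds for all $p>p_n$ on the critical line.

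The main obstacle I expect is the first reduction, specifically the bookkeeping that the Fourier-support-in-a-ball hypothesis of Theorems~\ref{t2.1}/\ref{t4.6} really does cover what a Littlewood--Paley piece contributes, with the tilted parallelepiped $P_R$ (rather than a ball) being the correct local region. One must check that the Galilean boost aligning a unit frequency ball with $Ne_1$ conjugates the Euclidean ball $\mathbb{B}^{n+1}(0,R)$ into exactly a $P_{R}(z_0)$, that the $O(1)$-fold overlap in the covering of the shell by unit balls does not degrade the constant, and — more delicately — that the $\epsilon$-removal argument of \cite{ckl22} applies verbatim in the presence of the $N^{1/p}$ prefactor and the tilted region, i.e. that the prefactor is exactly what the scaling demands so that it is absorbed rather than accumulated. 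Once this local-to-global step is in place, the interpolation and the extraction of $p_n$ are routine algebra along the critical line.
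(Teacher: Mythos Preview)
Your approach has a genuine gap that prevents it from reaching the critical line. For $n\geq 3$, the maximal point $(\tilde p_n,\infty)$ is \emph{strictly subcritical}: on the critical line $\frac{n+1}{p}+\frac1r=\frac n2$ the $r=\infty$ endpoint is $p=\frac{2(n+1)}{n}$, and $\tilde p_n>\frac{2(n+1)}{n}$. Interpolating a strictly subcritical point with Vega's critical point $(\frac{2(n+1)}{n-1},2)$ yields a segment that meets the critical line only at Vega's endpoint, so you recover nothing new there. In particular, the displayed formula for $p_n$ cannot arise from this interpolation; it comes instead from interpolating $(\tilde p_n,\infty)$ with Tao's bilinear point $p=r=\frac{2(n+3)}{n+1}$ and intersecting with the critical line. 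The same issue occurs for $n=2$: although $(3,\infty)$ lies on the critical line, the $R^\epsilon$ loss forces $p>3$ after any $\epsilon$-removal, and the resulting points are again subcritical --- this is exactly the content of Remark~\ref{re1.4}+1 (the local maximal estimate of \cite{dgl17} gives only the subcritical range, cf.\ \cite{ckl22}).

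The paper avoids this obstruction by working \emph{bilinearly} throughout. The maximal estimates of Theorems~\ref{t2.1}/\ref{t4.6} are used to produce bilinear $L^p_xL^r_t$ bounds with separated frequency supports, these are interpolated with Tao's sharp bilinear restriction estimate \cite{t03} (not Vega's linear estimate), an $\epsilon$-removal for the bilinear mixed-norm is applied, and then the key Proposition~\ref{p1.2} of Lee--Rogers--Vargas \cite{lrv11} converts bilinear estimates valid in a \emph{neighborhood} of a critical pair into the linear critical estimate. It is precisely this bilinear-to-linear machinery that allows one to land on the critical line despite the $\epsilon$-losses; a purely linear interpolation cannot do this.
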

\begin{remark}\label{re1.4}
	By this theorem, we confirm Conjecture \ref{c1.1} when $n=2$ (see Figure 1). For $n\geq 3$, we improve the  results obtained Lee, Rogers, and Vargas \cite{lrv11} (see the green part of Figure 2). In Table \ref{tab1}, we list several values of $p_n$, the conjectured sharp index $p_s$, and $p_{lrv}$ obtained by Lee, Rogers, and Vargas \cite{lrv11}.

\end{remark}
\begin{remark}
	The local maximal estimate \eqref{DGL} is not enough to obtain critical estimate \eqref{S_{p,r}} for $n=2$ (see Cho, Koh, and Lee \cite{ckl22}).  
	\end{remark}

\begin{figure}
	\begin{minipage}[t]{0.5\linewidth}\label{10}
		\begin{tikzpicture}
			\draw[->,thick] (0,0) -- (5.2,0) node[right] {$\frac{1}{p}$};
			\draw[->,thick] (0,0) -- (0,5.2) node[above] {$\frac{1}{r}$}; 
			\draw (0,0) node[anchor=north east] {O};
			\draw (3.33,0) circle (.1);
			\filldraw[red,line width=1] (0,0) -- (0,5) -- (1.67,5) -- (3.33,0);
			\draw (1.67,5) -- (3.33,0);
			\draw (1.675,5) - - (1.675,0);
			\draw (1.675,0) node[below] {$\frac16$};
			\draw (3.33,0) node[below] {$\frac13$};
			\draw (5,0) node[below] {$\frac 12$};
			\draw (0,5) node[left] {$\frac 12$};
		\end{tikzpicture}
		\caption{$n=2$}
	\end{minipage}%
	\begin{minipage}[t]{0.5\linewidth}\label{11}
		\begin{tikzpicture}
			\draw[->,thick] (0,0) -- (5.2,0) node[right] {$\frac{1}{p}$};
			\draw[->,thick] (0,0) -- (0,5.2) node[above] {$\frac{1}{r}$};
			\draw (0,0) node[anchor=north east] {O}; 
			\filldraw[red,line width=1] (0,0) -- (0,5) -- (2.5,5) -- (3.3,1.8)-- (3.3,0);
			\filldraw[green] (3.3,1.8) --(3.57,0.7)--(3.57,0)--(3.3,0);
			\draw (2.5,5) -- (3.75,0);
			\draw [black,thick, dotted] (3.57,0.7)-- (3.57,0);
			\draw [black,thick, dotted] (3.3,1.8)-- (3.3,0);
			\draw (3.57,0.7) circle (.1);
			\draw (2.5,5) -- (2.5,0);
			\draw (2.5,0) node[below] {$\frac{n-2}{2n}$};
			\draw (3.1,0) node[below] {$\frac{1}{p_{lrv}}$};			
			\draw (3.57,0) node[below] {$\frac{1}{p_n}$};
\draw (4,0) node[below] {$\frac{1}{p_s}$};
			\draw (5,0) node[below] {$\frac12$};
			\draw (0,5) node[left] {$\frac12$};
		\end{tikzpicture}
		\caption{$n\geq3$}
	\end{minipage}
	
\end{figure}

\begin{table}[h!]
	\caption{}\label{tab1}
	
	\begin{tabular}{r|>{}c>{}r>{}c>{}c>{}r>{}c>{}c>{}r>{}c>{}c}
		$n=$    & 2 & 3 & 4 & 5 & 6 & 7 & 8 & 9 & 10 & 11\\
		\hline
		$p_{\text{s}}=$         & 3 & $2+\frac23$ & $2+\frac12$ & $2+\frac25$ & $2+\frac 13$ & $2+\frac27$ & $2+\frac14$ & $2+\frac29$ & $2+\frac 15$ & $2+\frac{2}{11}$\\
		\hline
		$p_n\sim$      & 3 & $2+\frac{44}{50}$ & $2+\frac {7}{10}$ & $2+\frac35$ & $2+\frac12$ & $2+\frac{47}{100}$ & $2+\frac25$ & $2+\frac{38}{100}$ & $2+\frac{7}{20}$ & $2+\frac{3}{10}$\\
		
		\hline
		$p_{lrv}=$      & $3+\frac15$ & $3$ & $2+\frac 45$ & $2+\frac23$ & $2+\frac47$ & $2+\frac12$ & $2+\frac49$ & $2+\frac25$ & $2+\frac{4}{11}$ & $2+\frac13 $\\

	\end{tabular}
	
\end{table}

\subsection{Local space-time estimates}
Beside the global in time estimates as above, the local in time estimates are also interesting. The following   conjecture formulated by Lee, Rogers, and Seeger \cite{lrs13} involving the Stein's square function estimate for the Bochner-Riesz mean and also the adjoint estimate of the Fourier restriction estimate.
\begin{conjecture}\label{c1.4}
	Let $2\leq p<\infty$, $2\leq r\leq\infty$ satisfy $\frac np+\frac1r<\frac n2$ and $\frac{2n+1}{p}+\frac1r<n$. Then, 
	\begin{equation}\label{ls2}
		\|e^{it\Delta}f\|_{L_x^pL_t^{r}(\mathbb{R}^n\times[0,1])}\lesssim \|f\|_{B_{\alpha,p}^p(\mathbb{R}^n)}			
	\end{equation}
	hold for $\alpha=n(1-\frac 2p)-\frac2r$. Here $B_{\alpha,p}^p$ denotes the non-homogeneous Besov space with the norm $$\|f\|_{B_{\alpha,p}^p}=(\sum_{k\geq0}2^{k\alpha p}\|P_kf\|_{p}^p )^{1/p}.$$ 
\end{conjecture}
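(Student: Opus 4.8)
The plan is to reduce Conjecture~\ref{c1.4} to a single frequency scale and then feed in the high-frequency maximal estimates of Theorems~\ref{t2.1} and~\ref{t4.6}. First I would take a Littlewood--Paley decomposition $f=\sum_{k\ge 0}P_kf$. Since the target is the $\ell^p$-summed Besov space $B^p_{\alpha,p}$ with $2\le p<\infty$ and $r\ge 2$, the embedding $B^p_{0,2}\hookrightarrow L^p$ (applied in the mixed norm, using that the space-time Fourier supports of $e^{it\Delta}P_kf$ lie over the dyadic pieces $\{|\xi|\sim 2^k\}$ of the paraboloid and are therefore essentially disjoint) together with Hölder's inequality in $k$ reduces matters to a single-scale estimate
\begin{equation*}
\|e^{it\Delta}f\|_{L^p_xL^r_t(\mathbb{R}^n\times[0,1])}\lesssim 2^{k(\alpha-\delta)}\|f\|_{L^p},\qquad \operatorname{supp}\hat f\subset\{|\xi|\sim 2^k\},
\end{equation*}
for \emph{some} $\delta=\delta(p,r)>0$. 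The two \emph{strict} inequalities $\frac np+\frac1r<\frac n2$ and $\frac{2n+1}p+\frac1r<n$ are precisely what leaves room for such a gain $\delta$, and conversely that gain is what makes the $k$-summation converge.

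Next I would parabolically rescale the annulus $\{|\xi|\sim\lambda\}$, $\lambda=2^k$, to the unit scale: writing $\hat g(\eta)=\hat f(\lambda\eta)$ and $T=\lambda^2$, the single-scale estimate becomes the long-time bound
\begin{equation*}
\|e^{it\Delta}g\|_{L^p_xL^r_t(\mathbb{R}^n\times[0,T])}\lesssim T^{\frac n2-\frac np-\frac{\delta}{2}}\|g\|_{L^p},\qquad \operatorname{supp}\hat g\subset B^n(0,1)
\end{equation*}
(the $r$-dependence drops out after rescaling). I would then handle the two extremes. For $r=2$, Plancherel in $t$ reduces this to a square function over the spheres $\{2\pi|\xi|^2=-\tau\}$, $-\tau\in[0,2\pi]$, which is controlled via Vega's argument \cite{v88} (Plancherel in $t$ and the $(n{-}1)$-dimensional Stein--Tomas theorem), with the needed power of $T$ coming from being strictly subcritical. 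For $r=\infty$ this is a Carleson-type maximal estimate: one covers $\mathbb{R}^n\times[0,T]$ by finitely overlapping translates of the parallelepipeds $P_T(z_0)$, on each of which the input (after a rotation so that its Fourier support sits in a ball $B^n(Ne_1,1)$, with $N\sim 1$ at this scale, or $N\sim\lambda$ before rescaling) is estimated by Theorem~\ref{t2.1} when $n=2$ and by Theorem~\ref{t4.6} when $n\ge 3$; summing the $L^p_x$-contributions with $L^2$-almost-orthogonality of the pieces returns a $T^{\epsilon}$-type bound. Intermediate $r$ and the remaining $p$ in the region $\{\frac np+\frac1r<\frac n2,\ \frac{2n+1}p+\frac1r<n\}$ would then follow by interpolation, in the spirit of \cite{lrs13,lrv11}.

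The crux --- and where I expect the scheme to only partially deliver --- is the passage, at $r=\infty$, from the \emph{$L^2$-data} maximal estimates \eqref{2D endpoint}, \eqref{n-endpoint} to an \emph{$L^p$-data} single-scale estimate. Applying an $L^2\to L^p_xL^\infty_t$ bound directly to a whole dyadic block costs a Bernstein factor $\lambda^{n(1/2-1/p)}$ that is too expensive, so the argument has to be reorganized as a \emph{square-function} estimate over the $\lambda^{-1/2}$-caps of the annulus --- this is exactly where Stein's square function for the Bochner--Riesz mean, mentioned alongside Conjecture~\ref{c1.4}, enters --- and it is the square-function (rather than single-cap) form of the broad/narrow analysis that is the real obstacle. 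For $n=2$ the endpoint input \eqref{2D endpoint} is strong enough to run this scheme for $p>3$ at $r=\infty$, hence by interpolation on a range of $(p,r)$ strictly enlarging the one in \cite{lrs13}. For $n\ge 3$ the genuine failure of the endpoint maximal estimate \eqref{local maximal estimate} \cite{dkwz19} forces the restriction $p\ge\tilde p_n$ in Theorem~\ref{t4.6}, so only the corresponding subregion of Conjecture~\ref{c1.4} (with $p$ bounded below by the analogue of $p_n$) is reached, improving the known range without closing it completely.
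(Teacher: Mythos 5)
The statement you are proving is Conjecture~\ref{c1.4} (due to Lee, Rogers and Seeger), which this paper does not prove and which remains open for $n\geq 2$; the paper only establishes the partial result Theorem~\ref{c1.3} ($n=2$, $p>3+\frac17$, $r\geq 5+\frac12$). So a complete proof of the full conjectured range cannot be checked against the paper, and your own proposal concedes at the crucial point that it ``only partially delivers.'' The concrete gap is exactly the one you flag but do not fill: Theorems~\ref{t2.1} and~\ref{t4.6} are $L^2$-data estimates, while the single-scale estimate your Littlewood--Paley reduction requires is an $L^p$-data estimate; passing from one to the other by Bernstein costs $\lambda^{n(\frac12-\frac1p)}$, which destroys the gain $\delta$, and the reorganization into a square-function estimate over $\lambda^{-1/2}$-caps that would repair this is precisely the open Stein-square-function-type problem mentioned next to the conjecture. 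In particular your claim that for $n=2$ the scheme runs for all $p>3$ at $r=\infty$ is unsubstantiated: if it did, you would strictly outperform Theorem~\ref{c1.3}, and the paper's own remark that the $\epsilon$-loss maximal estimate \eqref{DGL} is insufficient even for the global critical estimate is a warning that $\epsilon$-loss linear maximal bounds with $L^2$ input do not transfer to $L^p$-data statements so cheaply.

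It is also worth recording how the paper actually obtains its partial result, since the route is quite different from yours. Instead of a linear single-scale reduction, the paper proves a \emph{bilinear} mixed-norm estimate for frequency-separated pieces supported in $\mathbb{B}^n(Ne_1,1)$ (Lemma~\ref{l2.4}), built from Tao's wave-packet relation $\sim$ between tubes and $R^{1-\delta}$-parallelepipeds, the $L_x^2L_t^\infty$ bound \eqref{e4.2} coming from Theorem~\ref{t2.1}, interpolation with the bilinear restriction estimate \eqref{e2.14}, and a bootstrap on the constant $C_R$; it then removes the $R^\epsilon$ loss (Proposition~\ref{p2.2}) and invokes the bilinear-to-linear transference Proposition~\ref{p2.1} of Lee--Rogers--Seeger. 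That last step is the reason the explicit high-frequency dependence $N^{\frac1p-\frac1r}$, with the strict requirement $\gamma<2n(1-\frac{2}{p_0})-\frac{4}{r_0}$, must be tracked throughout --- an accounting your linear scheme never performs. If you want to salvage your write-up, you should either restate the goal as the partial range of Theorem~\ref{c1.3} and switch to this bilinear mechanism, or supply the missing square-function estimate, which is not currently available in the literature.
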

For $n=1$, the conjecture was proved by Lee, Rogers, and Seeger \cite{lrs13}. The  higher dimensional case is still open. Figure 3 is the index graph corresponding to Conjecture \ref{c1.4}. On the line $(0,\frac12)-(\frac{n-1}{2n},\frac12)$, Conjecture \ref{c1.4} is the corollary of the Stein's square function conjecture (see \cite{lrs12}). Which was proved by Lee, Rogers, and Seeger \cite{lrs12} to be true for $p>4$ when $n=2$.  The most recent progress in higher dimension can be found in
Gan, Oh, and Wu \cite{gow21}. Combing the knowledge of  the Fourier restriction estimate, one can obtain the estimates in the red region.  For the yellow area, Lee, Rogers, and Seeger \cite{lrs13} showed that Conjecture \ref{c1.4} can be reduced to the Fourier restriction estimate conjecture that
\begin{equation}\label{restrict estimate}
	\Big\|\int_{\mathbb{B}^n(0,1)}e^{i(\xi_1y_1+\dots+\xi_ny_n+s|\xi|^2)}f(\xi)d\xi\Big\|_{L^p(\mathbb{R}^{n+1})}\lesssim \|f\|_{L^p(\mathbb{B}(0,1))}
\end{equation}
holds for $p>\frac{2(n+1)}{n}$.  In the blue area, as far as the authors knowledge, there is no result in this part.

In this paper, we focus on the  problem for $n=2$. In this case, Wang \cite{w22} proved the Fourier restriction estimate \eqref{restrict estimate}  for $p>3+\frac{3}{13}$.  It follows that \eqref{ls2} is true for $p>3+\frac{3}{13}$ and $p\leq r\leq\infty$ ($A$ of Figure 4). Combining  the results of square function estimate obtained by Lee, Rogers and Seeger \cite{lrs12}, the red region in Figure 4 is verified for \eqref{ls2}. In \cite{lrs13}, Lee, Rogers, and Seeger set up \eqref{ls2} for $p>3+\frac15$ and $r\geq 4$ ($B$ of Figure 4) which allow us to have the yellow region. In this paper, we obtain the brown region in Figure 4 by setting up the following theorem.

\begin{figure}
	\begin{minipage}[t]{0.5\linewidth}\label{12}
		\begin{tikzpicture}
			\draw[->,thick] (0,0) -- (5.2,0) node[right] {$\frac{1}{p}$};
			\draw[->,thick] (0,0) -- (0,5.2) node[above] {$\frac{1}{r}$}; 
			\draw (0,0) node[anchor=north east] {O};
			\draw [red] (0,5) -- (2.5,5);
			\draw (0,0) -- (5,5);
				\draw (2.5,5) circle (.1);
				\draw (3.33,3.33) circle (.1);
					\draw (4,0) circle (.1);
			\draw [thick, dotted] (2.5,5) -- (3.33,3.33);
			\draw [thick, dotted] (4,0) -- (3.33,3.33);
			\draw (0,5) node[left] {$1/2$};
			\draw (2.5,5) node[right] {$(\frac{n-1}{2n},\frac12)$};
			\filldraw[yellow] (3.33,3.33) --(3.33,0)--(0,0)--(3.33,3.33);
			\filldraw[blue] (3.33,3.33) --(3.33,0)--(4,0)--(3.33,3.33);
			\filldraw[red] (3.33,3.33) --(2.5,5)--(0,5)--(0,0)--(3.33,3.33);
			\draw (4,0) node[below] {$\frac{n}{2n+1}$};
			\draw (3.33,3.33) node[right] {$(\frac{n}{2(n+1)},\frac{n}{2(n+1)})$};
		\end{tikzpicture}
		\caption{ }
	\end{minipage}%
	\begin{minipage}[t]{0.5\linewidth}\label{13}
		\begin{tikzpicture}
			\draw[->,thick] (0,0) -- (5.2,0) node[right] {$\frac{1}{p}$};
			\draw[->,thick] (0,0) -- (0,5.2) node[above] {$\frac{1}{r}$}; 
			\draw (0,0) node[anchor=north east] {O};
			\draw (0,5) -- (2.5,5);
			\draw (0,0) -- (5,5);
			\draw (0,2.5) -- (3.1,2.5);
			\draw [thick, dotted] (3.1,2.5) -- (3.1,0);
			\draw (0,1.82) -- (3.25,1.82);
			\draw [thick, dotted](2.5,5) -- (2.9,2.9);
			\draw [thick, dotted] (3.25,1.82) -- (3.25,0);
			\draw [thick, dotted] (2.9,2.9) -- (2.9,0);
			\draw [thick, dotted] (2.5,5) -- (3.33,3.33);
			\draw [thick, dotted] (4,0) -- (3.33,3.33);
			\draw (0,5) node[left] {$1/2$};
			\draw (2.5,5) node[right] {$(\frac14,\frac12)$};
			\draw (3.1,2.5) node[right] {B};
			\draw (3.25,1.82) circle (.1);
			\draw (3.1,2.5) circle (.1);
			\draw (2.9,2.9) circle (.1);
			\draw (3.33,3.33) node[right] {$(\frac13,\frac13)$};
			\draw (3.25,1.82) node[right] {C};
			\draw (2.9,2.9) node[right] {A};
			\draw (4,0) node[below] {$\frac25$};
		    \filldraw[red] (0,5) --(2.5,5)--(2.9,2.9)--(2.9,0)--(0,0);
		    \filldraw[yellow] (2.9,2.9) --(2.9,0)--(3.1,0)--(3.1,2.5);
		    \filldraw[brown](3.1,2.5)--(3.1,0)--(3.25,0)--(3.25,1.82);

		\end{tikzpicture}
		\caption{$A=(\frac{13}{42},\frac{13}{42})$,\ $B=(\frac{5}{16},\frac 14)$,\ $C=(\frac{7}{22},\frac{2}{11})$ }
	\end{minipage}%

\end{figure}

\begin{theorem}\label{c1.3} When $n=2$, the local space-time estimate \eqref{ls2} holds for $p>3+1/7$ and $r\geq 5+1/2.$
\end{theorem}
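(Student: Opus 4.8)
The plan is to obtain Theorem~\ref{c1.3} by interpolating two instances of the local estimate \eqref{ls2}: the endpoint case $(p,r)=(3,\infty)$, which we extract from Theorem~\ref{t2.1}, against the Lee--Rogers--Seeger estimate (\eqref{ls2} for $p>3+\frac15$, $r\ge 4$). Complex interpolation of the scales $B_{\alpha,p}^p$ and $L^p_xL^r_t$ is compatible with the relation $\alpha=2-\frac4p-\frac2r$ precisely because the latter is affine in $(\frac1p,\frac1r)$. Since the line in the $(\frac1p,\frac1r)$-plane through $(\frac13,0)$ and $(\frac5{16},\frac14)$ passes through $C=(\frac7{22},\frac2{11})$, interpolating \eqref{ls2} at $(3,\infty)$ with \eqref{ls2} at a point $(\frac1p,\frac14)$ with $p$ slightly larger than $\frac{16}5$ (hence strictly inside the Lee--Rogers--Seeger range) yields \eqref{ls2} for $p$ in an open interval with left endpoint $3+\frac17$. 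To fill out the region in $r$ one uses that \eqref{ls2} at $(p,r_0)$ implies \eqref{ls2} at $(p,r)$ for every $r\ge r_0$: after restricting $\widehat f$ to $\{|\xi|\sim N\}$ the time-Fourier support of $e^{it\Delta}f$ lies in $|\tau|\sim N^2$, so Bernstein's inequality in $t$ on $[0,1]$ trades $L^{r_0}_t$ for $L^r_t$ at the cost of exactly $N^{2/r_0-2/r}=N^{\alpha-\alpha_0}$, which is the power the index demands. Combined with the Lee--Rogers--Seeger bound applied directly for $p>\frac{16}5$, this covers the whole region $\{p>3+\frac17,\ r\ge 5+\frac12\}$.

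The substantive step is the endpoint inequality $\|e^{it\Delta}f\|_{L^3_xL^\infty_t(\mathbb R^2\times[0,1])}\lesssim\|f\|_{B_{2/3,3}^3(\mathbb R^2)}$. Writing $f=\sum_N P_Nf$ over dyadic $N\ge1$, using the sublinearity of $\sup_t$ and Hölder's inequality in the (geometric) variable $N$, this reduces to the frequency-localized estimate
\[
\|e^{it\Delta}f\|_{L^3_xL^\infty_t(\mathbb R^2\times[0,1])}\ \lesssim_\epsilon\ N^{\frac23+\epsilon}\,\|f\|_{L^3},\qquad\widehat f\ \text{supported in}\ \{|\xi|\sim N\}
\]
(one in fact gets a small power gain here, which is what lets the sum over $N$ reproduce the sharp Besov regularity $2/3$). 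To prove the displayed estimate I would cover $\{|\xi|\sim N\}$ by a finitely overlapping family of unit balls $\mathbb B^2(\xi_\nu,1)$, split $f=\sum_\nu f_\nu$ accordingly, and further decompose each $f_\nu$ into wave packets $f_{\nu,Q}$ adapted to unit cubes $Q$ of physical space, for which $\|f_{\nu,Q}\|_{L^3}\sim\|f_{\nu,Q}\|_{L^2}$. By rotation invariance of the equation one may apply Theorem~\ref{t2.1} to each cap with $R$ comparable to $N$; since over $t\in[0,1]$ the packet $e^{it\Delta}f_{\nu,Q}$ is concentrated in a single $O(N)$-long tube lying in a translate of $P_R$ with $R\sim N$, this gives $\|e^{it\Delta}f_{\nu,Q}\|_{L^3_xL^\infty_t(\mathbb R^2\times[0,1])}\lesssim_\epsilon N^{\frac13+\epsilon}\|f_{\nu,Q}\|_{L^2}$. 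The estimate then follows by summing over $(\nu,Q)$, exploiting the frequency-disjointness of the caps together with the bounded overlap, over a fixed unit time interval, of the tubes attached to each direction.

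The main obstacle is exactly this summation: upgrading Theorem~\ref{t2.1}, a single-cap bound with an $L^2$ right-hand side, to a full-annulus bound with an $L^3$ (more precisely, a Besov) right-hand side, while letting the per-cap loss $N^{1/3}$ aggregate only to $N^{2/3}$ and preserving the $\ell^3$ structure of $B_{2/3,3}^3$. This requires the wave-packet and bilinear-to-linear orthogonality arguments of \cite{lrv11,ckl22}, and it is here that the propagation picture of the Remark following Theorem~\ref{t4.6} — that $e^{it\Delta}f_\nu$ travels at speed $\sim N$ and remains in $P_R$ — is used quantitatively. By comparison, the $\epsilon$-removal, the interpolation, and the passage to larger $r$ via Bernstein in $t$ are routine.
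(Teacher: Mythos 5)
Your route is genuinely different from the paper's, and it contains a gap that is not a matter of missing details but of the strategy itself.

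The paper does not prove, or even attempt, a linear endpoint $\|e^{it\Delta}f\|_{L^3_xL^\infty_t(\mathbb R^2\times[0,1])}\lesssim\|f\|_{B^3_{2/3,3}}$. Instead, starting from the $L^2_xL^\infty_t$ consequence of Theorem~\ref{t2.1} (inequality \eqref{e4.2}), it establishes a \emph{bilinear} $L^2\times L^2\to L^p_xL^r_t$ estimate at $(p,r)=(22/7,\,11/2)$ on parallelepipeds $P_R$ (Lemma~\ref{l2.4}) through a bootstrap using Tao's tube--cube relation $\sim$ and his local $L^4$ bilinear bound (Propositions~\ref{l2.2}, \ref{l2.3}); it then globalizes by $\epsilon$-removal (Proposition~\ref{p2.2}, Appendix A); and it finally invokes the Lee--Rogers--Seeger bilinear-to-Besov reduction (Proposition~\ref{p2.1}) to land on \eqref{ls2}. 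The entire passage from an $L^2$-normed input to the $B^p_{\alpha,p}$-normed output, together with the Littlewood--Paley summation, is carried by Proposition~\ref{p2.1}, and it is the bilinear frequency separation that supplies the orthogonality which makes this work.

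Your proposal replaces that machinery with an interpolation anchored at the linear endpoint $(p,r)=(3,\infty)$, and this endpoint is exactly where the argument does not close. You acknowledge this as the main obstacle, but the difficulty is worse than a missing computation. Theorem~\ref{t2.1} is a per-cap estimate with an $L^2$ right-hand side and an $R^\epsilon$ \emph{loss}; applying it with $R\sim N$ to each unit ball in the annulus $\{|\xi|\sim N\}$ gives $N^{1/3+\epsilon}\|f_\nu\|_{L^2}$, not the small power \emph{gain} you assert is available and which you rely on to recover the sharp Besov exponent $2/3$ after summing over dyadic $N$. Moreover, even for a single annulus, turning $\sim N^2$ per-cap bounds of the form $N^{1/3}\|f_{\nu,Q}\|_{L^2}$ into $N^{2/3}\|f\|_{L^3}$ in the mixed norm $L^3_xL^\infty_t$ requires a square-function or decoupling-type orthogonality in $L^3_xL^\infty_t$; ``bounded overlap of tubes over a unit time interval'' does not provide this, since at a fixed point the directions from the full annulus overlap over an $O(1)$ angular range. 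This is precisely the step the paper avoids by working bilinearly: with frequency separation $\gtrsim 1/2$, the geometry and the LRS reduction supply the missing orthogonality and the frequency summation. Until the $(3,\infty)$ Besov endpoint is actually established -- and it is not a known estimate -- the interpolation scaffolding, correct as its arithmetic is (the line through $(1/3,0)$ and $(5/16,1/4)$ does pass through $(7/22,2/11)$, and Bernstein in $t$ does shift $r$ downward at the correct Besov cost), has no anchor.
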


\begin{remark}
 In higher dimensional case, the same argument as in Section \ref{sub.2} allows us to verify \eqref{ls2} for some $(p,r)$. But they are still in the region  implied by the Fourier restriction estimate obtained by Hickman and Zahl \cite{hz20}.
\end{remark}

This paper is organized as follows. In Section \ref{2D case},  Theorem \ref{t2.1} is proved by using the polynomial partitioning of Guth \cite{G16} and the bilinear refined Strichartz estimate of Du, Guth and Li \cite{dgl17}. In Section \ref{3D case}, Theorem \ref{t2.2x}, a general form of Theorem \ref{t4.6},  is reduced to a  $2$-broad norm $BL_{A}^{p,q}$ estimate. For later use, a $BL_{A}^{2,\infty}$ estimate with input function tangent to an $m$-variety is also studied. In Section \ref{Alg}, we set up the $BL_{A}^{p,q}$ estimate by two algorithms introduced by Hickman and Rogers \cite{hg19}. In section \ref{sub.1}, Theorem \ref{t1.1} is proved by using Theorem \ref{t2.1}. In section \ref{sub.2}, Theorem \ref{c1.3} is proved. In Appendix A, we give a different proof of the epsilon removal for the bilinear space-time estimate from Lee, Rogers and Vargas \cite{lrv11}. In Appendix B, we formulate a conjecture on the bilinear space-time estimate which implies Conjecture \ref{c1.1}.  

 Notation list:
\begin{itemize}
	\item $A\lesssim B$: there exists a positive constant $C$ such that $A\leq CB$, where $C$ may change from line to line. $A\sim B$ means $A\lesssim B $ and $B \lesssim A$.
	\item $A\lesssim_\epsilon B$: the implicant constant $C$  depends on a given $0<\epsilon\ll1$.
	\item $A(R)\leq \text{RapDec}(R)B$: for any $N\geq 1$, there exists a constant $C_N$ such that
	$$A(R)\leq C_N R^{-N}B\ \ \text{for all}\ R\geq 1.$$
    \item For a finite set $A$, $\sharp A$  denotes its cardinality.
    \item  $\epsilon>0$ is a very small number and $\delta\ll\epsilon$ (for example $\delta=\epsilon^2$).
    \item $R\geq 1$ is an arbitrary very large number. $r$ denotes some number $1\leq r\leq R$. $K$ is also very large but far less than $R$.
    \item $\mathbb{B}^n(Ne_1,1):=\{\xi\in\mathbb{R}^n: |\xi-Ne_1|\leq 1\}$, here $N\geq 1$ and $e_1=(1,0,\dots,0)$.
    \item Let $z_0=(x_0,t_0)\in \mathbb R^{n+1}$ and $r\geq 1$, $$P_r(z_0):=\{(x,t)\in \mathbb{R}^n\times [-r+t_0,r+t_0]:x-4\pi tNe_1\in Q_r(x_0)\}.$$ Here $Q_r(x_0)\subset\mathbb R^n$ denotes a cube centered at $x_0$ with side length $r$. The parameter $r$ is the size of $P_r(z_0)$. We abbreviate it as $P_r$ if the center is not important.
    \item  $T_{\theta,\nu}\in \mathbb T[r]$ means that a tube 
    $$T_{\theta,\nu}:=\Big\{(x,t)\in\mathbb{R}^n\times\mathbb{R}\Big|\ 0\leq t\leq r,\ |x-4\pi tc(\theta)-c(\nu)|\leq {r}^{\frac{1}{2}+\delta}\Big\},$$
   here $c(\theta)\in {r}^{-\frac{1}{2}}\mathbb{Z}^{n}$ and $ c(\nu)\in {r}^{\frac{1}{2}}\mathbb{Z}^{n}$.
   \item $Z$ or $Z(P)$ denotes the variety of a polynomial $P$. $T_zZ$ denotes the tangent space of $Z$ at point $z\in Z$.
\end{itemize}




\section{High frequency input endpoint estimate for $n=2$}\label{2D case}
We devote this section to prove Theorem \ref{t2.1} by an induction argument. Moreover, we set up the following theorem.
\begin{theorem}\label{t2.2}
Let $p>3$ and $\ M\geq 1$, $f\in L^2(\mathbb{R}^2)$ with $\text{supp} \hat{f}\subset \mathbb{B}^2(\xi_0, \frac{1}{M})\subset \mathbb{B}^2(Ne_1,1)$ for some $N\geq 1$.
For any $\epsilon>0$, $R\geq 1$, and $r>1/\epsilon^4$, there exists a constant $C_{p,\epsilon}$ such
	that
\begin{equation}\label{ISTE}
	\|e^{it\Delta}f\|_{L_x^{p}L_t^{r}(P_R)}\leq C_{p,\epsilon} N^{\frac{1}{p}-\frac{1}{r}}M^{-\epsilon^2}R^{\epsilon}\|f\|_{L^2}.
\end{equation}
\end{theorem}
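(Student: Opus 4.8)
The plan is to establish \eqref{ISTE} by induction on the scale $R$, using the full statement of Theorem~\ref{t2.2} — at all scales $\le R$, all admissible $M$, and all $r>1/\epsilon^4$ — as the induction hypothesis. The parameter $M$ and the gain $M^{-\epsilon^2}$ are included precisely so that the narrow alternative of a broad--narrow decomposition can be closed; they are irrelevant to the eventual application (where $M=1$) but indispensable for the recursion.

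First I would dispose of the range $M\gtrsim R^{1/2}$: there the cap $\mathbb{B}^2(\xi_0,1/M)$ is finer than the wave-packet scale $R^{-1/2}$ relevant to $P_R$, so $e^{it\Delta}f$ does not disperse over $P_R$ and is essentially a single modulated bump transported along $(4\pi Ne_1,1)$, and \eqref{ISTE} follows by direct computation. So assume $1\le M\lesssim R^{1/2}$. Writing $\hat f(\cdot)=\hat h(\cdot-Ne_1)$ gives $|e^{it\Delta}f(x)|=|e^{it\Delta}h(x-4\pi tNe_1)|$ with $\text{supp}\,\hat h\subset\mathbb{B}^2(\zeta_0,1/M)\subset\mathbb{B}^2(0,1)$ ($\zeta_0:=\xi_0-Ne_1$) and $\|h\|_{L^2}=\|f\|_{L^2}$, so that $\|e^{it\Delta}f\|_{L_x^pL_t^r(P_R)}$ becomes an anisotropic mixed norm of the \emph{standard} extension operator $e^{it\Delta}h$, over the box $Q_R\times[-R,R]$ but with the inner $L^r$ along the lines of direction $(4\pi Ne_1,1)$; it is this tilted formulation that I would attack, and the factor $N^{1/p-1/r}$ is the cost of the tilt. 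Since $r$ is very large one may linearize the $t$-variable — a measurable time function plus discretization — to reduce, up to a harmless factor $R^{O(1/r)}\le R^{\epsilon}$, to the $\ell^p$-aggregate over cubes $q$ of $\|e^{it_q\Delta}h\|_{L^p(q)}$ handled by the refined Strichartz machinery (the inequality $p<r$ makes the cube-summations lossless). Finally a parabolic rescaling of the $1/M$-cap reduces to $M=1$ at scale $R/M^2$, the price $(R/M^2)^{\epsilon}\le M^{-\epsilon^2}R^{\epsilon}$ being exactly the advertised gain; I keep $M$ explicit so the recursion has something to recurse on.

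For the inductive step I would run a broad--narrow decomposition at an intermediate scale $K=R^{\delta}$. In the narrow case $|e^{it\Delta}h|$ is dominated by $\max_\tau|e^{it\Delta}h_\tau|$ over $\lesssim K^{O(1)}$ caps $\tau$ of radius $1/(MK)$, and the triangle inequality in $L^p$ together with the induction hypothesis applied to each $h_\tau$ with $M$ replaced by $MK$ produces the extra factor $K^{-\epsilon^2}$, closing this case with room to spare — the sole reason $M$ is carried. The broad case is where the work lies, and I would treat it by Guth's polynomial partitioning \cite{G16}: at degree $\sim K$ it splits $P_R$ into $\sim K^{n+1}$ cells, each a ball of radius $\sim R/K$ and hence contained in a parallelepiped $P_{R/K}$ with the \emph{same} tilt $(4\pi Ne_1,1)$, together with the $R^{1/2+\delta}$-neighborhood $W$ of the zero set of a degree-$\sim K$ polynomial $P$. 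In the cellular case one recurses on the cells, the loss from the bounded overlap of the wave packets — each core line meeting $Z(P)$ in at most $\deg P$ points — being absorbed by the gain $(R/K)^{\epsilon}$ together with the equidistribution built into the partitioning, as in the broad induction of Du, Guth and Li. In the algebraic case one separates the wave packets into those tangent to $Z(P)$ (making angle $\lesssim R^{-1/2+\delta}$ with its tangent space along their cores) and those transverse to it: only $\sim K$ genuinely transverse directions pass through any $R^{1/2+\delta}$-ball of $W$, so the transverse contribution is morally bilinear and is estimated by the bilinear refined Strichartz estimate of Du, Guth and Li \cite{dgl17} — re-derived in the present tilted, high-frequency normalization — whose tube-count gain more than offsets the accumulated $K^{O(1)}$ losses; the tangential packets, confined to an $R^{1/2+\delta}$-neighborhood of the $2$-dimensional variety $Z(P)$, are disposed of by iterating the partitioning on $Z(P)$ (or by a lower-dimensional refined Strichartz together with the small volume of $W$), the hypothesis $p>3$ providing the slack. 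Choosing $\delta\ll\epsilon$ and using $r>1/\epsilon^4$ so that all accumulated $R^{O(\delta)}$ and $R^{O(1/r)}$ factors fit inside $R^{\epsilon}$ then closes the induction.

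The main obstacle is not any individual analytic estimate — both key inputs, Guth's polynomial partitioning and the Du--Guth--Li bilinear refined Strichartz, are on the shelf — but the bookkeeping needed to run them in the anisotropic, high-frequency, finite-$r$ setting. The mixed norm $\|e^{it\Delta}f\|_{L_x^pL_t^r(P_R)}$ must be linearized in $t$ before polynomial partitioning even applies; the tilt $(4\pi Ne_1,1)$ of $P_R$ couples to both the partitioning cells and the time-slicing; and one must verify that the bilinear refined Strichartz survives the change of variables that removes the tilt — all while keeping the exponent of $N$ equal to $\tfrac1p-\tfrac1r$ exactly and producing the gain $M^{-\epsilon^2}$. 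Threading all of this through the recursion, rather than any single recursion step, is where the effort goes.
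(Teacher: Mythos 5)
Your overall scaffolding — double induction on $R$ and on $M$, with $M^{-\epsilon^2}$ carried precisely to close the narrow case, a broad–narrow decomposition into $1/(KM)$-caps, polynomial partitioning on $P_R$ into cells plus a wall $W$, and the Du–Guth–Li bilinear refined Strichartz (in its tilted, high-frequency form) as the key analytic input, closed by interpolation and the slack $p>3$ — matches the paper's strategy. But two substantive points are off.

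First, you have the tangent/transverse roles in the algebraic case reversed. In the paper (and in the Guth and Du–Guth–Li framework it follows), the \emph{tangent} wave packets are what the bilinear refined Strichartz handles: the tangent tubes hug the $2$-dimensional variety $Z(P)$, the bilinear gain there is what produces the sharp exponent, and this is exactly Proposition~\ref{p2.3} (proved by dyadic pigeonhole, the locally constant property at the anisotropic scale $M\times M\times M/N$, a Galilean change of variables, and the Du–Guth–Li bilinear estimate). The \emph{transverse} packets, by contrast, are handled \emph{linearly} by the scale induction: each $T_{\theta,\nu}$ is transverse to $Z(P)$ in at most $R^{O(\epsilon^4)}$ of the $R^{1-\delta}$-parallelepipeds $P_j$, so $\sum_j\|e^{it\Delta}f_{I,j,\text{trans}}\|^p$ is controlled by the inductive hypothesis at scale $R^{1-\delta}$ with only a $R^{O(\epsilon^4)}\cdot 2^{K^2}$ loss, which $R^{-\delta\epsilon p}$ swallows. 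Your proposal to treat the transverse contribution bilinearly and to dispose of the tangent packets by iterating the partitioning or citing the small volume of $W$ would not deliver the required gain; iterating the partitioning on a $2$-variety in $\mathbb{R}^3$ is the route taken in higher dimensions (Sections~\ref{3D case}–\ref{Alg}), not for $n=2$, where the bilinear tangent estimate is essential for reaching the sharp exponent $p=3$.

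Second, the $t$-linearization step is not how the paper handles the finite-$r$ mixed norm. The paper invokes a mixed-norm version of polynomial partitioning (Theorem~\ref{t2.3}, from Du–Guth–Li) directly on $\|e^{it\Delta}f\|_{L^p_xL^r_t}$, so no discretization of $t$ or reduction to an $\ell^p$-aggregate of fixed-time pieces is ever needed. The role of $r>1/\epsilon^4$ is only to absorb the Hölder loss $(R/N)^{1/r}$ when passing from $L^\infty_t$ to $L^r_t$ in the bilinear tangent estimate and to make the base-case computation close; in particular the constant $C_{p,\epsilon}$ is uniform in $r$, which is what lets one send $r\to\infty$ afterwards. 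Your parabolic-rescaling step to $M=1$ is also not used: the paper keeps $M$ as a genuine induction variable (with base case $M\gtrsim R^{1/2-\delta}$, where all wave packets share essentially one frequency direction) rather than rescaling it away, because the narrow case of the broad–narrow decomposition recurses on $M\mapsto KM$ at the \emph{same} physical scale $R$, and that recursion is what the $M^{-\epsilon^2}$ is for.
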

 Since the constant $C_{p,\epsilon}$ does not depend on $r$, by taking $r\to \infty$ and $M=1$, we have
$$\|e^{it\Delta}f\|_{L_x^{p}L_t^{\infty}(P_R)}\leq C_{p,\epsilon}N^{\frac{1}{p}}R^{\epsilon}\|f\|_{L^2}.$$
By interpolating with the trivial estimate $$\|e^{it\Delta}f\|_{L_x^{2}L_t^{\infty}(P_R)}\lesssim (NR^2)^{1/2}\|f\|_{L^2},$$ we arrive that
$$\|e^{it\Delta}f\|_{L_x^{3}L_t^{\infty}(P_R)}\lesssim_{\epsilon} N^{\frac{1}{3}}R^{\epsilon}\|f\|_{L^2}.$$

\subsection{Wave packet decomposition}

Wave packet decomposition is a fundamental tool in the proofs. In this subsection, we collect several basic concepts and properties of the wave packets without proof. We refer the readers  to \cite{t03} for the details. 

  Let $\phi$ be a Schwartz function satisfying $\text{supp}\, \hat{\phi}\subset\mathbb{B}^{n}(0,3/2)$ and $$\sum_{k\in\mathbb{Z}^{n}}\hat{\phi}(\xi-k)=1,\ \ \ \ \text{for all}\quad \xi\in{\mathbb{R}^n}.$$
Given $\lambda>0$, let 
\begin{equation}\label{eq:2.2}
	\widehat{\phi_{\theta}}(\xi):={\lambda}^{\frac{n}{2}}\hat{\phi}({\lambda}^{\frac{1}{2}}(\xi-c(\theta))),\ \widehat{\phi_{\theta,\nu}}(\xi):=e^{-2\pi ic(\nu)\cdot\xi}\hat{\phi_{\theta}}(\xi),
\end{equation}
where $c(\theta)\in {\lambda}^{-\frac{1}{2}}\mathbb{Z}^{n}$ and $ c(\nu)\in {\lambda}^{\frac{1}{2}}\mathbb{Z}^{n}$. $\theta$ denotes a ${\lambda}^{-\frac{1}{2}}$-ball in the frequency space and $\nu$ denotes a ${\lambda}^{\frac{1}{2}}$-ball in the physical space. Usually $\phi_{\theta,\nu}$ is called as a  wave packet with scale $\lambda$.
Let $\Omega:=\text{supp}\hat{f}\subset\mathbb R^n$. A wave packet decomposition of $f$ at scale $\lambda$ is 
\begin{equation}\label{function decompose}
f=\sum_{\theta,\nu}c_{\theta,\nu}\phi_{\theta,\nu}:=\sum_{\theta,\nu}f_{\theta,\nu},\ \ \theta\cap \Omega\neq\varnothing.
\end{equation}
Then  
\begin{equation}\label{semigroup decompose}
	e^{it\Delta}f=\sum_{\theta,\nu}c_{\theta,\nu}e^{it\Delta}\phi_{\theta,\nu}:=\sum_{\theta,\nu}e^{it\Delta}f_{\theta,\nu}.
\end{equation}
These decompositions  satisfy  the following properties.
\begin{itemize}
    \item The Fourier transform of $f_{\theta,\nu}$ is supported on $\theta$, and the Fourier transform of $e^{it\Delta}f_{\theta,\nu}$ is supported on the paraboloid $$\big\{(\xi,\tau);\tau=2\pi|\xi|^2, \xi\in\theta\big\}.$$
	\item The functions $\phi_{\theta,\nu}$ are approximately orthogonal, and
	\begin{equation}\label{e3.1}
		\sum_{\theta,\nu}|c_{\theta,\nu}|^{2}\approx\sum_{\theta,\nu}\|f_{\theta,\nu}\|_{L^2}^{2}\approx\|f\|^{2}_{L^{2}}.
	\end{equation}
	
	\item By the stationary phase method,
	\begin{equation}\label{e3.2}
		|e^{it\Delta}\phi_{\theta,\nu}|\leq {\lambda}^{-\frac{n}{4}}\chi_{T_{\theta,\nu}}(x,t)+\text{RapDec}(\lambda),\  0\leq t\leq \lambda.
	\end{equation}
	Here 
	$$T_{\theta,\nu}:=\Big\{(x,t)\in\mathbb{R}^n\times\mathbb{R}\Big|\ 0\leq t\leq \lambda,\ |x-4\pi tc(\theta)-c(\nu)|\leq {\lambda}^{\frac{1}{2}+\delta}\Big\},$$
	where $\delta \ll\epsilon$. $T_{\theta,\nu}$ is a tube of length $\lambda$, of
	radius ${\lambda}^{\frac{1}{2}+\delta}$ with the direction $(4\pi c(\theta),\ 1)$. The collection of such tubes is denoted as $\mathbb T[\lambda]$ which is also used to denote the set of the $(\theta,\nu)$ pairs.
\end{itemize}

Since the Fourier transform of $e^{it\Delta}f$ with $\hat{f}\subset \mathbb{B}^n(Ne_1,1)\,(N\geq 1)$ is supported on a rectangular box $1\times ...\times 1\times N$. By the locally constant property, $e^{it\Delta}f$ is roughly constant on a rectangular box $1\times ...\times 1\times 1/N$. Similarly, for each ball $\tau\subset \mathbb{B}^n(Ne_1,1)$ with radius $K^{-1}$, $e^{it\Delta}f_{\tau}$ (with $\text{supp}\hat{f}_\tau\subset\tau$) is locally constant on a rectangular box $K\times ...\times K\times K/N$.

\subsection{The induction argument and the polynomial partitioning}

Theorem \ref{t2.2} will be proved by induction on the physical radius $R$ and the frequency radius $\frac{1}{M}$.  

When $R\lesssim 1$, it's easy to see that
	$$\|e^{it\Delta}f\|_{L_x^{p}L_t^{r}(P_1)}\leq C_{p,\epsilon} N^{\frac{1}{p}-\frac{1}{r}}M^{-\epsilon^2}\|f\|_{L^2}.$$

When $R\gg 1$. If $M>R^{10}$, Theorem \ref{t2.2} is  trivial.
	If $R^{\frac{1}{2}-\delta}\leq M\leq R^{10}$, we adopt the wave packet decomposition \eqref{function decompose} at scale $R$. Since supp$\hat{f}\subset B(\xi_0,\frac{1}{M})$, we have
	$$\sharp\Big\{\theta: \exists \nu\ s.t.\ (\theta,\nu)\in \mathbb{T}\Big\}\leq \frac{(1/M)^2}{1/R}\lesssim R^{O(\delta)}.$$
	Then, for $p>3$ and $r>\epsilon^{-4}$,
	\begin{align}\label{r1.1}
		||e^{it\Delta}f||_{L_x^pL_t^r(P_R)}
		&=\Big\|\Big|\sum_{\theta,\nu}c_{\theta,\nu}e^{it\Delta}\phi_{\theta,\nu}\Big|^2\Big\|_{L_x^{\frac{p}{2}}L_t^{\frac{r}{2}}(P_R)}^{\frac{1}{2}} \nonumber\\
		&\lesssim R^{O(\delta)}\sup_{\theta_0}\Big\|\sum_{\nu}\Big|c_{\theta_0,\nu}e^{it\Delta}\phi_{\theta_0,\nu}\Big|^2\Big\|_{L_x^{\frac{p}{2}}L_t^{\frac{r}{2}}(P_R)}^{\frac{1}{2}}\nonumber\\
		& \lesssim R^{O(\delta)} \sup_{\theta_0}R^{-\frac{1}{2}}\Big(\sum_{\nu}|c_{\theta_0,\nu}|^2\|\chi_{T_{\theta_0,\nu}}\|_{L_x^{\frac{p}{2}}L_t^{\frac{r}{2}}(P_R)}\Big)^{\frac{1}{2}}\nonumber\\
		& \lesssim R^{O(\delta)}R^{-\frac{1}{2}}(N^{-1}R^{\frac{1}{2}})^{\frac{1}{r}}(R^{\frac{3}{2}}N)^{\frac{1}{p}} \sup_{\theta_0}\Big(\sum_{\nu}|c_{\theta_0,\nu}|^2\Big)^{\frac{1}{2}}\nonumber\\
		& \lesssim R^{O(\delta)}N^{\frac{1}{p}-\frac{1}{r}} R^{\frac{3}{2p}-\frac{1}{2}+\frac{1}{2r}}||f||_{L^2}\nonumber\\
		& \lesssim N^{\frac{1}{p}-\frac{1}{r}}M^{-\epsilon^2}R^{\epsilon}\|f\|_{L^2}.
	\end{align}
Therefore it is sufficient to consider the case $R\gg1$ and $M\ll R^{\frac{1}{2}}$.

\begin{description}
\item[Induction Assumption] \eqref{ISTE} holds for $1\leq \tilde{R}\leq R/2$ or $\tilde{M}\geq 2M.$
\end{description}

We now introduce the polynomial partitioning. For a polynomial $P$, it is called nonsingular if $\nabla P(z)\neq 0$ for each point $z\in Z(P)$, where $Z(P):=\{(x,t)\in\rn\times\mathbb{R}:P(x,t)=0\}$ denotes a variety of the polynomial $P$. The following polynomial partitioning theorem on mixed norm is obtained by Du, Guth, and Li.
\begin{theorem}\cite{dgl17}\label{t2.3}
 If $F\in L^1_xL^r_t(\rn\times\mathbb{R})\backslash \{0\},\ 1\leq r<\infty$, then for each $D$, there exists a nonzero polynomial $P$ of degree at most $D$ such that $(\rn\times\mathbb{R})\backslash Z(P)$ is a union of $\sim_n D^{n+1}$ disjoint open sets  $O_i$ and for each $i$, we have
$$\|F\|_{L^1_xL^r_t(\rn\times\mathbb{R})}\leq c_nD^{n+1}\|\chi_{O_i} F\|_{L^1_xL^r_t(\rn\times\mathbb{R})}.$$
Moreover, the polynomial $P$ is a product of distinct nonsingular polynomials. The open set $O_i$ is called as a cell.
\end{theorem}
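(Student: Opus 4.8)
The plan is to run Guth's polynomial-partitioning-by-bisection scheme \cite{G16}, the one new feature being that the relevant ``mass'' functional is merely subadditive rather than additive. Write $N:=n+1$, and for a measurable $E\subseteq\rn\times\mathbb{R}$ put
\[\Phi(E):=\|\chi_E F\|_{L^1_xL^r_t(\rn\times\mathbb{R})}=\int_{\rn}\Big(\int_{\{t\,:\,(x,t)\in E\}}|F(x,t)|^r\,dt\Big)^{1/r}dx .\]
Since $L^1_xL^r_t$ is a normed space for $r\geq1$, $\Phi$ is finitely subadditive over disjoint sets, vanishes on Lebesgue-null sets, and $\Phi(\rn\times\mathbb{R})=\|F\|_{L^1_xL^r_t}\in(0,\infty)$. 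A preliminary step is the continuity of $\Phi$ under the perturbations we will need: one has $|\Phi(E)-\Phi(E')|\leq\Phi(E\triangle E')$, and, more delicately, $Q\mapsto\Phi(R\cap\{Q>0\})$ is continuous in the coefficients of $Q$ for any fixed $R$, which I would obtain by applying dominated convergence to the defining $x$-integral: for a.e.\ $x$ the one-dimensional slice $\{t:Q(x,t)>0\}$ moves continuously up to a set of measure zero, and $t\mapsto\int_{\mathbb{R}}|F(x,t)|^r\,dt$ is absolutely continuous, the whole being dominated by $\|F(x,\cdot)\|_{L^r_t}\in L^1_x$ (this is the only place $r<\infty$ is genuinely used).

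With this in hand I would prove the bisection lemma: given fixed open sets $R_1,\dots,R_m\subseteq\rn\times\mathbb{R}$ and an integer $M$ with $\binom{N+M}{N}-1\geq m$, there is a nonzero polynomial $Q$ with $\deg Q\leq M$ and $\Phi(R_l\cap\{Q>0\})=\Phi(R_l\cap\{Q<0\})$ for all $l$. This is exactly the Borsuk--Ulam argument behind the polynomial ham-sandwich theorem: a polynomial of degree $\leq M$ is a point of the sphere $S^{\binom{N+M}{N}-1}$ in coefficient space; the map into $\mathbb{R}^m$ with $l$-th coordinate $\Phi(R_l\cap\{Q>0\})-\Phi(R_l\cap\{Q<0\})$ is odd (replacing $Q$ by $-Q$ swaps $\{Q>0\}$ with $\{Q<0\}$) and continuous (by the previous paragraph, using that $\{Q=0\}$ is null for $Q\neq0$), so a sphere of dimension $\geq m$ forces a zero. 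Crucially, additivity of $\Phi$ is never invoked here, only continuity and oddness.

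Now I would iterate as in \cite{G16}. Start with the single region $\rn\times\mathbb{R}$, of mass $\Phi_0:=\|F\|_{L^1_xL^r_t}$. Given $2^{j-1}$ pairwise disjoint open regions after $j-1$ steps, each of mass $\geq2^{-(j-1)}\Phi_0$, apply the bisection lemma with $m=2^{j-1}$ and $M=M_j$ the least integer with $\binom{N+M_j}{N}-1\geq2^{j-1}$ (so $M_j\lesssim_n2^{(j-1)/N}$), obtaining $Q_j$, and split each region $R$ into $R\cap\{Q_j>0\}$ and $R\cap\{Q_j<0\}$. The two halves have equal $\Phi$-mass by the bisection identity, while by subadditivity their masses sum to at least $\Phi(R\setminus Z(Q_j))=\Phi(R)$; hence each half has mass $\geq\tfrac12\Phi(R)\geq2^{-j}\Phi_0$, and is in particular nonempty. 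Halt after $s$ steps, with $s$ maximal subject to $\deg(Q_1\cdots Q_s)\leq\sum_{j=1}^sM_j\leq D$; then $2^s\sim_nD^{n+1}$. Set $P:=Q_1\cdots Q_s$, so $\deg P\leq D$, and take the cells $O_i$ to be the $2^s$ sign regions $\bigcap_{j=1}^s\{\sigma_jQ_j>0\}$, $\sigma\in\{+,-\}^s$: these are open, pairwise disjoint, their union is precisely $(\rn\times\mathbb{R})\setminus Z(P)$, there are $\sim_nD^{n+1}$ of them, and each satisfies $\|\chi_{O_i}F\|_{L^1_xL^r_t}=\Phi(O_i)\geq2^{-s}\Phi_0\gtrsim_nD^{-(n+1)}\|F\|_{L^1_xL^r_t}$, which is the asserted inequality after rearranging.

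It remains to make $P$ a product of distinct nonsingular polynomials; here I would perturb each factor, replacing $Q_j$ by $Q_j-c_j$ for a small generic constant $c_j$: by Sard's theorem $Q_j-c_j$ is nonsingular for a.e.\ $c_j$, distinct generic $c_j$ keep the factors pairwise distinct, and since $\{Q_j-c_j>0\}$ tends to $\{Q_j>0\}$ locally in measure as $c_j\to0$, the continuity of $\Phi$ shows that the mass of each slightly-moved cell stays within a factor $1+o(1)$ of its former value while the number of cells is unchanged, so the conclusion survives with a marginally larger $c_n$. The step I expect to cost the most effort is exactly the continuity of $\Phi$ feeding the bisection lemma: in the classical proof that slot is filled by the exact additivity of a measure, whereas here one must check directly that $\|\chi_A F\|_{L^1_xL^r_t}$ behaves well under perturbations of the cutting hypersurface; granting that, everything else is the bookkeeping of \cite{G16} with ``measure'' replaced by $\Phi$ and ``$=$'' replaced by ``$\geq$'' in the mass-propagation step.
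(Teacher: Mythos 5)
The paper does not prove Theorem \ref{t2.3} itself, it imports it from \cite{dgl17}, and your argument is a correct reconstruction of essentially that proof: Guth's polynomial ham--sandwich/Borsuk--Ulam bisection scheme, with additivity of the mass replaced by subadditivity of $\Phi(E)=\|\chi_E F\|_{L^1_xL^r_t}$ together with the dominated-convergence continuity needed for the bisection lemma, and the usual Sard-type perturbation for the nonsingular factorization. So your proposal is correct and follows the same route as the cited source; I see no gaps worth flagging.
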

Therefor, there exists a nonzero polynomial $P$ with degree $D=R^{\epsilon^4}$ such that $(\mathbb{R}^2\times\mathbb{R})\backslash \{Z(P)\}$ is a union of $\sim D^3$ disjoint cells  $O_i$ and 
\begin{equation}\label{e2.1}
	\|e^{it\Delta}f\|_{L_x^{p}L_t^{r}(P_R)}^p\lesssim D^3 \|\chi_{O_i}e^{it\Delta}f\|_{L_x^{p}L_t^{r}(P_R)}^p,\quad \forall\quad 1\leq i\lesssim D^3.
\end{equation}

A wall is the set $W:=N_{R^{1/2+\delta}}Z(P)\cap P_R$ with $\delta=\epsilon^2$, where $N_{R^{1/2+\delta}}Z(P)$ stands for the $R^{1/2+\delta}$-neighborhood of the variety $Z(P)$. For each cell $O_i$, we define
\begin{equation}
	O_i^{'}:=[O_i\cap P_R]\backslash W,\ \ \mathbb T_i:=\{(\theta,\nu)\in \mathbb T[R]:T_{\theta,\nu}\cap O_i^{'}\neq \emptyset \}\ \ \text{and}\ \ f_i:=\sum_{(\theta,\nu)\in \mathbb T_i }f_{\theta,\nu}.
\end{equation}
For each shrunken cell $O'_i$, we have
$$\chi_{Q'_i}e^{it\Delta}f\sim e^{it\Delta}f_i.$$
With these notations, 
\begin{equation}\label{eq:3.5}
	\|e^{it\Delta}f\|_{L_x^{p}L_t^{r}(P_R)}^p\lesssim \sum_i\|\chi_{O_i^{'}}e^{it\Delta}f\|_{L_x^{p}L_t^{r}(P_R)}^p+\|\chi_{W}e^{it\Delta}f\|_{L_x^{p}L_t^{r}(P_R)}^p.
\end{equation}
The estimate of \eqref{eq:3.5} will be catalogued into two cases:
\begin{description}
		\item[Cellular case] there exist $O(D^3)$ many cells $O_i^{'}$ such that for each $i$,
\begin{equation}\label{eq:2.10}
	\|e^{it\Delta}f\|_{L_x^{p}L_t^{r}(P_R)}^p\lesssim D^3\|\chi_{O_i^{'}}e^{it\Delta}f\|_{L_x^{p}L_t^{r}(P_R)}^p.
\end{equation}

\item[Wall case]
\begin{equation}\label{eq:2.11}
\|e^{it\Delta}f\|_{L_x^{p}L_t^{r}(P_R)}^p\lesssim \|\chi_{W}e^{it\Delta}f\|_{L_x^{p}L_t^{r}(P_R)}^p.
\end{equation}	
\end{description}

For the cellular case, by the geometrical fact that each $(\theta,\nu)\in \mathbb T[R]$ intersects at most $D+1$ cells $O_i^{'}$ , then 
$$\sum_i\|f_i\|_{L^2}^2\lesssim \sum_i\sum_{(\theta,\nu)\in\mathbb T_i}\|f_{\theta,\nu}\|_{L^2}^2\lesssim D\sum_{\theta,\nu}\|f_{\theta,\nu}\|_{L^2}^2\sim D\|f\|_{L^2}^2.$$ Then there exists $i_0$ satisfying \eqref{eq:2.10} and 
$$\|f_{i_0}\|_{L^2}\lesssim D^{-1}\|f\|_{L^2}.$$
We break  $P_R$ into $O(1)$ parallelepipeds $P_j$ of size $R/2$, each has the same direction with $P_R$. By the induction assumption, we have
\begin{align*}
	\|e^{it\Delta}f\|_{L_x^{p}L_t^{r}(P_R)}^p
	& \lesssim D^3\|\chi_{O_{i_0}^{'}}e^{it\Delta}f\|_{L_x^{p}L_t^{r}(P_R)}^p\lesssim D^3\|e^{it\Delta}f_{i_0}\|_{L_x^{p}L_t^{r}(P_R)}^p\\
	& \lesssim D^3\max_j\|e^{it\Delta}f_{i_0}\|_{L_x^{p}L_t^{r}(P_j)}^p\lesssim D^3\Big(C_{p,\epsilon}N^{\frac{1}{p}-\frac{1}{r}}M^{-\epsilon^2}(R/2)^{\epsilon}\|f_{i_0}\|_{L^2}\Big)^p\\
	& \lesssim D^{3-p}2^{-\epsilon p}\Big(C_{p,\epsilon}N^{\frac{1}{p}-\frac{1}{r}}M^{-\epsilon^2}R^{\epsilon}\|f\|_{L^2}\Big)^p.
\end{align*}
Since $D=R^{\epsilon^4}$ and $p>3$, with $R$ large enough, we can close the induction.

To deal with the wall case,  the set $P_R$ is broken into $R^{3\delta}$ parallelepipeds $P_{R^{1-\delta}}(z_j)$ of size $R^{1-\delta}$. Denote $P_j=P_{R^{1-\delta}}(z_j)$. For any $T_{\theta,\nu}$ with $(\theta,\nu)\in \mathbb{T}[R]$, there are two more conceptions: tangent tube and transverse tube.
\begin{description}
	\item[Tangent tube] $T_{\theta,\nu}$ is $R^{-\frac{1}{2}+\delta}$ tangent to the variety $Z(P)$ on a parallelepiped  $P_j$ if  $T_{\theta,\nu}\cap P_j\cap W\neq \emptyset$ and
$$\angle(G_0(\theta),T_z[Z(P)])\leq R^{-\frac{1}{2}+\delta}$$
for any non-singular point $z\in 10T_{\theta,\nu}\cap 2P_j\cap Z(P)$.
\item[Transverse tube] $T_{\theta,\nu}$ is $R^{-\frac{1}{2}+\delta}$ transverse to the variety $Z(P)$ on a parallelepiped $P_j$ if  $T_{\theta,\nu}\cap P_j\cap W\neq \emptyset$ and
$$\angle(G_0(\theta),T_z[Z(P)])> R^{-\frac{1}{2}+\delta}$$
for some non-singular point $z\in 10T_{\theta,\nu}\cap 2P_j\cap Z(P)$.
\end{description}
Here $G_0(\theta)=(4\pi c(\theta),1)$ is the direction of the tube $T_{\theta,\nu}$. $T_z[Z(P)]$ stands for the tangent space of $Z(P)$ at the point $z$.
Let $\mathbb{T}_{j,tang}$ represent the set of the tubes tangent to  $Z(P)$ in $P_j$, and $\mathbb{T}_{j,trans}$ denote the set of the tubes transverse to $Z(P)$ in $P_j$. Set $$f_{j,tang}:=\sum_{(\theta,\nu)\in\mathbb{T}_{j,tang}}f_{\theta,\nu},\quad f_{j,trans}:=\sum_{(\theta,\nu)\in\mathbb{T}_{j,trans}}f_{\theta,\nu}.$$   We have, for $(x,t)\in P_j\cap W$,
$$e^{it\Delta}f(x)\sim e^{it\Delta}f_{j,tang}(x)+e^{it\Delta}f_{j,trans}(x).$$

 Let parameter $K\gg1$. We decompose 
$\mathbb{B}^n(\xi_0, \frac{1}{M})$ into balls $\tau$ of radius $(KM)^{-1}$ and $$f=\sum_{\tau}f_{\tau},\,\quad\text{supp}\hat{f_\tau}\subset \tau.$$ We introduce in the following functions
\begin{equation}
	f_{\tau,j,tang}=\sum_{\theta\in\tau}\sum_{(\theta,v)\in\mathbb{T}_{j,trans}}f_{\theta,v},\quad f_{\tau,j,trans}=\sum_{\theta\in\tau}\sum_{(\theta,v)\in\mathbb{T}_{j,tang}} f_{\theta,v}.
\end{equation}
Now we consider two cases.
\begin{itemize}
	\item If
\begin{equation}\label{e2.11}
	K^{\epsilon^4}\max_{\tau}|e^{it\Delta}f_{\tau}(x)|>|e^{it\Delta}f(x)|\ \ \text{for}\ \ (x,t)\in W.
\end{equation}
\end{itemize}

By the induction assumption,  
\begin{align*}
	\|\max\limits_{\tau}|e^{it\Delta}f_{\tau}|\|_{L_x^pL_t^r(P_R)}
	& \leq\Big\|\Big(\sum_{\tau}|e^{it\Delta}f_{\tau}|^p\Big)^{\frac{1}{p}}\Big\|_{L_x^pL_t^r(P_R)}\\
	& \leq\Big(\sum_{\tau}||e^{it\Delta}f_{\tau}||_{L_x^pL_t^r(P_R)}^p\Big)^{\frac{1}{p}}\\
	& \lesssim \Big(\sum_{\tau}(C_{p,\epsilon}N^{\frac1p-\frac1r}(KM)^{-\epsilon^2}R^{\epsilon}\|f_{ \tau}\|_{L^2})^p\Big)^{\frac{1}{p}}\nonumber\\
	& \lesssim C_{p,\epsilon} K^{-\epsilon^2}N^{\frac1p-\frac1r}M^{-\epsilon^2}R^{\epsilon}\|f\|_{L^2}.
\end{align*}
From this estimate and \eqref{e2.11}, it follows that 
$$\|\chi_{W}e^{it\Delta}f\|_{L_x^{p}L_t^{r}(P_R)}^p\lesssim C_{p,\epsilon}K^{p(\epsilon^4-\epsilon^2)}(N^{\frac1p-\frac1r}M^{-\epsilon^2}R^{\epsilon}\|f\|_{L^2})^p.$$
By taking $K$ sufficiently large, we can close the induction.

\begin{itemize}
	\item If $$
	K^{\epsilon^4}\max_{\tau}|e^{it\Delta}f_{\tau}(x)|\leq|e^{it\Delta}f(x)| \ \ \text{for}\ \ (x,t)\in W.
$$
\end{itemize}
By the broad-narrow catalogue as in \cite[Section 6]{dgl17}, we have
\begin{align}\label{eq:2.12}
	&\|\chi_{W}e^{it\Delta}f\|_{L_x^{p}L_t^{r}(P_R)}^p \\\nonumber
	\lesssim &\sum_j\|\max_{I}\chi_{P_j\cap W}e^{it\Delta}f_{I,j,trans}\|_{L_x^{p}L_t^{r}(P_j)}^p+K^{10p}\sum_{j}\|\chi_{P_j\cap W}\text{Bil}(e^{it\Delta}f_{j,tang})\|_{L_x^{p}L_t^{r}(P_j)}^p \\\nonumber
	=:& \text{Linear transverse term}+\text{Bilinear tangent term}.
\end{align}
Here $I$ denotes a subset of the collection of  $1/KM$ balls $\tau$, $$f_{I,j,trans}(x):=\sum_{\tau\in I}f_{\tau,j,trans}$$ and
$$\text{Bil}(e^{it\Delta}f_{j,tang}):=\max_{\{(\tau_1,\tau_2):dist(\tau_1,\tau_2)\geq\frac{1}{KM}\}}|e^{it\Delta}f_{\tau_1,j,tang}e^{it\Delta}f_{\tau_2,j,tang}|^{\frac{1}{2}}.$$

For the linear transverse term, Guth \cite  [Lemma 3.5]{G16} proved that for a given $(\theta,\nu)\in \mathbb T$
$$\sharp\Big\{j: (\theta,\nu)\in\mathbb{T}_{j,trans}\Big\}\leq R^{O(\epsilon^4)}.$$ 
By the induction assumption,
\begin{align*}
	\sum_j\|\max_{I}\chi_{P_j\cap W}e^{it\Delta}f_{I,j,trans}\|_{L_x^{p}L_t^{r}(P_j)}^p 
	&\lesssim \sum_j\sum_{I\subset \mathcal{T}}\|\chi_{P_j\cap W}e^{it\Delta}f_{I,j,trans}\|_{L_x^{p}L_t^{r}(P_j)}^p\\
	&\lesssim \sum_j2^{K^2}\Big(C_{\epsilon}N^{\frac1p-\frac1r}M^{-\epsilon^2}R^{(1-\delta)\epsilon}\|f_{j,trans}\|_{L^2}\Big)^p\\
	&\lesssim 2^{K^2}R^{O(\epsilon^4-\delta\epsilon p)} \Big(C_{p,\epsilon}N^{\frac1p-\frac1r}M^{-\epsilon^2}R^{\epsilon}\|f\|_{L^2}\Big)^p.
\end{align*}
Here we used the fact $\sharp\mathcal{T}\leq 2^{K^2}$ since $\mathcal{T}$ is the possible  collection of  $I$ . We can close the induction since $\delta=\epsilon^2$ and $K\ll R$.

The estimate of the bilinear tangent term is complicated. We will set up the following estimate in the next subsection. 
\begin{proposition}[Bilinear tangent estimate]\label{p2.3}
Let $N,\, M\geq 1$. Suppose that $\xi_0\in \mathbb{B}^2(Ne_1,1)$ and that the Fourier supports of $f_i\,(i=1,2)$ are in $\mathbb{B}^2(\xi_0, \frac{1}{M})$ with distance at least $1/KM$ with the same $K=K(\epsilon)$ as above. Further more assume that each $f_i$ is concentrated in $\mathbb{T}_Z(R)$, where
$$\mathbb{T}_Z(R):=\{(\theta,\nu)\in\mathbb{T}[R]:T_{\theta,\nu}\ \text{is}\ R^{-\frac{1}{2}+\delta}\ \text{-tangent to}\ Z(P)\}.$$ Then
\begin{equation}\label{e2.8}
\||e^{it\Delta}f_1e^{it\Delta}f_2|^{\frac{1}{2}}\|_{L_x^{3}L_t^{\infty}(P_R)}\lesssim_{K,\epsilon}  N^{\frac{1}{3}} R^{\epsilon}\|f_1\|_{L^2}^\frac12\|f_2\|_{L^2}^{\frac12}.
\end{equation}	
\end{proposition}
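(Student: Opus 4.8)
\emph{Proof strategy.} The plan is to deduce \eqref{e2.8} from the bilinear refined Strichartz estimate of Du, Guth and Li \cite{dgl17} — the ingredient that drives their proof of \eqref{DGL} — applied to data with Fourier support in $\mathbb{B}^2(0,1)$, with $1/K$-separated frequency caps, and with wave packets tangent to a variety. The reduction runs through Bernstein's inequality in time, the Galilean shear $(x,t)\mapsto(x-4\pi t\xi_0,t)$, and a parabolic rescaling; the whole content is to verify that along the way exactly the factor $N^{1/3}$ (and no more) is produced, all powers of $M$ cancelling.

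First I would replace the mixed norm over $P_R$ by a spacetime $L^3$ norm over a ball. Let $G:=e^{it\Delta}f_1\,e^{it\Delta}f_2$. Since $\widehat{f_1},\widehat{f_2}$ are supported in $\mathbb{B}^2(\xi_0,1/M)$, the map $\xi\mapsto 2\pi|\xi|^2$ sends this ball into an interval of length $\sim N/M$, so the spacetime Fourier transform of $G$ lies in a slab of thickness $\lesssim N/M$ in the time frequency (a standard bump adapted to $P_R$ does not spoil this). Bernstein's inequality in $t$, equivalently the locally constant property of the wave packets, then yields
\begin{equation*}
\|\,|e^{it\Delta}f_1e^{it\Delta}f_2|^{1/2}\,\|_{L^3_xL^\infty_t(P_R)}\lesssim (N/M)^{1/3}\,\|\,|e^{it\Delta}f_1e^{it\Delta}f_2|^{1/2}\,\|_{L^3_{x,t}(2P_R)}.
\end{equation*}
Writing $\widehat{f_i}(\xi)=\widehat{g_i}(\xi-\xi_0)$ with $\widehat{g_i}$ supported in $\mathbb{B}^2(0,1/M)$, one has $|e^{it\Delta}f_i(x)|=|e^{it\Delta}g_i(x-4\pi t\xi_0)|$; the shear $y=x-4\pi t\xi_0$ has unit Jacobian on $\mathbb{R}^{3}$ and, $|\xi_0-Ne_1|\le 1$ being harmless at scale $R$, carries $2P_R$ inside a spacetime ball $\mathbb{B}^3(0,CR)$. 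Hence the right-hand side equals, up to a constant, $(N/M)^{1/3}\|\,|e^{it\Delta}g_1e^{it\Delta}g_2|^{1/2}\,\|_{L^3_{y,t}(\mathbb{B}^3(0,CR))}$, and the hypothesis $f_i\in\mathbb{T}_Z(R)$ becomes the statement that $g_i$ is concentrated on wave packets tangent (in the standard, untilted metric) to the sheared variety, of the same degree $D=R^{\epsilon^4}$.

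Next I would parabolically rescale. With $\widehat{h_i}(\zeta):=\widehat{g_i}(\zeta/M)$ and $(y,t)\mapsto(y/M,t/M^2)$, the frequency balls $\mathbb{B}^2(0,1/M)$ become $\mathbb{B}^2(0,1)$, the separation $1/(KM)$ of the supports becomes $1/K$, the ball $\mathbb{B}^3(0,CR)$ becomes $\mathbb{B}^3(0,R')$ with $R':=CR/M$, the variety stays of degree $D$, and $\|h_i\|_2=M\|g_i\|_2=M\|f_i\|_2$; moreover the tangent-packet structure persists at scale $R'$, which is where the reduction $1\le M\ll R^{1/2}$ made before the induction is used, to match the wave-packet scales $R^{1/2+\delta}$ and $(R')^{1/2+\delta}$. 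Tracking the Jacobians,
\begin{equation*}
\|\,|e^{it\Delta}g_1e^{it\Delta}g_2|^{1/2}\,\|_{L^3_{y,t}(\mathbb{B}^3(0,CR))}=M^{-2/3}\,\|\,|e^{it\Delta}h_1e^{it\Delta}h_2|^{1/2}\,\|_{L^3_{y,t}(\mathbb{B}^3(0,R'))}.
\end{equation*}
Now the bilinear refined Strichartz estimate of \cite{dgl17} (in its spacetime form, for wave packets tangent to a variety), applied on $\mathbb{B}^3(0,R')$ to $h_1,h_2$ with unit-scale frequency support, $1/K$-separated caps, and packets tangent to a degree-$D$ variety, gives
\begin{equation*}
\|\,|e^{it\Delta}h_1e^{it\Delta}h_2|^{1/2}\,\|_{L^3_{y,t}(\mathbb{B}^3(0,R'))}\lesssim_{K,\epsilon}(R')^{\epsilon}\,\|h_1\|_2^{1/2}\|h_2\|_2^{1/2},
\end{equation*}
the polynomial-degree loss being $R^{O(\epsilon^4)}$ and absorbable into $R^\epsilon$. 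Chaining the three displays, using $\|h_i\|_2=M\|f_i\|_2$ and $(R')^\epsilon=(CR/M)^\epsilon\lesssim_\epsilon R^\epsilon$, the accumulated factor is $(N/M)^{1/3}\cdot M^{-2/3}\cdot M^{1/2}\cdot M^{1/2}=N^{1/3}$, which proves \eqref{e2.8}.

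The genuinely delicate point — everything else being bookkeeping — is the first step. The norm $L^3_xL^\infty_t$ does not transform cleanly under the shear: for fixed $x$ the supremum in $t$ is taken along a line of slope $\sim N$ in the $(y,t)$ picture, and one must show that reducing it to a spacetime norm costs precisely $(N/M)^{1/3}$ — not less, which would violate the Knapp-type necessary conditions, and not more. This is exactly where the localization $\widehat{f_i}\subset\mathbb{B}^2(\xi_0,1/M)$ is essential (it makes the exponent $N/M$ and not $N$), and where one uses that $e^{it\Delta}g_i$, hence $G$, is essentially constant on spacetime boxes of dimensions $M\times M\times M^2$, together with a count of how many such boxes a slanted segment of spacetime length $\sim R$ can meet. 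A secondary, routine point is to check that the tangency and the ``concentration on $\mathbb{T}_Z(R)$'' structure survive the shear and the parabolic rescaling; this is standard once $1\le M\ll R^{1/2}$, and is the reason that case was isolated before the induction.
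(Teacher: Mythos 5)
Your scheme — Bernstein in time at cost $(N/M)^{1/3}$, Galilean shear to $\xi_0=0$, parabolic rescaling to unit frequency scale, then a bilinear refined Strichartz on $\mathbb{B}^3(0,R')$ — is a clean bookkeeping framework, and the exponents do indeed telescope to $N^{1/3}$ \emph{if} the last step held. The genuine gap is precisely there: the estimate you invoke,
\begin{equation*}
\||e^{it\Delta}h_1e^{it\Delta}h_2|^{1/2}\|_{L^3_{y,t}(\mathbb{B}^3(0,R'))}\lesssim_{K,\epsilon}(R')^{\epsilon}\|h_1\|_2^{1/2}\|h_2\|_2^{1/2},
\end{equation*}
is not what Du--Guth--Li prove. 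Their refined bilinear Strichartz (Proposition 8.2 of \cite{dgl17}, reproduced here as Proposition \ref{p2.3x}) is an $L^6$ estimate with a gain that depends on the number $J$ of $R^{1/2}$-cubes entering the pigeonholed set $\tilde Y$:
\begin{equation*}
\||e^{it\Delta}g_1e^{it\Delta}g_2|^{1/2}\|_{L^6(\tilde Y)}\lesssim_{K,\epsilon}M^{1/6}(JR)^{-1/6}R^{\epsilon}\|g_1\|_2^{1/2}\|g_2\|_2^{1/2}.
\end{equation*}
If you try to extract from this the spacetime $L^3$ bound over the whole ball, a pigeonhole in the per-cube $L^6$-norm $\gamma$ gives $J\gamma^3\lesssim R^{-1/2+3\epsilon}$, and then
\begin{equation*}
\||e^{it\Delta}h_1e^{it\Delta}h_2|^{1/2}\|_{L^3(\mathbb{B}^3(0,R'))}^3\lesssim\sum_{Q_j}|Q_j|^{1/2}\||e^{it\Delta}h_1e^{it\Delta}h_2|^{1/2}\|_{L^6(Q_j)}^3\lesssim (R')^{3/4}J\gamma^3\lesssim (R')^{1/4+3\epsilon},
\end{equation*}
i.e.\ a loss of $(R')^{1/12}$. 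So your chain cannot close with the advertised constant.

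What makes the paper's argument work is that the passage from $\|\cdot\|_{L^3_xL^\infty_t(P_R)}$ to the $L^6$ quantity is \emph{not} a fixed-cost Bernstein step. Instead one introduces the set $A_H=\{x:\sup_t|e^{it\Delta}f_1e^{it\Delta}f_2|^{1/2}\sim H\}$, a transversal $X$ of boxes $M\times M\times M/N$ with $|X|\sim(M/N)|A_H|$, a pigeonholed union $Y$ of $J$ many $R^{1/2}$-parallelepipeds, and the chain
\begin{equation*}
\||e^{it\Delta}f_1e^{it\Delta}f_2|^{1/2}\|_{L^3_xL^\infty_t(P_R)}\lesssim R^{\epsilon}N^{1/3}(M^{-1}JR)^{1/6}\||e^{it\Delta}f_1e^{it\Delta}f_2|^{1/2}\|_{L^6(Y)}.
\end{equation*}
Here the cost $(M^{-1}JR)^{1/6}$ carries exactly the $J$-dependence needed to cancel the $M^{1/6}(JR)^{-1/6}$ gain from Proposition \ref{p2.3x}. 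Your Bernstein-in-time step discards this $J$-dependence and is therefore too lossy; the $L^\infty_t$ structure must be exploited via the counting of boxes in $X$ against $R^{1/2}$-cubes in $Y$, not by a global smoothing inequality. Everything else in your write-up (the $(N/M)^{1/3}$ Bernstein constant, the shear and rescaling bookkeeping, the check that tangency survives the shear) is correct and is indeed implicit in the paper's proof, but the decisive ingredient is missing.
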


In this proposition, we say a tube $T_{\theta,\nu}$ is $R^{-\frac{1}{2}+\delta}$ tangent to $Z(P)$ if 
$$T_{\theta,\nu}\subset N_{R^{\frac{1}{2}+\delta}}Z(P)\cap P_R$$
and 
$$\angle(G_0(\theta),T_z[Z(P)])\leq R^{-\frac{1}{2}+\delta}$$
for any non-singular point $z\in N_{2R^{\frac{1}{2}+\delta}}(T_{\theta,\nu})\cap 2P_R\cap Z(P)$.
A function $f$ is concentrated in $\boldsymbol{\Pi}\subset\mathbb{T}[R]$ if
\begin{equation}
	\sum_{(\theta,\nu)\notin \boldsymbol{\Pi}}\|f_{\theta,\nu}\|_{L^2}\lesssim \text{RapDec}(R)\|f\|_{L^2}.
\end{equation}

In order to apply Proposition \ref{p2.3} to deal with $\text{Bil}(e^{it\Delta}f_{j,tang})$, wave packets decomposition at different scales will be considered.

Let $R^{\frac12}\leq r\leq R$ and $P_r(z_j)$ with $z_j=(x_j,t_j)\in P_R$. Define
$$e^{it\Delta}f(x)=e^{i\tilde{t}\Delta}\tilde{f}(\tilde{x}),$$
where $(x,t)=(\tilde{x},\tilde{t})+(x_j,t_j)$, $(\tilde{x},\tilde{t})\in P_r$ and 
$$\hat{\tilde{f}}(\xi)=e^{2\pi i (x_j\cdot\xi-2\pi t_j|\xi|^{2})}\hat{f}(\xi).$$
 $\tilde{f}$ has the following wave packets decomposition
$$\tilde{f}=\sum_{\tilde{\theta},\tilde{\nu}\in\mathbb{T}[r]}\tilde{f}_{\tilde{\theta},\tilde{\nu}}+\text{RapDec}(R)\|f\|_{L^2}.$$
This fact was proved by  Guth.
\begin{lemma}\cite{G18}\label{l2.5}
	If $f$ is concentrated in $\boldsymbol{\Pi}\subset\mathbb{T}(R)$, then $\tilde{f}$ is concentrated in $\tilde{\boldsymbol{\Pi}}\subset\mathbb{T}(r)$ with the following property. For every $(\tilde{\theta},\tilde{\nu})\in \tilde{\boldsymbol{\Pi}}$, there exist a $(\theta,\nu)\in \boldsymbol{\Pi}$ such that
	$$\text{dist}_H(T_{\tilde{\theta},\tilde{\nu}}+z_j, T_{\theta,\nu}\cap P_r(z_j))\lesssim R^{\frac12+\delta}\ \ \text{and}\ \ \angle(G_0(\theta),G_0(\tilde{\theta}))\lesssim r^{-\frac12},$$
	where $\text{dist}_H$ denotes the Hausdorff distance.
\end{lemma}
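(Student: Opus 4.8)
The plan is to follow a single scale-$R$ wave packet through the affine substitution and then re-decompose it at scale $r$. First I would record that $f\mapsto\tilde f$ only modulates $\widehat f$ by the unimodular factor $e^{2\pi i(x_j\cdot\xi-2\pi t_j|\xi|^2)}$, so by Plancherel $\|\tilde g\|_{L^2}=\|g\|_{L^2}$ for every $g$, and $\widehat{\tilde f_{\theta,\nu}}$ is still supported in $\theta$. Writing the hypothesis ``$f$ concentrated in $\boldsymbol{\Pi}$'' as $f=\sum_{(\theta,\nu)\in\boldsymbol{\Pi}}f_{\theta,\nu}+g$ with $\|g\|_{L^2}\lesssim\text{RapDec}(R)\|f\|_{L^2}$, it then suffices --- by linearity, by $\|\tilde g\|_{L^2}=\|g\|_{L^2}$, and because $\sharp\boldsymbol{\Pi}\lesssim R^{O(1)}$ so that $R^{O(1)}$ copies of $\text{RapDec}(R)$ is still $\text{RapDec}(R)$ --- to prove: for each $(\theta,\nu)\in\boldsymbol{\Pi}$, the scale-$r$ wave packet decomposition of $\tilde f_{\theta,\nu}$ is, up to a $\text{RapDec}(R)\|f\|_{L^2}$ error, carried by those $(\tilde\theta,\tilde\nu)\in\mathbb{T}[r]$ obeying both geometric conditions relative to $(\theta,\nu)$. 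One then defines $\tilde{\boldsymbol{\Pi}}$ to be the union over $(\theta,\nu)\in\boldsymbol{\Pi}$ of these admissible caps.

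On the frequency side, $\theta$ is an $R^{-1/2}$-ball, so since $r\le R$ it meets only $O(1)$ of the $r^{-1/2}$-caps $\tilde\theta$, each with $|c(\tilde\theta)-c(\theta)|\lesssim r^{-1/2}$; because $G_0(\theta)$ and $G_0(\tilde\theta)$ share the last coordinate $1$,
$$\angle(G_0(\theta),G_0(\tilde\theta))\lesssim|c(\theta)-c(\tilde\theta)|\lesssim r^{-1/2},$$
which is the angular requirement. For any $\tilde\theta$ disjoint from $\theta$, applying the cutoff $\widehat{\phi_{\tilde\theta}}$ to $\widehat{\tilde f_{\theta,\nu}}$ (supported in $\theta$) costs $\text{RapDec}(r)$ by the Schwartz decay of $\hat\phi$ at scale $r^{-1/2}$; and since $r\ge R^{1/2}$, $\text{RapDec}(r)$ implies $\text{RapDec}(R)$, so such $\tilde\theta$ may be discarded.

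On the physical side, the identity $e^{i\tilde t\Delta}\tilde f_{\theta,\nu}(\tilde x)=e^{it\Delta}f_{\theta,\nu}(x)$ with $(x,t)=(\tilde x,\tilde t)+z_j$, taken at $\tilde t=0$, gives
$$\tilde f_{\theta,\nu}(\tilde x)=e^{it_j\Delta}f_{\theta,\nu}(\tilde x+x_j).$$
Since $z_j\in P_R$ forces $|t_j|\lesssim R$, the stationary phase estimate behind \eqref{e3.2} --- run with a slightly fattened tube to leave $O(\delta)$ of room --- shows $e^{it_j\Delta}\phi_{\theta,\nu}$ decays rapidly outside the $R^{1/2+O(\delta)}$-neighbourhood of $4\pi t_jc(\theta)+c(\nu)$, hence $\tilde f_{\theta,\nu}$ is concentrated, up to $\text{RapDec}(R)\|f\|_{L^2}$, on the ball $B(p_0,R^{1/2+O(\delta)})$ with $p_0:=4\pi t_jc(\theta)+c(\nu)-x_j$. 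By the standard localization of wave-packet coefficients (see \cite{t03}), only the $r^{1/2}$-caps $\tilde\nu$ with $|c(\tilde\nu)-p_0|\lesssim R^{1/2+O(\delta)}$ then carry non-negligible coefficients.

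Finally, for an admissible $(\tilde\theta,\tilde\nu)$ I would compare the core line $\{(4\pi tc(\theta)+c(\nu),t)\}$ of $T_{\theta,\nu}$ with the core line $\{(4\pi(t-t_j)c(\tilde\theta)+c(\tilde\nu)+x_j,t)\}$ of $T_{\tilde\theta,\tilde\nu}+z_j$: at any common time with $|t-t_j|\lesssim r$ their separation is at most
$$4\pi|t-t_j|\,|c(\theta)-c(\tilde\theta)|+|p_0-c(\tilde\nu)|\lesssim r\cdot r^{-1/2}+R^{1/2+O(\delta)}\lesssim R^{1/2+O(\delta)}.$$
Since $T_{\theta,\nu}\cap P_r(z_j)$ and $T_{\tilde\theta,\tilde\nu}+z_j$ both lie in the slab $|t-t_j|\lesssim r$ and have radii $\le R^{1/2+\delta}$, this yields $\text{dist}_H(T_{\tilde\theta,\tilde\nu}+z_j,\,T_{\theta,\nu}\cap P_r(z_j))\lesssim R^{1/2+O(\delta)}$; since every $\delta$ occurring obeys $\delta\ll\epsilon$, one absorbs the $O(\delta)$ by a harmless relabeling to reach the stated $R^{1/2+\delta}$. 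Summing the $\lesssim R^{O(1)}$ many $\text{RapDec}(R)$ errors over $(\theta,\nu)\in\boldsymbol{\Pi}$ then gives the concentration of $\tilde f$ in $\tilde{\boldsymbol{\Pi}}$. The hard part is not the phase-space picture, which is essentially forced, but turning each ``far cap $\Rightarrow$ negligible coefficient'' step into honest rapid decay despite the merely approximate orthogonality of the wave packets, together with the minor discrepancies near the time endpoints of $T_{\theta,\nu}\cap P_r(z_j)$ --- both handled exactly as in Guth \cite{G18}.
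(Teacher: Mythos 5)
The paper does not prove this lemma at all—it is quoted verbatim from Guth \cite{G18}—and your argument is exactly the standard scale-comparison proof behind that citation: reduce to a single scale-$R$ packet, use containment of the $R^{-1/2}$-cap $\theta$ in $O(1)$ caps $\tilde\theta$ for the angle bound, use the identity $\tilde f_{\theta,\nu}(\tilde x)=e^{it_j\Delta}f_{\theta,\nu}(\tilde x+x_j)$ plus tube localization at time $t_j$ for the position of the admissible $\tilde\nu$, and compare core lines to get the $R^{1/2+O(\delta)}$ Hausdorff bound. Your sketch is correct; the only residual issue (the one-sided time interval $0\le\tilde t\le r$ of $T_{\tilde\theta,\tilde\nu}$ versus the two-sided time interval of $P_r(z_j)$, which affects the literal Hausdorff statement) is an artifact of the paper's conventions rather than a gap in your reasoning, and the $O(\delta)$ versus $\delta$ loss is harmless since $\delta\ll\epsilon$ is arbitrary.
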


Let $g=f_{j,tang}$ and $g_i=f_{\tau_i,j,tang}\ (i=1,2)$. From the definition of $(\theta,\nu)\in\mathbb{T}_{j,tang}$ and Lemma \ref{l2.5}, we obtain $\tilde{g}$ is concentrated in $\mathbb{T}_Z(R^{1-\delta})$ in $P_j$. By Proposition \ref{p2.3}, we have
\begin{align*}
\|\chi_{P_j\cap W}\text{Bil}(e^{it\Delta}f_{j,tang})\|_{L_x^{3}L_t^{\infty}(P_j)} 
	&\lesssim \||e^{it\Delta}\tilde{g}_1e^{it\Delta}\tilde{g}_2|^{\frac{1}{2}}\|_{L_x^{3}L_t^{\infty}(P_{R^{1-\delta}})} \\
	&\lesssim_{K,\epsilon} N^{\frac{1}{3}}R^{\epsilon}\|f_{\tau_1,j,tang}\|_{L^2}^\frac12\|f_{\tau_2,j,tang}\|_{L^2}^\frac12.
\end{align*}
Interpolating the bilinear estimate with $$\|\text{Bil}(e^{it\Delta}f_{j,tang})\|_{L_{x,t}^{\infty}(P_j)}\lesssim_{\epsilon} M^{-\frac12}\|f_{\tau_1,j,tang}\|_{L^2}^\frac12\|f_{\tau_2,j,tang}\|_{L^2}^\frac12,$$ we have
\begin{equation}
	\|\chi_{P_j\cap W}\text{Bil}(e^{it\Delta}f_{j,tang})\|_{L_x^{p}L_t^{\infty}(P_R)}\lesssim_{p,\epsilon} N^{\frac{1}{p}}M^{-\frac12(1-\frac{3}{p})}R^{\frac{\epsilon}{2}}\|f_{\tau_1,j,tang}\|_{L^2}^\frac12\|f_{\tau_2,j,tang}\|_{L^2}^\frac12
\end{equation}
 for $p>3$.
By H$\ddot{\text{o}}$lder's inequality, for any $r>1/\epsilon^4$, we have
\begin{align*}
	\|\chi_{P_j\cap W}\text{Bil}(e^{it\Delta}f_{j,tang})\|_{L_x^{p}L_t^{r}(P_j)}
	& \leq (R/N)^{\frac1r}\|\chi_{P_j\cap W}\text{Bil}(e^{it\Delta}f_{j,tang})\|_{L_x^{p}L_t^{\infty}(P_R)}\\
	& \lesssim_{p,\epsilon} N^{\frac{1}{p}-\frac{1}{r}}M^{-\epsilon^2}R^{\frac{\epsilon}{2}}\|f_{\tau_1,j,tang}\|_{L^2}^\frac12\|f_{\tau_2,j,tang}\|_{L^2}^\frac12.
\end{align*}
Then the bilinear term in \eqref{eq:2.12} can be controlled  by
$$K^{10p}\sum_{j}\|\chi_{P_j\cap W}\text{Bil}(e^{it\Delta}f_{j,tang})\|_{L_x^{p}L_t^{r}(P_j)}^p\lesssim_{p,\epsilon} R^{O(\delta)+\frac{\epsilon p}{2}}\Big(N^{\frac{1}{p}-\frac{1}{r}}M^{-\epsilon^2}\|f\|_{L^2}\Big)^p.$$
It is enough to close the induction.

\subsection{Bilinear tangent estimate}\label{bilinear tangent estimate}
We now give the proof of Proposition \ref{p2.3}. The key point is to convert the  $L^p_xL_t^{\infty}$ estimate to $L_{x,t}^p$ estimate through the dyadic pigeonhole principle and the locally constant property. 

 For simplicity, assume $\|f_i\|_{L^2}=1,\ i=1,2$. Given a dyadic number $H$, set $$ A_H=\Big\{x:(x,t)\in P_R,\sup_{0<t\leq R}|e^{it\Delta}f_1(x)e^{it\Delta}f_2(x)|^{\frac{1}{2}}\sim H\Big\}.$$ It is easy to see that $H\lesssim 1$. The case of $H\leq R^{-100}$ is trivial. It is sufficient to consider the case $R^{-100}\leq H\leq 1$. By the pigeonhole principle, there exists a dyadic number $H$ such that
\begin{equation}\label{e3.17}
	\||e^{it\Delta}f_1e^{it\Delta}f_2|^{\frac{1}{2}}\|_{L_x^{3}L_t^{\infty}(P_R)}\lesssim (\log R)H|A_H|^{\frac13}.
\end{equation}
Since $\supp\hat{f_i}\subset \mathbb{B}^2(\xi_0, \frac{1}{M})\subset \mathbb{B}^2(Ne_1,1)\,(N\gg1)$, the Fourier transform of $e^{it\Delta}f_i$ is supported on a rectangular box of dimensions $$\frac{1}{M}\times\frac{1}{M}\times\frac{N}{M}.$$
By the locally constant property, $|e^{it\Delta}f_1(x)e^{it\Delta}f_2(x)|^{\frac{1}{2}}$ is a essentially constant on rectangular box of dimensions $M\times M\times M/N$ (A rigorous discussion can be found in \cite[Pages 635-636]{dgl17}). We can choose  $X\subset P_R$ the union of the rectangular boxes of dimensions $M\times M\times M/N$ satisfying:	

\begin{itemize}
	\item Each vertical thin tube of dimension $M\times M\times R$ contains at most one rectangular box $M\times M\times M/N$ in $X$;
	\item The projection of $X$ on $\mathbb R^2$ covers $A_H$.
\end{itemize}
By the construction of $X$, 
\begin{equation}\label{e3.18}
	|X|\sim \frac MN |A_H|.
\end{equation}

Decompose $P_R$ into parallelepipeds $P_j$ with size $R^{\frac{1}{2}}$. For any $P_j\cap X\neq \emptyset$, making use of the Galilean transform $\mathcal{G}:(x,t)\rightarrow (\tilde{x},\tilde{t})$:
$$\tilde{t}=t\ \ \text{and}\ \ \ \tilde{x}=x-4\pi tNe_1,$$ 
we obtain 
\begin{equation}\label{eq:2.15}
	\||e^{it\Delta}f_1(x)e^{it\Delta}f_2(x)|^{\frac{1}{2}}\|_{L^6(P_j)}=\||e^{i\tilde{t}\Delta}g_1(\tilde{x})e^{i\tilde{t}\Delta}g_2(\tilde{x})|^{\frac{1}{2}}\|_{L^6(Q_j)}.
\end{equation}
Here $\supp\hat{g_i}\subset \mathbb{B}^2(\tilde{\xi}_0, \frac{1}{M}),\, \tilde{\xi}_0\in \mathbb{B}^2(0,1)$, dist$(\text{supp}\hat{g}_1,\text{supp}\hat{g}_2)\gtrsim \frac{1}{KM}$, and $\|f_i\|_{L^2}=\|g_i\|_{L^2}=1$. The image $\mathcal{G}(P_j)=Q_j$ has side length $R^{\frac12}$ (see Figure 5).
The case $$\||e^{it\Delta}f_1 e^{it\Delta}f_2|^{\frac{1}{2}}\|_{L^6(P_j)}\leq R^{-C}$$ can be ignored for some large constant $C$. 
By the pigeonhole principle,  we can catalogue the $P_j$ according to
\begin{equation}\label{eq:2.16}
	\||e^{it\Delta}f_1e^{it\Delta}f_2|^{\frac{1}{2}}\|_{L^6(P_j)}\sim \gamma,
\end{equation}
with a dyadic parameter $R^{-C}<\gamma<R^C$.
Let $Y=\bigcup_{j=1}^{J}P_j$, we have
\begin{equation}\label{e3.19}
	|X|\lesssim (\log R)|X\cap Y|\lesssim (\log R)JMR.
\end{equation}
By \eqref{e3.17}, \eqref{e3.18}, \eqref{e3.19} and the locally constant property, we have
 \begin{align}\label{e2.20}
 	||e^{it\Delta}f_1 e^{it\Delta}f_2||_{L_x^{3}L_t^{\infty}(P_R)}
 	& \lesssim R^{\epsilon}H|A_H|^{\frac13}\lesssim R^{\epsilon}N^{\frac13}(M^{-1}JR)^{\frac16}H|X|^{\frac16}\nonumber\\
 	& \lesssim R^{\epsilon}N^{\frac13}(M^{-1}JR)^{\frac16}\||e^{it\Delta}f_1e^{it\Delta}f_2|^{\frac{1}{2}}\|_{L^6(Y)}\nonumber\\
 	& \lesssim R^{\epsilon}N^{\frac13}(M^{-1}JR)^{\frac16}\||e^{i\tilde{t}\Delta}g_1(\tilde{x})e^{i\tilde{t}\Delta}g_2(\tilde{x})|^{\frac{1}{2}}\|_{L^6(\tilde{Y})},
 \end{align}
where $\tilde{Y}=\bigcup_{j=1}^{J}Q_j\subset Q_R$. 

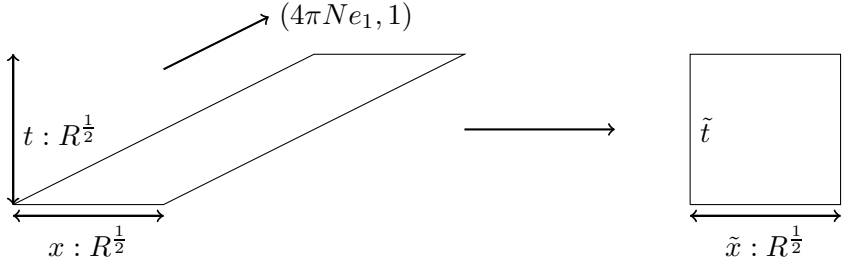
\begin{figure}
	
	\begin{tikzpicture}
		\draw  	(0,0) - - (4,2) - - (6,2) - - (2,0) - - (0,0);
		\draw[->,thick] (6,1) -- (8,1);
		\draw  	(9,0) - - (9,2) - - (11,2) - - (11,0) - - (9,0);
		\draw[< -,thick] (0,0) -- (0,2);
		\draw[ - >,thick] (0,0) -- (0,2);
		\draw[fill] (0,1) node[right] {$t:R^{\frac12}$};
		\draw[< -,thick] (0,-0.15) -- (2,-0.15);
		\draw[ - >,thick] (0,-0.15) -- (2,-0.15);
		\draw[fill] (1,-0.15) node[below] {$x:R^{\frac12}$};
		\draw[ - >,thick] (2,1.8) -- (3.4,2.5);
		\draw[fill] (3.4,2.5) node[right] {$(4\pi Ne_1,1)$};
		\draw[< -,thick] (9,-0.15) -- (11,-0.15);
		\draw[ - >,thick] (9,-0.15) -- (11,-0.15);
		\draw[fill] (10,-0.15) node[below] {$\tilde{x}:R^{\frac12}$};
		\draw[fill] (9,1) node[right] {$\tilde{t}$};
	\end{tikzpicture}
	\caption{$P_j\rightarrow Q_j$}

\end{figure}

We now recall a proposition proved by Du, Guth, and Li \cite[Proposition 8.2]{dgl17}.

\begin{proposition}\cite{dgl17}\label{p2.3x}
Let $M\geq 1$. Suppose the Fourier supports of $g_i\,(i=1,2)$ are in
$\mathbb{B}^2(\xi_0, \frac{1}{M})\subset\mathbb{B}^2(0,1)$ with the distance at least $1/KM$ for some $K=K(\epsilon)$ large enough.  $g_i$ is concentrated in $\mathbb{T}_Z(R)$. Suppose that $Q_1,Q_2,...,Q_J$ are lattice $R^{1/2}$-cubes in $Q_R$ so that
$$\||e^{it\Delta}g_1e^{it\Delta}g_2|^{\frac{1}{2}}\|_{L^6(Q_j)}\ \text{is essentially constant in}\ j.$$
Let $\tilde{Y}:=\bigcup_{j=1}^{J}Q_j$. Then 
\begin{equation}\label{eq:2.18}
	\||e^{it\Delta}g_1e^{it\Delta}g_2|^{\frac{1}{2}}\|_{L^6(\tilde{Y})}\lesssim_{K,\epsilon} M^{\frac{1}{6}}(JR)^{-\frac{1}{6}}R^{\epsilon}\|g_1\|_{L^2}^\frac12\|g_2\|_{L^2}^\frac12.
\end{equation}
\end{proposition}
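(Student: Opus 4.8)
The estimate \eqref{eq:2.18} is precisely \cite[Proposition 8.2]{dgl17}, and I would prove it by reproducing that argument, whose three ingredients are parabolic rescaling, Tao's sharp bilinear restriction estimate for the paraboloid \cite{t03}, and an induction on scale driven by polynomial partitioning that exploits the tangency of the wave packets of $g_1,g_2$ to the two--dimensional variety $Z(P)$.

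\emph{Reductions.} First I would normalise the frequency scale by the parabolic rescaling $\xi\mapsto M(\xi-\xi_0)$: it fixes the paraboloid, sends $\mathbb{B}^2(\xi_0,\tfrac1M)$ to $\mathbb{B}^2(0,1)$, maps $Q_R$ to a box comparable to a ball of radius $R/M^2$ (up to an anisotropy between the space and time directions), maps the lattice $R^{1/2}$--cubes $Q_j$ to essentially lattice $(R/M^2)^{1/2}$--boxes, maps $Z(P)$ to a variety of the same degree, and rescales the tangency angle by a factor $\sim M$ (harmless in the nontrivial range where $M$ is at most a small power of $R$). Carrying the induced change of $\|g_i\|_{L^2}$ and the Jacobian of the $L^6$ change of variables through the computation produces the power $M^{1/6}$ in \eqref{eq:2.18}, and reduces everything to the case $M=1$. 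By dyadic pigeonholing one may moreover assume that the nonzero wave--packet masses $\|g_{i,T}\|_{L^2}$ are all comparable, that $\||e^{it\Delta}g_1e^{it\Delta}g_2|^{1/2}\|_{L^6(Q_j)}$ is genuinely constant in $j$, and that the two (rescaled) frequency supports are $\tfrac1K$--separated subsets of $\mathbb{B}^2(0,1)$.

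\emph{Core estimate.} On each $R^{1/2}$--cube $Q_j$ I would invoke Tao's bilinear restriction estimate at scale $R^{1/2}$ (obtained by rescaling $Q_j$ to the unit ball): since $3>\tfrac{n+3}{n+1}=\tfrac53$ when $n=2$, this yields, up to $R^{\epsilon}$, the local bound $\|e^{it\Delta}g_1e^{it\Delta}g_2\|_{L^3(Q_j)}\lesssim_{K,\epsilon}R^{\epsilon}\|g_1^{(j)}\|_{L^2}\|g_2^{(j)}\|_{L^2}$, where $g_i^{(j)}$ gathers the wave packets of $g_i$ whose tubes meet $Q_j$. Summing in $j$ is the heart of the matter: the naive sum is lossy, and the savings $(JR)^{-1/6}$ must come from the tangency hypothesis. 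Since $Z(P)$ has degree at most a small power of $R$, at scale $R^{1/2}$ it is essentially an affine $2$--plane inside each $Q_j$, so the scale--$R$ tubes tangent to $Z(P)$ that meet $Q_j$ all lie in the $R^{1/2+\delta}$--neighbourhood of a single $2$--plane; this is a planar Kakeya configuration, in which a C\'ordoba--type $L^4$ (reverse square--function) inequality bounds the overlap of the tubes with square--root savings. Combining that with the $L^2$--orthogonality coming from the transversality of the two frequency supports delivers the sparse bound $\||e^{it\Delta}g_1e^{it\Delta}g_2|^{1/2}\|_{L^6(\tilde Y)}\lesssim_{K,\epsilon}(JR)^{-1/6}R^{O(\delta)}\|g_1\|_{L^2}^{1/2}\|g_2\|_{L^2}^{1/2}$ in the $M=1$ normalisation. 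Equivalently, one can run an induction on scale in which a further polynomial partitioning splits the tangent contribution into a cellular part — handled by the inductive hypothesis with strictly fewer wave packets — and a part whose tubes are tangent to a second variety, hence tangent to the one--dimensional intersection of the two varieties, where the Kakeya and bilinear bounds are immediate; in the ambient dimension $n+1=3$ this recursion terminates after $O(1)$ steps.

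\emph{Main obstacle.} The crux is this last step: extracting exactly the exponent $\tfrac16$ in $(JR)^{-1/6}$ requires the precise interaction between the local bilinear estimates on the $R^{1/2}$--cubes, the $L^2$--orthogonality forced by the transversality of the two frequency supports, and the flatness of $Z(P)$ at the intermediate scale, all while keeping the implied constants' dependence on the degree of $P$ and on $K=K(\epsilon)$ uniform through the rescaled induction; the power $M^{1/6}$ is then pinned down by the scaling relation $R\mapsto R/M^2$.
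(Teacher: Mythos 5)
You are right that this paper offers no proof of Proposition \ref{p2.3x}: the estimate \eqref{eq:2.18} is simply imported from \cite[Proposition 8.2]{dgl17}, the only thing checked here being that the hypotheses survive the Galilean transform (in particular the tangent-to-variety condition). So your opening sentence coincides with the paper's own treatment, and for the role the proposition plays in Section \ref{2D case} the citation alone is enough.

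Read as a self-contained argument, however, your sketch has two genuine gaps. First, the reduction to $M=1$ by the rescaling $\xi\mapsto M(\xi-\xi_0)$ does not preserve the hypotheses: in physical space the map is anisotropic, so the lattice $R^{1/2}$-cubes $Q_j$ become boxes of dimensions roughly $R^{1/2}/M\times R^{1/2}/M\times R^{1/2}/M^{2}$ rather than $(R/M^{2})^{1/2}$-cubes, the ``essentially constant in $j$'' hypothesis is therefore not inherited at the new scale, and the tangency angle $R^{-1/2+\delta}$ gets multiplied by $M$, which is not harmless in the range actually needed here: Proposition \ref{p2.3} is invoked with $M$ as large as a power $R^{1/2-\delta}$, for which $MR^{-1/2+\delta}$ is no constraint at all. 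Consequently the factor $M^{1/6}$ cannot be read off from Jacobian bookkeeping; in \cite{dgl17} the parameter $M$ is carried through the argument, the gain reflecting the frequency localization at scale $1/M$ (locally constant property and reduced number of wave-packet directions), exactly the device used in Subsection \ref{bilinear tangent estimate} of this paper. Second, the step you yourself call the crux --- producing precisely the exponent $(JR)^{-1/6}$ from the C\'ordoba-type $L^4$ bi-orthogonality on $R^{1/2}$-cubes, the tube--cube incidence counts, and the fact that tubes through a fixed cube have directions within $R^{-1/2+\delta}$ of the local tangent planes of $Z$ (note: planes, up to the $\deg P$ branches, not a single plane) --- is asserted (``delivers the sparse bound'') rather than carried out, and the alternative recursion via a further polynomial partitioning is only gestured at. Since the exponent $1/6$ is exactly what yields the endpoint $p=3$ downstream, this counting is the substance of the proposition; as written, your proposal is a correct pointer to \cite{dgl17} together with a plausible but incomplete outline of its proof.
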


The constant $K$ in Propositions \ref{p2.3x} arises in a linear-bilinear argument. We assume it is the same constant as in Proposition \ref{t2.3}. It is easy to verify that the tangent-to-variety condition is preserved under the Galilean transform. We use \eqref{eq:2.18} to bound the last term in \eqref{e2.20}, and finish the proof of Proposition \ref{p2.3}.




\section{The critical problem for $n\geq 3$ }\label{3D case}
As  Theorem \ref{t2.2}, we are going to prove the following theorem.
\begin{theorem}\label{t2.2x}
	Let $p\geq\tilde{p}_n$ and $f\in L^2(\mathbb{R}^n)$ with $\text{supp} \hat{f}\subset \mathbb{B}^n(\xi_0, \frac{1}{M})\subset \mathbb{B}^n(Ne_1,1)$ for some $N\geq 1$ and $M\geq 1$. For any $\epsilon>0$ and $R\geq 1$, there exists a constant $C_{p,\epsilon}$ such that
	\begin{equation}\label{ISTE}
		||e^{it\Delta}f||_{L_x^pL_t^{\infty}(P_R)}\leq C_{p,\epsilon} N^{\frac{1}{p}}M^{-\epsilon^2}R^{\epsilon}||f||_{L^2}.
	\end{equation}

\end{theorem}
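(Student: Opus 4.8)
The plan is to prove \eqref{ISTE} by the Bourgain--Guth broad--narrow method combined with Guth's polynomial partitioning, following the same skeleton as the proof of Theorem~\ref{t2.2} but replacing the two-dimensional bilinear tangent estimate by a hierarchy of tangential estimates indexed by the dimension $m$ of the osculating variety, $2\le m\le n$. The whole argument is a double induction: on the spatial scale $R$ and on the frequency radius $1/M$. The endpoint exponent $\tilde p_n=2+\frac4{n+1+s_2^n}$, with $s_2^n=\sum_{i=2}^n\frac1i$, will come out of the arithmetic of this hierarchy.

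\textbf{Step 1: broad--narrow reduction.} Fix a large parameter $K=K(\epsilon)$ and decompose $\mathbb B^n(\xi_0,1/M)$ into $(KM)^{-1}$-caps $\tau$. At each point, either $e^{it\Delta}f$ is \emph{narrow}, in which case it is controlled by $K^{O(1)}\max_\tau|e^{it\Delta}f_\tau|$ and, since each $f_\tau$ has frequency radius $(KM)^{-1}$, the induction hypothesis applies with $\tilde M=KM\ge 2M$; after $\ell^p$-summing the $\tau$-contributions, the shrinkage of the frequency ball gives the factor $K^{-\epsilon^2}$ that closes the $M$-induction. Or it is \emph{broad}, and then $|e^{it\Delta}f|$ is dominated by a $2$-broad norm. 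Everything is thereby reduced to the broad-norm estimate
$$\|e^{it\Delta}f\|_{BL^{p,q}_A(P_R)}\lesssim_\epsilon N^{1/p}M^{-\epsilon^2}R^\epsilon\|f\|_{L^2},\qquad p\ge\tilde p_n,$$
which is taken up in Sections~\ref{3D case} and \ref{Alg}. As in the two-dimensional case, the $M^{-\epsilon^2}$ on the broad side is produced by interpolating the $R^\epsilon$-lossy broad bound with the trivial frequency-localized estimate $\|e^{it\Delta}f_\tau\|_{L^\infty}\lesssim M^{-n/2}\|f_\tau\|_{L^2}$, which gains a power of $M$; thus the gain survives every stage of the induction.

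\textbf{Step 2: polynomial partitioning and the tangential hierarchy.} To prove the broad-norm estimate we induct on $R$. First we convert the mixed norm into an $L^q_{x,t}$ quantity: since $\supp\hat f$ lies in a $(1/M)$-ball inside $\mathbb B^n(Ne_1,1)$, the function $e^{it\Delta}f$ is locally constant on translates of the box $M\times\cdots\times M\times M/N$, which is thinner in time by the factor $1/N$; pigeonholing over the dyadic level sets of $\sup_{0<t\le R}|e^{it\Delta}f|$ and over vertical tubes, exactly as in \eqref{e3.17}--\eqref{e3.19}, reduces matters to an $L^q_{x,t}$ estimate over a controlled union of $R^{1/2}$-cubes, and it is precisely here that the single factor $N^{1/p}$ is born (compare \eqref{e3.18}). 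Now apply the mixed-norm polynomial partitioning theorem (Theorem~\ref{t2.3}) with a polynomial $P$ of degree $D=R^{\epsilon^4}$, producing $\sim D^{n+1}$ cells and a wall $W$, the $R^{1/2+\delta}$-neighbourhood of $Z(P)$ inside $P_R$. In the \emph{cellular} case a tube meets $O(D)$ of the $D^{n+1}$ cells, so a representative cell carries at most a $D^{-n/2}$ fraction of $\|f\|_{L^2}$; as $q\ge\tilde p_n>\frac{2(n+1)}n$ the combinatorial factor $D^{n+1}$ is beaten and the $R$-induction closes after splitting $P_R$ into $O(1)$ parallelepipeds of size $R/2$. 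In the \emph{tangential} case the surviving wave packets are $R^{-1/2+\delta}$-tangent to the $n$-dimensional variety $Z(P)$, and one runs the two algorithms of Hickman--Rogers \cite{hg19}: at each stage they either yield a cellular-type gain or pass to a variety one dimension lower, producing a nested chain $Z_n\supset Z_{n-1}\supset\cdots\supset Z_m$ and rescaling $R$ down to intermediate scales $r\ge R^{1/2}$ (the tangency persists under this rescaling by Lemma~\ref{l2.5}). Each time the packets become tangent to an $m$-dimensional variety, the algorithm invokes the $BL^{2,\infty}_A$ refined Strichartz estimate for functions tangent to an $m$-variety, established in Section~\ref{3D case}, which contributes a gain comparable to $r^{1/(2m)}$ at dimension $m$. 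Summing these contributions over $m=n,n-1,\dots,2$ assembles exactly the quantity $s_2^n=\sum_{i=2}^n\frac1i$; balancing the accumulated exponent against the cellular threshold $\frac{2(n+1)}n$ is what forces $q\ge\tilde p_n$.

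\textbf{The main obstacle.} I expect the tangential hierarchy to be the crux. One has to check, at every intermediate dimension $m$, that the correct $BL^{2,\infty}_A$ input is available with packets honestly tangent to $Z_m$ after the rescaling of Lemma~\ref{l2.5}, that the counting parameter $A$ can be chosen large at each stage without breaking the broad--narrow reduction, and --- most delicately --- that the exponents accumulated along the chain $Z_n\supset\cdots\supset Z_2$ add up to the precise sum $s_2^n$ and not to anything larger, so that the endpoint $p=\tilde p_n$ is reached rather than only $p>\tilde p_n+c$. This is the substance of the two Hickman--Rogers algorithms carried out in Section~\ref{Alg}; the remaining ingredients --- the locally constant reduction to $L^q_{x,t}$, the narrow-case $M$-induction, the cellular-case $R$-induction, and the appearance of the single factor $N^{1/p}$ from the time-thinness of the box --- are essentially the same as in the two-dimensional argument of Section~\ref{2D case}.
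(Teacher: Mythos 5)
Your high-level strategy — reduce $L^p_xL^\infty_t$ to a $2$-broad $BL^{p,q}_A$ estimate via a broad--narrow decomposition with the narrow part handled by induction on the frequency radius, then attack the broad norm with the two Hickman--Rogers algorithms and a tangential $BL^{2,q}_A$ input, accumulating $s_2^n=\sum_{i=2}^n\frac1i$ from the dimension hierarchy — is indeed the paper's plan. But your Step 2 conflates two different reductions and, as written, would not carry through. You propose to first convert the broad norm to an $L^q_{x,t}$ quantity (by the locally-constant/pigeonhole argument as in \eqref{e3.17}--\eqref{e3.19}) and then apply mixed-norm polynomial partitioning; this is internally inconsistent (after converting to $L^q_{x,t}$ there is no mixed norm to partition) and, more importantly, discards the broad structure, which the Hickman--Rogers algorithms (Proposition~\ref{p3.4} and Theorem~\ref{t3.2}) operate on directly. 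In the paper the polynomial partitioning is applied to $BL^{p,q}_A$ itself; the pigeonhole conversion to $L^{p_m}_{x,t}$ occurs only inside the proof of Proposition~\ref{p3.1}, i.e.\ in establishing the $BL^{2,q}_A$ tangential input used by the algorithms. Likewise the factor $N^{1/p}$ is born not in that pigeonhole but in the passage between $L^p_xL^\infty_t$ and the broad norm (Lemma~\ref{l3.1} and its converse in the reduction): the $t$-intervals in the broad norm have length $K/N$, so by the locally-constant property a factor $N^{\pm1/p}$ is exchanged in each direction.

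The other deviation is your mechanism for producing $M^{-\epsilon^2}$ on the broad side. You propose interpolating with a frequency-localized $L^\infty$ bound, as in the $n=2$ argument. The paper instead dispatches the regime $M\ge R^{1/2-\delta}$ trivially and then, for $M<R^{1/2-\delta}$, simply absorbs $M^{-\epsilon^2}$ into the $R^\epsilon$ loss: $R^{\epsilon/2}\le R^\epsilon R^{-\epsilon^2/2}\le R^\epsilon M^{-\epsilon^2}$. This means the algorithms never track any $M$-dependence at all — they are run with $\mathrm{supp}\,\hat f\subset\mathbb B^n(Ne_1,1)$ and a pure $R^\epsilon$ loss, which is considerably cleaner than carrying an $M$-gain through the recursion. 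Your interpolation route would require an $M$-gaining $BL^{p_1,q}_A$ estimate through Proposition~\ref{p3.2}(iii), and it is not clear it survives the full algorithm; the paper avoids the issue entirely. Fix these two points and your outline coincides with the paper's proof.
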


In this section, we will introduce a $2$-broad norm $BL_A^{p,\infty}$ and collect several properties of it. Then we reduce Theorem \ref{t2.2x} to Theorem \ref{t2.3x}, whose proof will be postponed to Section \ref{Alg}. Another aim is  an $L^2_xL^\infty_t$ estimate which is also an ingredient of the induction argument.

\subsection{ \textbf{$2$-broad norm} $BL_{A}^{p,\infty}$ }

Let parameter $K\gg1$. We decompose 
$\mathbb{B}^n(\xi_0, \frac{1}{M})$ into balls $\tau$ of radius $(KM)^{-1}$ and $$f=\sum_{\tau}f_{\tau},\,\quad\text{supp}\hat{f_\tau}\subset \tau.$$
Let $$G(\tau):=\{G(\xi)\,|\,\xi\in\tau\}\quad \text{with}\quad  G(\xi)=\frac{(4\pi\xi,1)}{|(4\pi\xi,1)|}.$$
Given a linear subspace $V\subset\mathbb{R}^{n+1}$, we denote $\tau\in V$ if and only if $$\angle(G(\tau),V)<(KM)^{-1},$$ where
$$\angle(G(\tau),V)=\min\{\angle(v,v^{\prime}):\ \text{non-zero vectors}\  v,v^{\prime}\ \text{satisfy}\  v\in V\ \text{and}\ v^{\prime}\in G(\tau)\}.$$
 Otherwise, we write $\tau\notin V$. The cube $Q_R$ is decomposed into sub-cubes $Q_K$ of side length $K$. And the time interval $[0,R]$ is also decomposed into intervals $\ I_{K/N}$ of length $K/N$. Denote $$D_K:=\{x:(x,t)\in\mathcal{G}^{-1}(B_K\times I_{K/N})\},$$ then we obtain a finite overlap decomposition
$$P_R=\bigcup D_K\times I_{K/N}.$$ Given a positive integer $A$, we define
$$\mu_{e^{it\Delta}f}(D_K\times I_{K/N}):=\min_{V_1,...V_A}\Big(\max_{\tau\notin V_a\ \text{for all}\ a} \int_{D_K\times I_{K/N}}|e^{it\Delta}f_{\tau}|^pdxdt\Big),$$
where $V_1,...,V_A$ are \textbf{lines} of $\mathbb{R}^{n+1}$.  For any subset $U\subset P_R$, the $2$-broad norm $BL_{A}^{p,\infty}(U)$ is defined by
$$\|e^{it\Delta}f\|_{BL_A^{p,\infty}(U)}^p:=\sum_{D_K}\max_{I_{K/N}}\frac{|U\cap(D_K\times I_{K/N})|}{|D_K\times I_{K/N}|}\mu_{e^{it\Delta}f}(D_K\times I_{K/N}).$$

The $BL_A^{p,\infty}$-norm is not continuous, thus it can not be used to run the polynomial partitioning argument. For this purpose, we recall the $BL_A^{p,q}$-norm defined as
$$\|e^{it\Delta}f\|_{BL_A^{p,q}(U)}^p:=\sum_{D_K}\Big[\sum_{I_{K/N}}\Big(\frac{|U\bigcap(D_K\times I_{K/N})|}{|D_K\times I_{K/N}|}\mu_{e^{it\Delta}f}(D_K\times I_{K/N})\Big)^{\frac qp}\Big]^{\frac pq}.$$

The $BL_A^{p,q}$ norm has the following properties.
\begin{proposition}\cite{dl16,G18}\label{p3.2}
Let $1\leq p\leq q<\infty$. The following properties hold:
\begin{itemize}
	\item[\rm{(i)}] Let $U_1,U_2\subset\mathbb{R}^{n+1}$ and $A\in\mathbb{N}$. Then
	$$\|e^{it\Delta}f\|_{BL_A^{p,q}(U_1\cup U_2)}^p\leq \|e^{it\Delta}f\|_{BL_A^{p,q}(U_1)}^p+\|e^{it\Delta}f\|_{BL_A^{p,q}( U_2)}^p.$$
	\item[\rm{(ii)}] Let $U\subset\mathbb{R}^{n+1}$ and $A\in\mathbb{N}$. Then
	$$\|e^{it\Delta}(f_1+f_2)\|_{BL_{2A}^{p,q}(U)}\leq \|e^{it\Delta}f_1\|_{BL_{A}^{p,q}(U)}+\|e^{it\Delta}f_2\|_{BL_{A}^{p,q}(U)}$$
	\item[\rm{(iii)}] Let $U\subset\mathbb{R}^{n+1}$, $1\leq p,p_0,p_1<\infty$ and $A\in\mathbb{N}$. Suppose that $0\leq \alpha\leq 1$ satisfies
	$$\frac 1p=\frac{1-\alpha}{p_0}+\frac{\alpha}{p_1}.$$
	Then
	$$\|e^{it\Delta}f\|_{BL_{2A}^{p,q}(U)}\lesssim\|e^{it\Delta}f\|_{BL_{A}^{p_0,q}(U)}^{1-\alpha}\|e^{it\Delta}f\|_{BL_{A}^{p_1,q}(U)}^{\alpha}.$$
\end{itemize}

\end{proposition}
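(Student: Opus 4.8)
The plan is to treat all three assertions as formal consequences of the definition of $\|\cdot\|_{BL_A^{p,q}}$ as a mixed $\ell^p_{D_K}\big(\ell^q_{I_{K/N}}\big)$ aggregate of the plate quantities $\mu_{e^{it\Delta}f}(D_K\times I_{K/N})$; beyond Minkowski's and H\"older's inequalities the only input is the fact that enlarging the family of forbidden lines only decreases $\mu$, which is exactly what forces the passage from $A$ to $2A$ in (ii) and (iii). The first step is to record this input at the level of a single plate $D_K\times I_{K/N}$. For (ii): choosing $2A$ lines equal to the union of the $A$ lines (essentially) optimal for $f_1$ and the $A$ lines (essentially) optimal for $f_2$, any $\tau$ avoiding all $2A$ of them avoids both families, so the $L^p(D_K\times I_{K/N})$-triangle inequality applied to $e^{it\Delta}(f_1+f_2)_\tau=e^{it\Delta}f_{1,\tau}+e^{it\Delta}f_{2,\tau}$ gives $\mu_{e^{it\Delta}(f_1+f_2)}^{1/p}\le\mu_{e^{it\Delta}f_1}^{1/p}+\mu_{e^{it\Delta}f_2}^{1/p}$ on that plate. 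For (iii): the same device with H\"older's inequality $\|e^{it\Delta}f_\tau\|_{L^p}\le\|e^{it\Delta}f_\tau\|_{L^{p_0}}^{1-\alpha}\|e^{it\Delta}f_\tau\|_{L^{p_1}}^{\alpha}$ in place of the triangle inequality, testing the level-$2A$ minimum against the union of the $A$ lines optimal for $p_0$ and the $A$ lines optimal for $p_1$, gives $\mu_{e^{it\Delta}f;\,p,\,2A}^{1/p}\le\mu_{e^{it\Delta}f;\,p_0,\,A}^{(1-\alpha)/p_0}\,\mu_{e^{it\Delta}f;\,p_1,\,A}^{\alpha/p_1}$ on each plate.

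Item (i) I would dispose of first and separately, since it needs no doubling of $A$: $\mu_{e^{it\Delta}f}(D_K\times I_{K/N})$ is independent of $U$, while $|(U_1\cup U_2)\cap(D_K\times I_{K/N})|\le|U_1\cap(D_K\times I_{K/N})|+|U_2\cap(D_K\times I_{K/N})|$, so the plate summands satisfy $c_{I}\le a_{I}+b_{I}$ in the obvious notation; since $q/p\ge1$, Minkowski's inequality in $\ell^{q/p}$ over the intervals $I_{K/N}$ followed by termwise summation over the cubes $D_K$ yields (i).

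For (ii) and (iii) I would then assemble the global norm from the plate estimates in two stages. Stage one: for each cube $D_K$ form the $\ell^q_{I_{K/N}}$-norm of the sequence $\big(\big(\tfrac{|U\cap(D_K\times I_{K/N})|}{|D_K\times I_{K/N}|}\mu_{e^{it\Delta}f_i}(D_K\times I_{K/N})\big)^{1/p}\big)_{I_{K/N}}$; the plate estimate above yields, for each $D_K$, that the corresponding $\ell^q$-norm for $f_1+f_2$ is bounded by the sum of those for $f_1$ and $f_2$ in case (ii), and --- via H\"older in $\ell^q_{I_{K/N}}$ together with $\tfrac{1}{p}=\tfrac{1-\alpha}{p_0}+\tfrac{\alpha}{p_1}$, which also lets one split the density factor $\tfrac{|U\cap\cdot|}{|\cdot|}$ --- by $A_{D_K}^{1-\alpha}B_{D_K}^{\alpha}$ in case (iii). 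Stage two: an application of Minkowski's inequality in $\ell^p$ over $D_K$ finishes (ii), while for (iii) one instead applies H\"older's inequality in $\ell^p$ over $D_K$ with the exponents $\tfrac{p_0}{(1-\alpha)p}$ and $\tfrac{p_1}{\alpha p}$, which are conjugate precisely because of the hypothesis relating $p,p_0,p_1$, and which recombine $\sum_{D_K}A_{D_K}^{p_0}$ and $\sum_{D_K}B_{D_K}^{p_1}$ into $\|e^{it\Delta}f\|_{BL_A^{p_0,q}(U)}$ and $\|e^{it\Delta}f\|_{BL_A^{p_1,q}(U)}$ with the right exponents.

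I do not anticipate a genuine obstacle: the proposition is bookkeeping resting on the single structural fact that merging two families of $A$ forbidden lines into one family of $2A$ is harmless for $\mu$. The only point demanding care is keeping the H\"older exponents consistent through the iterated $\ell^q$-over-$\ell^p$ structure in (iii) so that $\tfrac{1}{p}=\tfrac{1-\alpha}{p_0}+\tfrac{\alpha}{p_1}$ is used at both the interval level and the cube level; and one should record that $p\le q$ is exactly what legitimises the Minkowski steps in $\ell^{q/p}$ and $\ell^{q}$.
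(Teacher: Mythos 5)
Your proposal is correct, and since the paper merely cites \cite{dl16,G18} for this proposition without reproducing a proof, your argument supplies exactly the expected verification: the single plate-level observation that testing the $2A$-minimum against the union of the two near-optimal $A$-families gives $\mu^{1/p}_{2A,f_1+f_2}\le\mu^{1/p}_{A,f_1}+\mu^{1/p}_{A,f_2}$ (and its H\"older analogue for (iii)), followed by the routine assembly through the mixed $\ell^p_{D_K}(\ell^q_{I_{K/N}})$ structure. All the bookkeeping is right: for (i) the factor $|U\cap\cdot|/|\cdot|$ is the only $U$-dependent piece and is subadditive, so Minkowski in $\ell^{q/p}$ (valid since $q\ge p$) does the job; for (iii) the identity $\frac{1}{p}=\frac{1-\alpha}{p_0}+\frac{\alpha}{p_1}$ is used both to split the density factor and to make $\frac{p_0}{(1-\alpha)p}$ and $\frac{p_1}{\alpha p}$ a conjugate pair for the outer H\"older over $D_K$ (and indeed $p_0\ge(1-\alpha)p$, $p_1\ge\alpha p$ follow from the same identity, so the exponents are legitimate), while the inner $\ell^q_{I}$ step only needs H\"older with exponents $\frac{1}{1-\alpha},\frac{1}{\alpha}$. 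One small presentational caution: in your description of the stage-one H\"older for (iii) you bundle the $p$-relation in with the $\ell^q$-H\"older; in fact the $\ell^q$-H\"older itself uses only $\alpha$ and $1-\alpha$, and the $p$-relation enters separately to split the density factor and to fix the outer exponents, which is worth saying explicitly to avoid confusion.
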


Moreover, $BL_A^{p,q}$ estimate can be reduced to $L_x^pL_t^{\infty}$ estimate. 
\begin{lemma}\label{l3.1}
	Let $p\geq 2$, $\epsilon>0$, and $f\in L^2(\mathbb{R}^n)$ with $\text{supp} \hat{f}\subset \mathbb{B}^n(\xi_0, \frac{1}{M})\subset \mathbb{B}^n(Ne_1,1)$. If
	$$||e^{it\Delta}f||_{L_x^pL_t^{\infty}(P_R)}\lesssim_{p,\epsilon} N^{\frac{1}{p}}W||f||_{L^2}$$
	holds for some $W=W(M,R)>0$, then
	$$\|e^{it\Delta}f\|_{BL_A^{p,\infty}(P_R)}\lesssim_{p,\epsilon} K^{O(1)}W\|f\|_{L^2}$$
	and 
	$$\|e^{it\Delta}f\|_{BL_A^{p,q}(P_R)}\lesssim_{p,\epsilon} K^{O(1)}W\|f\|_{L^2}$$
	hold for any $q>1/\epsilon^4$.
\end{lemma}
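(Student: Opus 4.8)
The plan is to deduce the $BL_A^{p,\infty}$ bound directly from the hypothesized $L_x^pL_t^\infty$ estimate, and then obtain the $BL_A^{p,q}$ bound by interpolating with a trivial $BL_A^{p,1}$ (or $L^1$-type) estimate, using part (iii) of Proposition \ref{p3.2}. The starting observation is the defining inequality
$$\|e^{it\Delta}f\|_{BL_A^{p,\infty}(P_R)}^p=\sum_{D_K}\max_{I_{K/N}}\frac{|P_R\cap(D_K\times I_{K/N})|}{|D_K\times I_{K/N}|}\,\mu_{e^{it\Delta}f}(D_K\times I_{K/N}),$$
so I only need to control each $\mu_{e^{it\Delta}f}(D_K\times I_{K/N})$. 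By definition $\mu$ is a min over choices of lines $V_1,\dots,V_A$ of a max over $\tau\notin V_a$ of $\int_{D_K\times I_{K/N}}|e^{it\Delta}f_\tau|^p$; in particular, dropping the minimum, $\mu$ is at most $\max_{\tau}\int_{D_K\times I_{K/N}}|e^{it\Delta}f_\tau|^p$. So the right side is bounded by $\sum_{D_K}\max_{I_{K/N}}\max_\tau \int_{D_K\times I_{K/N}}|e^{it\Delta}f_\tau|^p$, which after crudely bounding $\max_\tau\le\sum_\tau$ and $\max_{I_{K/N}}\le\sum_{I_{K/N}}$ and recombining the finitely-overlapping pieces $D_K\times I_{K/N}$ into $P_R$ is at most $\sum_\tau\|e^{it\Delta}f_\tau\|_{L_x^pL_t^\infty(P_R)}^p$ up to the overlap constant.

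Next I apply the hypothesis to each $f_\tau$: since $\supp\hat f_\tau\subset\tau\subset\mathbb B^n(\xi_0,1/M)\subset\mathbb B^n(Ne_1,1)$, the hypothesis gives $\|e^{it\Delta}f_\tau\|_{L_x^pL_t^\infty(P_R)}\lesssim_{p,\epsilon} N^{1/p}W\|f_\tau\|_{L^2}$. Raising to the $p$-th power and summing over the $\tau$, using almost-orthogonality $\sum_\tau\|f_\tau\|_{L^2}^2\lesssim\|f\|_{L^2}^2$ together with $p\ge 2$ so that $\ell^p\subset\ell^2$ (hence $\sum_\tau\|f_\tau\|_{L^2}^p\le(\sum_\tau\|f_\tau\|_{L^2}^2)^{p/2}\lesssim\|f\|_{L^2}^p$), yields
$$\sum_\tau\|e^{it\Delta}f_\tau\|_{L_x^pL_t^\infty(P_R)}^p\lesssim_{p,\epsilon} N\,W^p\sum_\tau\|f_\tau\|_{L^2}^p\lesssim_{p,\epsilon} N\,W^p\|f\|_{L^2}^p.$$
The factor $N$ here cancels against the normalization built into the $2$-broad norm: the quotient $|P_R\cap(D_K\times I_{K/N})|/|D_K\times I_{K/N}|\le 1$, but more importantly the slab $D_K\times I_{K/N}$ has $t$-length $K/N$ while one also sums over only $\sim R/(K/N)=RN/K$ such slabs in time; tracking these volume factors (which is exactly why the $BL$-norm is normalized the way it is) absorbs the $N$ and leaves $\|e^{it\Delta}f\|_{BL_A^{p,\infty}(P_R)}\lesssim_{p,\epsilon} K^{O(1)}W\|f\|_{L^2}$, where the $K^{O(1)}$ collects the bounded-overlap constants and the number $\#\{\tau\}\lesssim(KM)^{n}\cdot(M)^{\,\cdot}$-type factors coming from replacing $\max_\tau$ by $\sum_\tau$ (in fact one should be slightly more careful and keep $\max_\tau$, estimating $\#$ of $\tau$'s as $\lesssim K^{O(1)}$ after quotienting by $M$, to avoid an $M$-loss — this is the one place requiring attention).

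For the $BL_A^{p,q}$ statement with $q>1/\epsilon^4$, I would invoke Proposition \ref{p3.2}(iii) with $p_0=p$, $q$ fixed, and a second exponent chosen so that the $BL_A^{p_1,q}$ norm is controlled trivially; alternatively, and more simply, note that for $q=\infty$ we have just proved the bound, for $q=p$ the $BL_A^{p,p}$ norm is pointwise comparable to a sum that is again dominated by $\sum_\tau\|e^{it\Delta}f_\tau\|_{L^p}^p$ up to $K^{O(1)}$ and the same volume bookkeeping, and then log-convexity of $\ell^q$ norms in $q$ interpolates the intermediate range $p<q<\infty$ (the restriction $q>1/\epsilon^4$ only serves later parts of the paper and costs nothing here). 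The main obstacle — really the only subtle point — is the bookkeeping of the volume normalization factors $|D_K\times I_{K/N}|$ and the count of time-slabs, making sure the factor $N^{1/p}$ in the hypothesis is exactly neutralized and that replacing $\mu$ (a min-max) by a plain sum over $\tau$ costs only $K^{O(1)}$ rather than a power of $M$; everything else is routine summation and almost-orthogonality.
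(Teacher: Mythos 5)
\textbf{There is a genuine gap in your argument, at the step where you replace $\max_{I_{K/N}}$ by $\sum_{I_{K/N}}$.} After that substitution, reassembling the finitely-overlapping tiles $D_K\times I_{K/N}$ gives $\sum_\tau\|e^{it\Delta}f_\tau\|_{L^p_{x,t}(P_R)}^p$ — the full space-time $L^p$ norm — \emph{not} the mixed norm $\sum_\tau\|e^{it\Delta}f_\tau\|_{L_x^pL_t^\infty(P_R)}^p$ that you then try to estimate. The inequality $\|g\|_{L^p_t([0,R])}\leq\|g\|_{L^\infty_t([0,R])}$ is false: one picks up the factor $R^{1/p}$, which is fatal here. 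Because of this, the factor of $N$ that your hypothesis produces (from $N^{1/p}$ raised to the $p$-th power after summing in $\tau$) really is left over, and the "tracking volume factors absorbs the $N$" sentence does not resolve it — the normalization $|P_R\cap(D_K\times I_{K/N})|/|D_K\times I_{K/N}|$ is at most $1$ and is simply dropped, so it cannot contribute any negative power of $N$.

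The correct route, which is what the paper does, is to \emph{keep} the $\max$ over $I_{K/N}$ and exploit the shortness of the time window. For a fixed $D_K$ and a single interval $I_{K/N}$ of length $K/N$, bound
$$\int_{D_K\times I_{K/N}}|e^{it\Delta}f_\tau|^p\,dx\,dt\;\leq\;\frac{K}{N}\int_{D_K}\sup_{t\in[0,R]}|e^{it\Delta}f_\tau|^p\,dx.$$
Taking $\max$ over $I_{K/N}$ (not sum), then summing in $D_K$ (the $D_K$ tile $x$-space with finite overlap), and summing in $\tau$ yields
$$\|e^{it\Delta}f\|_{BL_A^{p,\infty}(P_R)}^p\;\lesssim\;\frac{K^{O(1)}}{N}\sum_\tau\|e^{it\Delta}f_\tau\|_{L_x^pL_t^\infty(P_R)}^p.$$
The factor $N^{-1}$ here is exactly what cancels the $N$ from the hypothesis, and now your orthogonality step $\sum_\tau\|f_\tau\|_{L^2}^p\leq\|f\|_{L^2}^p$ (valid for $p\geq 2$) closes the estimate. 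Incidentally, your "one place requiring attention" — the count of $\tau$'s and the $M$-dependence — is not actually where the attention is needed: bounding $\max_\tau$ by $\sum_\tau$ and using $\ell^2$-orthogonality costs nothing. For the $BL_A^{p,q}$ statement, one then passes from $\max_{I_{K/N}}$ to the $\ell^{q/p}$ norm at a cost of $(\#\{I_{K/N}\})^{p/q}\lesssim R^{O(\epsilon^4)}$ since $q>1/\epsilon^4$, which is harmless given the form of $W$; no interpolation with a $BL_A^{p,1}$ estimate is needed.
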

\begin{proof}
	By the definition of $BL_A^{p,\infty}$, we have 
	\begin{align*}
		\|e^{it\Delta}f\|_{BL_A^{p,\infty}(P_R)}^p
		&=\sum_{D_K}\max_{I_{K/N}}\mu_{e^{it\Delta}f}(D_K\times I_{K/N})\\
		&\leq \sum_{D_K}\max_{I_{K/N}}\sum_{\tau}\|e^{it\Delta}f_{\tau}\|_{L^p(D_K\times I_{K/N})}^p\\
		& \leq N^{-1}K^{O(1)}\sum_{\tau}||e^{it\Delta}f_{\tau}||_{L_x^pL_t^{\infty}(P_R)}^p\\
		&\lesssim_{p,\epsilon} N^{-1}K^{O(1)}\sum_{\tau}\Big(N^{\frac{1}{p}}M^{-\epsilon^2}R^{\epsilon}||f_{\tau}||_{L^2}\Big)^p\\
		&\lesssim_{p,\epsilon} K^{O(1)}\Big(W||f||_{L^2}\Big)^p.
	\end{align*}
	Since $q>1/\epsilon^4$ and the number of $I_{K/N}$ is at most $R/K$, we have $$\|e^{it\Delta}f\|_{BL_A^{p,q}(P_R)}\lesssim_{p,\epsilon} K^{O(1)}W\|f\|_{L^2}.$$ 
\end{proof}
	
\subsection{A reduction argument}
In this subsection, we will show that Theorem \ref{t2.2x} can be reduced to the following theorem.
\begin{theorem}\label{t2.3x}
Let $p\geq \tilde{p}_n$ and $f\in L^2(\mathbb{R}^n)$ with $\text{supp} \hat{f}\subset \mathbb{B}^n(\xi_0, \frac{1}{M})\subset \mathbb{B}^n(Ne_1,1)$ for some $N\geq 1$ and $M\geq 1$. For any $\epsilon>0$ and $R\geq 1$, we have	$$\|e^{it\Delta}f\|_{BL_A^{p,\infty}(P_R)}\lesssim_{p,\epsilon} M^{-\epsilon^2}R^{\epsilon}\|f\|_{L^2}.$$

\end{theorem}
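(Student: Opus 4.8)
The plan is to establish Theorem \ref{t2.3x} by a simultaneous induction on the physical scale $R$ and on the reciprocal frequency radius $1/M$, running Guth's polynomial partitioning method in its broad-norm, mixed-norm form and organising the recursion through the two algorithms of Hickman and Rogers \cite{hg19}. First I would reduce to the generic range of parameters, exactly as for Theorem \ref{t2.2}: after the Galilean normalisation one works on a cube of side $R$ in place of $P_R$; the case $R\lesssim1$ is immediate, the case $M>R^{10}$ is trivial, and the intermediate range $R^{1/2-\delta}\le M\le R^{10}$ follows from counting the $O(R^{O(\delta)})$ wave packets of $\hat f$ as in \eqref{r1.1}. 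Thus the induction runs over $R\gg1$, $M\ll R^{1/2}$, under the hypothesis that the bound holds for every pair $(\tilde R,\tilde M)$ with $\tilde R\le R/2$ or $\tilde M\ge 2M$. By Lemma \ref{l3.1} applied at the smaller scales it is enough to produce the broad bound with $W=M^{-\epsilon^2}R^\epsilon$.

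Next I would apply the mixed-norm polynomial partitioning of Theorem \ref{t2.3} with $r=\infty$ at degree $D=R^{\epsilon^4}$, obtaining $\sim D^{n+1}$ cells together with the wall $W=N_{R^{1/2+\delta}}Z(P)$, and distribute $\|e^{it\Delta}f\|_{BL_A^{p,\infty}(P_R)}^p$ over them by Proposition \ref{p3.2}(i). In the \emph{cellular} case, since every tube of $\mathbb T[R]$ meets only $O(D)$ cells, $L^2$-orthogonality gives $\sum_i\|f_i\|_{L^2}^2\lesssim D\|f\|_{L^2}^2$, hence a cell with $\|f_{i_0}\|_{L^2}^2\lesssim D^{-n}\|f\|_{L^2}^2$; covering $P_R$ by $O(1)$ parallelepipeds of size $R/2$ and applying the induction hypothesis at scale $R/2$ to $f_{i_0}$, the cellular contribution is $\lesssim D^{\,n+1}D^{-np/2}\,2^{-\epsilon p}(W\|f\|_{L^2})^p$, which closes the induction because $p\ge\tilde p_n$ forces $p\ge 2+\tfrac2n$ (equivalently $4n\ge 2(n+1+s_2^n)$, i.e. $n\ge 1+s_2^n$). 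In the \emph{wall} case I would pass to parallelepipeds of size $R^{1-\delta}$ and split the relevant tubes into those tangent and those transverse to $Z(P)$: for the transverse tubes Guth's transversality bound limits the number of such parallelepipeds a fixed tube occurs in to $R^{O(\epsilon^4)}$, so the $L^2$-mass inflation is absorbed by the $\delta\epsilon$ gain in $R^{(1-\delta)\epsilon}$ and the induction hypothesis at scale $R^{1-\delta}$ closes this branch, just as in the proof of Theorem \ref{t2.2}.

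The tangent contribution is where the real work lies, and I expect it to be the main obstacle. Here I would run the Hickman--Rogers second algorithm \cite{hg19}: starting from grain dimension $m=n$, the broad mass concentrated on the wall is at each stage either cellular-dominated (dispatched by the induction hypothesis) or concentrates on data tangent to an $(m-1)$-dimensional subvariety, so after at most $n-1$ descents one reaches the base input, namely the $BL_A^{2,\infty}$ estimate for data tangent to an $m$-variety announced above, which is itself proved by the same partitioning scheme together with the refined and bilinear Strichartz inputs of Du--Guth--Li \cite{dgl17}. The dimensional losses incurred at the levels $m=n,n-1,\dots,2$ add up precisely to $s_2^n=\sum_{i=2}^n\frac1i$, and feeding them through the interpolation inequality of Proposition \ref{p3.2}(iii) produces the threshold $\tilde p_n=2+\frac{4}{n+1+s_2^n}$. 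The delicate points will be proving the nested $BL_A^{2,\infty}$ estimate with the sharp dependence on $M$ and on the number of participating $R^{1/2}$-caps, reconciling the polynomial-partitioning recursion with the $K$-ball decomposition defining the broad norm, and---the genuinely new feature relative to \cite{dgl17} and \cite{dz18}---propagating the frequency gain $M^{-\epsilon^2}$ through \emph{every} branch of the recursion, which forces one to check that the tangency and transversality conditions, the wave-packet rescaling lemma, and the Galilean normalisation all remain compatible under passage to smaller physical scales and smaller frequency caps.
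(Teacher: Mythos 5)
Your high‑level outline shares the skeleton of the paper's proof — polynomial partitioning organised by the Hickman–Rogers two‑algorithm scheme, dimensional descent down to $2$‑varieties, the $BL_A^{2,q}$ input of Theorem~\ref{c3.1}, and interpolation via Proposition~\ref{p3.2}(iii) — but several of the steps you describe would not actually work as written, and the paper's route around them is part of what makes the argument go.

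First, you propose to run ``the mixed-norm polynomial partitioning of Theorem~\ref{t2.3} with $r=\infty$'' directly on $\|e^{it\Delta}f\|_{BL_A^{p,\infty}(P_R)}$. This fails for exactly the reason the paper states at the start of Section~\ref{3D case}: the $BL_A^{p,\infty}$-norm is not continuous and cannot be partitioned. The paper instead observes that, because there are only $O(R/K)$ time intervals $I_{K/N}$, the $\ell^q$ and $\ell^\infty$ aggregations over these intervals differ by a factor $\lesssim_\epsilon 1$ once $q>1/\epsilon^4$, so one may replace $BL_A^{p,\infty}$ by $BL_A^{p,q}$ for some fixed large finite $q$ (this is the content of Lemma~\ref{l3.1} and the reduction \eqref{e3.14}$\Rightarrow$\eqref{e3.13}). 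The partitioning is then done by Theorem~\ref{t3.2}, which is a broad‑norm statement tailored to $BL_A^{p,q}$ — not by Theorem~\ref{t2.3}, which is an $L^1_xL^r_t$ statement for finite $r$ applied pointwise to $e^{it\Delta}f$. Relatedly, your invocation of Lemma~\ref{l3.1} is backwards: it converts an $L^p_xL^\infty_t$ estimate into a $BL_A$ estimate, so it cannot be used to ``produce the broad bound'' from an inductive hypothesis for the broad bound itself.

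Second, the joint induction in $R$ and $1/M$ that you set up is not how Theorem~\ref{t2.3x} is proved and is in fact unnecessary. The paper handles $M\geq R^{1/2-\delta}$ by a direct wave-packet count (as in \eqref{r1.1}) together with Lemma~\ref{l3.1}, and for $M<R^{1/2-\delta}$ it simply trades $R^{\epsilon/2}\leq R^\epsilon M^{-\epsilon^2}$, reducing at one stroke to the $M=1$ case $\|e^{it\Delta}f\|_{BL_A^{p,q}(P_R)}\lesssim R^\epsilon\|f\|_{L^2}$. No $M$-induction enters the $BL$-estimate at all; the frequency‑radius induction appears only later, in the reduction $\text{Theorem~\ref{t2.3x}}\Rightarrow\text{Theorem~\ref{t2.2x}}$, where the narrow term $II$ is bounded using the inductive hypothesis at frequency scale $(KM)^{-1}$. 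Likewise, the algorithmic proof of \eqref{e3.13} contains no induction on the spatial scale: [alg~1] and [alg~2] push scales down to $R^{\delta_0}$ within a single run, and the endgame is a trivial estimate on cells of size $R^{\delta_0}$, with all multiplicative losses controlled through \eqref{e3.12} and Properties~1–3. Finally, your claim that ``the dimensional losses add up precisely to $s_2^n$'' is asserted rather than derived; the actual threshold $\tilde p_n$ falls out of the constraints $X_i\leq 0$ and $Y_{i-1}\leq 0$ in \eqref{e3.9}–\eqref{e3.7}, together with the choice $(1/2-1/p_k)^{-1}=2k$ propagated through $(1/2-1/p_{i+1})^{-1}=(1/2-1/p_i)^{-1}+\frac{i+1}{i}$ and the worst case $k=2$, which is the genuine technical core you would need to supply.
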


\begin{proof}[Theorem \ref{t2.3x} $\Rightarrow$ Theorem \ref{t2.2x}]

When $M\geq R^{\frac{1}{2}-\delta}$, we take the wave packet decomposition at scale $R$. Since supp$\hat{f}\subset B(\xi_0,\frac{1}{M})$, we have
$$\sharp\Big\{\theta: \exists \nu\ s.t.\ (\theta,\nu)\in \mathbb{T}\Big\}\leq \frac{(1/M)^n}{(1/R)^{n/2}}\lesssim R^{O(\delta)}.$$
Then, for $p>\frac{2(n+1)}{n}$, by the same argument as \eqref{r1.1}, we have 
$$||e^{it\Delta}f||_{L_x^pL_t^{\infty}(P_R)}\lesssim N^{\frac{1}{p}}M^{-\epsilon^2}R^{\epsilon}\|f\|_{L^2}.$$

When $M< R^{\frac{1}{2}-\delta}$. We decompose
$$||e^{it\Delta}f||_{L_x^pL_t^{\infty}(P_R)}^p=\sum_{D_K}\int_{D_K}\max_{I_{K/N}}\sup_{t\in I_{K/N}}|e^{it\Delta}f(x)|^p dx.$$
For each $D_K\times I_{K/N}$, there exist lines $V_1,...,V_A$ which achieve the minimum of $\mu_{e^{it\Delta}f}(D_K\times I_{K/N})$. Then, for $(x,t)\in D_K\times I_{K/N}$, we have
$$|e^{it\Delta}f(x)|^p\lesssim K^{O(1)}\max_{\tau\notin V_a\ \text{for all}\ a}|e^{it\Delta}f(x)|^p+\sum_{a=1}^A\Big|\sum_{\tau\in V_a}e^{it\Delta}f(x)\Big|^p$$
and
\begin{align*}
	||e^{it\Delta}f||_{L_x^pL_t^{\infty}(P_R)}^p
	&\lesssim K^{O(1)}\sum_{D_K}\int_{D_K}\max_{I_{K/N}}\sup_{t\in I_{K/N}}\max_{\tau\notin V_a\ \text{for all}\ a}|e^{it\Delta}f_{\tau}(x)|^p dx\\
	&\ \ \ \ \  +\sum_{a=1}^A \sum_{D_K}\int_{D_K}\sup_{t\in [0,R]}|\sum_{\tau\in V_a}e^{it\Delta}f_{\tau}(x)|^p dx\\
	& =:I+II.
\end{align*}

For $I$, from the local constant property and Theorem \ref{t2.3x}, it follows that
\begin{align*}
	I
	&\lesssim NK^{O(1)}\sum_{D_K} \max_{I_{K/N}}\max_{\tau\notin V_a\ \text{for all}\ a}\int_{D_K\times I_{K/N}}|e^{it\Delta}f_{\tau}(x)|^p dx\\
	&\lesssim NK^{O(1)}\|e^{it\Delta}f\|_{BL_A^{p,\infty}(P_R)}^p\\
	& \lesssim K^{O(1)} \Big(N^{\frac1p}M^{-\epsilon^2}R^{\epsilon}\|f\|_{L^2}\Big)^p.
\end{align*}

For $II$, notice that $V_a\subset \mathbb{R}^n$ is a line, we have
$$\sharp\{\tau: \tau\in V_a\}\lesssim 1.$$
By induction on the frequency radius, 
\begin{align*}
	II
	&\lesssim \sum_{a=1}^A\sum_{\tau} ||e^{it\Delta}f_{\tau}||_{L_x^pL_t^{\infty}(P_R)}^p\\
	&\lesssim \sum_{a=1}^A\sum_{\tau}[N^{\frac1p}(KM)^{-\epsilon^2}R^{\epsilon}\|f\|_{L^2}]^p\\
	& \lesssim AK^{-p\epsilon^2} \Big(N^{\frac1p}M^{-\epsilon^2}R^{\epsilon}\|f\|_{L^2}\Big)^p.
\end{align*} 
We can close the induction by choosing $K$ large enough and finish the proof.	
\end{proof}

\subsection{An maximal estimate}
To end this section, we set up a $BL_A^{2,q}$ estimate which will be used in the interpolation argument in subsection \ref{s4.21}.

Let $1\leq m\leq n+1$. Let $Z$ denote the $m$ dimension variety $Z(P_1,...,P_{n+1-m})$ which consists of the common zeros of polynomials $P_1,...,P_{n+1-m}$ on $\mathbb{R}^{n+1}$. The variety $Z$ is a
transverse complete intersection in the sense,
$$\nabla P_1(x)\wedge...\wedge\nabla P_{n+1-m}(x)\neq 0\ \ \text{for all}\ x\in Z.$$
Given a small constant  $\delta_m>0$ depending on the dimension $m$. For any $(\theta,\nu)\in \mathbb T[R]$, we say that a tube $T_{\theta,\nu}$ is $R^{-1/2+\delta_m}$-tangent to $Z$ on $P_R$ if
$$T_{\theta,\nu}\subset N_{R^{1/2+\delta_m}}Z\cap P_R$$
and
$$\angle(G(\theta),T_zZ)\lesssim R^{-1/2+\delta_m}$$
for any $z\in N_{2R^{1/2+\delta_m}}(T_{\theta,\nu})\cap 2P_R\cap Z$. Here $T_zZ$ denotes the $m$-dimension tangent space of the variety $Z$ at $z$. As before
$$\mathbb{T}_Z(R):=\{(\theta,\nu)\in\mathbb{T}[R]:T_{\theta,\nu}\ \text{is}\ R^{-\frac{1}{2}+\delta_m}\ \text{-tangent to}\ Z\}.$$

The main result in this part is the following theorem.
\begin{theorem}\label{c3.1}
	Let $n\geq3$ and $2\leq m\leq n+1$. Suppose that $f\in L^2(\mathbb{R}^{n})$ is Fourier supported in $\mathbb{B}^n(Ne_1,1)$ and concentrated in $\mathbb{T}_Z(R)$. For any $\epsilon>0$,
	\begin{equation}\label{3.11}
		\|e^{it\Delta}f\|_{BL_A^{2,q}(P_R)}\lesssim_{\epsilon} K^{O(1)}R^{\frac{m}{2(m+1)}+\epsilon}\|f\|_{L^2}		
	\end{equation}
	hold for any $q>1/\epsilon^4$.
\end{theorem}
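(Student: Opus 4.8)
The plan is to establish \eqref{3.11} by a double induction, on the dimension $m$ of the variety $Z$ and on the scale $R$, run through the nested polynomial partitioning algorithms of Hickman and Rogers \cite{hg19} in the spirit of the tangential estimates of Du and Zhang \cite{dz18}, with the mixed norm $L^2_xL^\infty_t$ in place of the usual $L^2_{x,t}$.

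First I would reduce the $BL_A^{2,q}$ bound to a $BL_A^{2,\infty}$ bound: since $p=2$ and the number of intervals $I_{K/N}$ inside $[0,R]$ is $\lesssim R/K$, for each $D_K$ one has $\big[\sum_{I_{K/N}}(\cdots)^{q/2}\big]^{2/q}\le (R/K)^{2/q}\max_{I_{K/N}}(\cdots)$, and $q>1/\epsilon^4$ forces $(R/K)^{2/q}\le R^{2\epsilon^4}$, which is absorbed into $R^{\epsilon}$. Hence it suffices to prove $\|e^{it\Delta}f\|_{BL_A^{2,\infty}(P_R)}\lesssim_\epsilon K^{O(1)}R^{\frac{m}{2(m+1)}+\epsilon/2}\|f\|_{L^2}$. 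Next I would apply the Galilean transform $\mathcal G\colon(x,t)\mapsto(x-4\pi tNe_1,t)$: it straightens $P_R$ to a box comparable to $Q_R\times[-R,R]$, moves $\supp\hat f$ into $\mathbb B^n(0,1)$, sends the slabs $D_K\times I_{K/N}$ to boxes $B_K\times I_{K/N}$ in the flattened frame, and carries $Z$ to another $m$-dimensional transverse complete intersection of comparable degree, preserving the $R^{-1/2+\delta_m}$-tangency of the wave packets. Because $e^{it\Delta}f$ and $e^{i\tilde t\Delta}\widetilde f$ differ only by a unimodular factor, and because the $N$ appearing in $|I_{K/N}|=K/N$ cancels against the same factor in $|D_K\times I_{K/N}|$, the transformed broad norm is the standard untilted one with \emph{no} power of $N$ — which is why none occurs on the right-hand side of \eqref{3.11}. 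We are thereby reduced to a mixed-norm refinement of the Du--Zhang tangential broad-norm estimate for $e^{i\tilde t\Delta}\widetilde f$ with $\widehat{\widetilde f}$ supported in the unit ball.

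For the induction itself, the base cases are $R\lesssim 1$ (trivial) and the auxiliary case $m=1$ (outside the stated range but serving as the foot of the induction): tangency to a curve of degree $R^{O(\delta_1)}$ confines the relevant wave packets, on each $R^{1/2+\delta_1}$-ball, to $R^{O(\delta_1)}$ directions, so the locally constant property together with $L^2$-orthogonality of the wave packets yields $\|e^{it\Delta}f\|_{BL_A^{2,\infty}(P_R)}\lesssim R^{\frac14+\epsilon}\|f\|_{L^2}$, which is $\frac{m}{2(m+1)}$ at $m=1$. For the inductive step, fix $2\le m\le n+1$ and $R$ large and apply polynomial partitioning on $Z$ at degree $D=R^{O(\delta_m)}$: this produces $\sim D^m$ cells, each of which meets $\lesssim D$ of the tangent tubes, together with the wall $Z\cap Z(P)$, an $(m-1)$-dimensional transverse complete intersection. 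Splitting $f$ accordingly and using $\sum_i\|f_i\|_{L^2}^2\lesssim D\|f\|_{L^2}^2$ one is left with a cellular term and a wall term. The cellular term is closed by the induction on $R$ applied to the dominant cell — the gain from optimizing over the $\sim D^m$ cells beats the $D^{O(1)}$ partitioning loss since $\delta_m\ll\epsilon$ — after checking that the $L^2_xL^\infty_t$ structure and the $K/N$-slabs behave correctly under the rescaling. In the wall term, a further dichotomy separating wave packets tangent to the $(m-1)$-dimensional wall variety from those transverse to it (the latter controlled by the polynomial-Wolff/transversality count bounding how many $R^{1/2+\delta_m}$-balls such a tube can occupy) reduces matters to wave packets tangent to the $(m-1)$-variety, to which the induction on $m$ with exponent $\frac{m-1}{2m}$ applies; combined with the transversality gain this upgrades $\frac{m-1}{2m}$ to the required $\frac{m}{2(m+1)}$.

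The main obstacle will be the wall term, specifically the descent from tangency to the $m$-dimensional $Z$ to tangency to the $(m-1)$-dimensional subvariety with \emph{exactly} the right loss: this hinges on the transversality input governing how tangent tubes concentrate near a subvariety, and on arranging that the $R^{O(\delta_m)}$ losses accumulated over the at most $n+1$ levels of the nested induction — which is precisely why $\delta_m$ is permitted to depend on $m$ — telescope into a single $R^{\epsilon}$. The remaining points, namely tracking the $K^{O(1)}$ factors through the broad--narrow reductions at each scale and verifying that the locally constant property survives the Galilean and parabolic rescalings, are routine but must be bookkept with care.
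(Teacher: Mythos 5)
Your plan diverges fundamentally from the paper's. The paper does \emph{not} run a fresh polynomial-partitioning induction on a broad norm to prove Theorem~\ref{c3.1}; it reduces the theorem to a single $L^2_xL^\infty_t$ maximal estimate, Proposition~\ref{p3.1},
\[
\|e^{it\Delta}f\|_{L^2_xL^\infty_t(P_R)}\lesssim_\epsilon N^{1/2}R^{\frac{m}{2(m+1)}+\epsilon}\|f\|_{L^2},
\]
via Lemma~\ref{l3.1} (a few lines: bound $\mu_{e^{it\Delta}f}(D_K\times I_{K/N})$ by $\sum_\tau\|e^{it\Delta}f_\tau\|_{L^p(D_K\times I_{K/N})}^p$, pass to $L^\infty_t$ at the cost of $|I_{K/N}|\sim K/N$, then absorb the $\ell^q$ sum over $I_{K/N}$ into $R^{O(\epsilon^4)}$). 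Proposition~\ref{p3.1} is then proved by pigeonhole and locally constant property alone: discretize the $\sup_t$ into $1\times\cdots\times1\times N^{-1}$ boxes, build $X$ with $|A_H|\sim N|X|$, pigeonhole onto $R^{1/2}$-parallelepipeds where $\|e^{it\Delta}f\|_{L^{p_m}}$ is essentially constant and arranged with $\sim\sigma$ per horizontal slab, undo the Galilean tilt, and then \emph{invoke as a black box the linear refined Strichartz estimate of Du--Guth--Li--Zhang \cite{dglz18}} for $m$-varieties in dimension $n+1$. All of the tangency-to-variety structure is consumed by that citation; no new partitioning argument is run anywhere in the proof.

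By contrast, your outline has two concrete problems. First, you propose to reduce to $BL_A^{2,\infty}$ and then run polynomial partitioning directly on that norm, but the paper explicitly notes that $BL_A^{p,\infty}$ is not continuous and therefore cannot support the partitioning argument; this is the very reason the auxiliary norms $BL_A^{p,q}$ with finite $q$ were introduced, and it is why the paper takes the detour through an honest $L^2_xL^\infty_t$ estimate instead of partitioning a maximal-type broad norm. Second, even granting the partitioning, your sketch would effectively be re-deriving the content of \cite{dglz18} (cellular/wall descent in $m$, transverse tube counting, the nested $\delta_m$ hierarchy), which is a far larger undertaking than the paragraph you allot and is precisely what the paper's reduction is designed to avoid; moreover your description of the wall case as a ``transversality gain upgrading $\frac{m-1}{2m}$ to $\frac{m}{2(m+1)}$'' misreads the monotonicity — $\frac{m}{2(m+1)}>\frac{m-1}{2m}$, so the inherited exponent for the $(m-1)$-tangent part is already stronger than needed, and the real work sits entirely in the unproven transverse count. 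You should look for the reduction to the DGLZ refined Strichartz via the locally constant property rather than attempting a from-scratch induction.
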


From Lemma \ref{l3.1}, it can be reduce to set up the following maximal estimate.

\begin{proposition}\label{p3.1}
	Let $2\leq m\leq n+1$. Suppose that $f\in L^2(\mathbb{R}^{n})$ is Fourier supported in $\mathbb{B}^n(Ne_1,1)$ and concentrated in $\mathbb{T}_Z(R)$. For any $\epsilon>0$ and $R\geq 1$, then
	\begin{equation}\label{e2.6}
		\|e^{it\Delta}f\|_{L^{2}_xL^{\infty}_t(P_R)}\lesssim_ \epsilon N^{\frac12} R^{\frac{m}{2(m+1)}+\epsilon} \|f\|_{L^2}.
	\end{equation}
\end{proposition}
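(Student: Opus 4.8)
\textbf{Proof proposal for Proposition \ref{p3.1}.}

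The plan is to reduce the $L^2_x L^\infty_t$ maximal estimate to a fixed-time estimate by discretizing in time and exploiting the tangency hypothesis, then to invoke the known $L^2$-based Fourier restriction estimate for functions concentrated near an $m$-dimensional variety. First I would partition $P_R$ into the slabs $D_K \times I_{K/N}$ as in the definition of the broad norm, and use the locally constant property: since $\hat f$ is supported in $\mathbb B^n(Ne_1,1)$, on each time interval of length $\sim 1/N$ the function $e^{it\Delta}f$ is essentially constant, so $\sup_{0<t\le R}|e^{it\Delta}f(x)|$ is comparable to the $\ell^\infty$ over the $\sim NR$ sampled times $t_k$ of $|e^{it_k\Delta}f(x)|$, and then to an $\ell^2$-averaged quantity at the cost of $(NR)^{1/2}$ — but this crude bound is wasteful. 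Instead, the key is that $f$ is concentrated in $\mathbb T_Z(R)$: the wave packets all lie in the $R^{1/2+\delta_m}$-neighborhood of an $m$-dimensional variety $Z$, so the $t$-fibers over a fixed $x$ that actually contribute have measure controlled by the geometry of $Z$, effectively replacing the ambient dimension $n+1$ by $m$.

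The main step I would carry out is to run the Córdoba–type $L^2$ argument (or equivalently cite the $k$-broad / polynomial-Wolff machinery for varieties as in Guth \cite{G18} and Du–Zhang \cite{dz18}) adapted to the parallelepiped geometry: after applying the Galilean transform $\mathcal G:(x,t)\mapsto(x-4\pi tNe_1,t)$ which sends $P_R$ to $Q_R\times[0,R]$ and $\mathbb B^n(Ne_1,1)$ to $\mathbb B^n(0,1)$, one is reduced to showing
$$\|\sup_{0<t\le R}|e^{it\Delta}g|\|_{L^2(Q_R)}\lesssim_\epsilon R^{\frac{m}{2(m+1)}+\epsilon}\|g\|_{L^2}$$
for $g$ concentrated on tubes $R^{-1/2+\delta_m}$-tangent to $\mathcal G(Z)$, and then track how the factor $N^{1/2}$ re-enters when undoing the transform (the time interval in the original coordinates has length $R$ but the relevant wave-packet length scale in $t$ picks up the factor $N$, which is exactly the $N^{1/2}$ loss). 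For the reduced estimate I would decompose into caps $\tau$ of radius $R^{-1/2}$, linearize the supremum by a measurable choice $t(x)$, and use the fact that the overlap function of the tangent tubes over a fixed time is governed by the $(m-1)$-dimensional transversality of $Z$, giving the $R^{m/(m+1)}$ gain in $L^2$ rather than $R^n$; the exponent $\frac{m}{2(m+1)}$ is the square root of the per-cap count balanced against Plancherel, exactly as in \eqref{e1.1} with $n$ replaced by $m$.

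The hard part will be making the tangency reduction rigorous at the level of the supremum in $t$: one must show that linearizing $\sup_t$ and then restricting to wave packets in $\mathbb T_Z(R)$ is compatible, i.e. that the contribution of packets passing through a point $(x,t(x))$ is controlled by the local geometry of $Z$ near that point uniformly in the choice of $t(x)$. This is where the Du–Guth–Li locally-constant discussion (\cite[Pages 635--636]{dgl17}) and the polynomial partitioning structure of $Z$ (it is a transverse complete intersection, so $N_{R^{1/2+\delta_m}}Z$ has the expected $R^{(n+1-m)/2}$ codimension behavior) enter. I would also need to handle the bush/tube-overlap counting for tubes tangent to $Z$, for which I would cite the standard consequence that a family of $R^{1/2+\delta_m}$-tubes $R^{-1/2+\delta_m}$-tangent to an $m$-variety inside a ball of radius $R$ has $L^2$ overlap $\lesssim R^\epsilon R^{m/2} \cdot(\text{something})$ — precisely the input that produces $R^{m/(2(m+1))}$ after optimizing. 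Everything else (dyadic pigeonholing in the level sets of the maximal function, summing the slab contributions, the $R^{O(\delta_m)}$ bookkeeping) is routine.
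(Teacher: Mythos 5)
Your outer structure matches the paper's proof: dyadic pigeonholing on the level set of the maximal function, the locally-constant-property construction that trades the $L^\infty_t$ supremum for a thin sampling set of $1\times\cdots\times1\times\frac1N$ boxes (this is exactly where the $N^{1/2}$ arises, via $|A_H|\sim N|X|$ in the paper's notation, not so much from ``undoing the Galilean transform''), and the Galilean shear to move $\mathbb B^n(Ne_1,1)$ back to $\mathbb B^n(0,1)$ while preserving the tangency-to-variety condition. You also correctly recognize that after the shear one should reach for a black-box input governed by the dimension $m$ of the variety.

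However, the middle step you propose is not the one the paper uses, and I don't think it closes as written. You suggest linearizing $\sup_t$ by a measurable $t(x)$, decomposing into $R^{-1/2}$-caps, and invoking a Córdoba-type $L^2$ tube-overlap count for tubes tangent to $Z$. A per-point, per-time-slice tube-overlap bound for an $(m-1)$-parameter family of tangent directions produces an exponent of the form $R^{(m-1)/4}$, not $R^{m/(2(m+1))}$; for $m\geq 3$ this is strictly worse, and in general the two exponents do not match. The sharp exponent requires exploiting the \emph{spatial distribution} of the mass, which a pointwise linearization throws away. What the paper actually does is a second pigeonhole: it selects a union $Y=\bigcup_j P_j$ of $R^{1/2}$-parallelepipeds on which $\|e^{it\Delta}f\|_{L^{p_m}(P_j)}$ with $p_m=\tfrac{2(m+1)}{m-1}$ is essentially constant and such that each horizontal $R^{1/2}$-slab contains $\sim\sigma$ of them, converts $H|X|^{1/2}$ to $(\text{geometric factor})\cdot\|e^{it\Delta}f\|_{L^{p_m}(Y)}$, applies the Galilean transform, and then cites the linear refined Strichartz estimate for $m$-varieties from Du--Guth--Li--Zhang \cite{dglz18}, which gives the gain $R^{-(n+1-m)/2(m+1)}\sigma^{-1/(m+1)}$. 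It is the $\sigma$-dependence in that theorem, traded against the counting bound $J\lesssim\sigma R^{1/2}$, that produces $R^{m/(2(m+1))}$ after optimization. So the black box you need to name is the DGLZ refined Strichartz for $m$-varieties, and the intermediate pigeonholing over $R^{1/2}$-parallelepipeds (in $L^{p_m}$, not $L^2$, with the slab multiplicity $\sigma$ tracked) is not ``routine bookkeeping'' but the crux of the argument; without it your tube-overlap estimate does not deliver the claimed exponent.
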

The proof is  similar to  Proposition \ref{p2.3}.
\begin{proof}
	
	No loss of generality,  we assume $\|f\|_{L^2}=1$. In the same way, we introduce in the notation $$ A_H=\big\{x:(x,t)\in P_R,\sup_{0<t\leq R}|e^{it\Delta}f(x)|\sim H\big\}$$ and assume $R^{-100}\leq H\leq 1$. By the dyadic pigeonhole principle, we have
	\begin{equation}
		\|e^{it\Delta}f\|_{L_x^{2}L_t^{\infty}(P_R)}\lesssim (\log R)H|A_H|^{\frac12}.
	\end{equation}
	The Fourier transform of $e^{it\Delta}f$ is supported on a rectangular box of dimensions $$1\times\cdots\times 1\times N.$$
	Then it is essentially constant on a rectangular box of dimensions $1\times\cdots\times 1\times 1/N$. One can choose a collection $X\subset P_R$ is the union of the rectangular box of dimensions $1\times ... \times 1\times 1/N$ satisfying:
	\begin{itemize}
		\item Each vertical thin tube of dimensions $1\times ...\times 1\times R$ contains at most one rectangular box $1\times ...\times 1\times 1/N$ in $X$;
		\item The projection of set $X$ on $\mathbb{R}^n$ covers $A_H$.
	\end{itemize}
	By the definition of $X$, we have
	\begin{equation}\label{e2.9}
		|A_H|\sim N |X|.
	\end{equation}
	The parallelepiped $P_R$ is decomposed into $R^{\frac{1}{2}}$-parallelepipeds $P_j$ and  intersect $X$. By pigeonhole principle, we choose $P_j$s satisfying:
	\begin{itemize}
		\item $\|e^{it\Delta}f\|_{L^{\frac{2(m+1)}{(m-1)}}(P_j)}\sim \gamma\quad \text{for some dyadic number}\, R^{-C}\leq \gamma\leq R^C$;
		\item Each horizontal $R^{\frac12}$-slab  contains $\sim \sigma$ many $P_j$s.
	\end{itemize}
	Let $Y=\bigcup_{j=1}^{J}P_j$, we obtain
	\begin{equation}\label{e2.10}
		|X|\lesssim (\log R)^2|X\cap Y|\lesssim (\log R)^2 JR^{\frac n2}
	\end{equation}
	and $$J\lesssim \sigma R^{\frac12}.$$
	Using the locally constant property and the Galilean transform, we can obtain
	\begin{align}\label{eq:3.7}
		\|e^{it\Delta}f\|_{L^{2}_xL^{\infty}_t(P_R)}
		&\lesssim_\epsilon R^{\epsilon}N^{\frac12}|X|^{\frac12-\frac{m-1}{2(m+1)}}H|X\cap Y|^{\frac{m-1}{2(m+1)}}\notag\\
		& \lesssim_\epsilon R^{\epsilon}N^{\frac12}(\sigma R^{\frac{n+1}{2}})^{\frac{1}{m+1}}\|e^{it\Delta}f(x)\|_{L^{\frac{2(m+1)}{m-1}}(Y)}\notag\\
		& \lesssim_\epsilon R^{\epsilon}N^{\frac12}(\sigma R^{\frac{n+1}{2}})^{\frac{1}{m+1}}\|e^{i\tilde{t}\Delta}g(\tilde{x})\|_{L^{\frac{2(m+1)}{m-1}}(\tilde{Y})}.
	\end{align}
	Here $g$ is the image of $f$ under the Galilean transform satisfying $\supp \hat{g}\subset \mathbb{B}^n(0,1)$ and $\|g\|_{L^2}=1$. And  $\tilde{Y}=\bigcup_{j=1}^{J}Q_j\subset \mathbb{B}^{n+1}(0,R)$ is the image of $Y$. Since the tangent-to-variety condition is preserved under the Galilean transform, we finish the proof of Proposition \ref{p3.1} by recalling the follow linear refined Strichartz estimate.
\end{proof}

\begin{theorem}\cite{dglz18} (Linear refined Strichartz for $m$-variety in dimension $n+1$). 
	Let $2\leq m\leq n+1$ and $p_m=2(m+1)/(m-1)$.
	Suppose that $Z=Z(P_1,...,P_{n+1-m})$ is a transverse complete intersection where
	$Deg P_i\leq R^{\delta_{i}}$. Here $\delta_{i}\ll\delta$ is a small parameter. Suppose that $g\in L^2(\mathbb{R}^n)$ is Fourier supported in $\mathbb{B}^n(0,1)$ and concentrated in $\mathbb{T}_Z(R)$ on $\mathbb{B}^{n+1}(0,R)$. Suppose that $Q_1,\ Q_2,...$ are lattice $R^{1/2}$-cubes in $\mathbb{B}^{n+1}(0,R)$, so that
	$$\|e^{it\Delta}g\|_{L^{p_m}(Q_j)}\ \ \text{is essentially constant in}\ j.$$
	Suppose that these cubes are arranged in horizontal slabs of the form $\mathbb{R}\times...\times\mathbb{R}\times\{t_0,t_0\}$, and that each such slab contains $\sim\sigma$ cubes $Q_j$. Let $Y$ denote $\bigcup_j Q_j$. Then
	$$\|e^{it\Delta}g\|_{L^{p_m}(Y)}\leq C_{\epsilon}R^{\epsilon}R^{-(n+1-m)/2(m+1)}\sigma^{-\frac{1}{m+1}}\|g\|_{L^2}.$$
\end{theorem}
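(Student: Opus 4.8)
The plan is to prove this refined Strichartz inequality by a double induction: an outer induction on the dimension $m$ of $Z$ and, for each fixed $m$, an inner induction on the scale $R$. Normalise $\|g\|_{L^2}=1$; the regime $R\lesssim1$ is immediate, so assume $R\gg1$. Decompose $e^{it\Delta}g=\sum_{T\in\mathbb T_Z(R)}e^{it\Delta}g_T$ into scale-$R$ wave packets and record the local estimate: on each lattice $R^{1/2}$-cube $Q_j$ a parabolic rescaling turns $e^{it\Delta}g|_{Q_j}$ into a unit-scale Schr\"odinger solution with frequency in $R^{-1/2}$-caps, and combining $\ell^2$-decoupling into those caps with the essentially-constant behaviour of each $e^{it\Delta}g_T$ on $Q_j$ gives
$$\|e^{it\Delta}g\|_{L^{p_m}(Q_j)}^{p_m}\ \lesssim_\epsilon\ R^\epsilon\,\mu(Q_j),$$
where $\mu(Q_j)$ depends only on the number of packets of $\mathbb T_Z(R)$ meeting $Q_j$ and on their $L^2$-masses. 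Since by hypothesis the $\|e^{it\Delta}g\|_{L^{p_m}(Q_j)}$ are essentially constant in $j$ and each horizontal slab holds $\sim\sigma$ of the $J$ cubes of $Y$, we obtain $\|e^{it\Delta}g\|_{L^{p_m}(Y)}^{p_m}\sim J\cdot(\text{common value})^{p_m}$; thus everything reduces to bounding $J$ and a typical per-cube packet count in terms of $\sigma$, uniformly in the geometry of $Z$.

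\textbf{Base case.} For the smallest admissible dimension $m=2$, tangency to the two-dimensional $Z$ confines the packets — after rescaling onto $N_{R^{1/2}}(Z)$ — to behave like those of an essentially one-spatial-dimensional extension problem, and $p_2=6$ is exactly the Stein--Tomas exponent for the parabola; the refined $\sigma$-gain then follows from the bilinear restriction estimate for the parabola together with the bilinear-to-linear argument, the $\sigma$-factor coming from planar incidence geometry. This (and the trivial small-$R$ estimate) anchors the induction.

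\textbf{Inductive step.} Fix $2<m\le n+1$ and $R\gg1$, and assume the estimate for all strictly smaller pairs (lexicographically, with $m$ outer). Apply Guth's polynomial partitioning adapted to $Z$: choose $P$ with $\deg P=D\approx R^{\delta_m}$ (with $\delta_{n+1}\ll\cdots\ll\delta_2\ll\delta$) so that $Z\setminus Z(P)$ splits into $\sim D^m$ cells, each carrying a comparable fraction of the $L^{p_m}$-norm of $e^{it\Delta}g$, together with the wall $W=N_{R^{1/2+\delta}}(Z\cap Z(P))$. In the \emph{cellular} alternative, each packet of $\mathbb T_Z(R)$ meets $\lesssim D$ cells, so the packets — and, crucially, the slab count $\sigma$ — spread out among the $\sim D^m$ cells; choosing a cell with large norm and only a $\lesssim D^{-(m-1)}$ share of the $L^2$-mass, and noting it lies in a ball of radius $\lesssim R/D$ with its packets still $R^{-1/2+\delta}$-tangent to $Z$, the inner induction on $R$ applies, and the numerology closes because $p_m$ comfortably exceeds $2m/(m-1)$. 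In the \emph{wall} alternative, the surviving packets are $R^{-1/2+\delta}$-tangent to $Z\cap Z(P)$, a transverse complete intersection of dimension $m-1$ and admissible degree, so one invokes the outer induction hypothesis at dimension $m-1$ (exponent $p_{m-1}>p_m$) and passes back to $L^{p_m}$ via H\"older against the $R^{-1/2+\delta}$-thin wall; the stronger dimensional gain $R^{-(n+2-m)/2m}$ of level $m-1$, combined with that thinness, beats the level-$m$ target $R^{-(n+1-m)/2(m+1)}$. Summing the two alternatives and choosing $D$ suitably closes both inductions with only the stated $R^\epsilon$ loss.

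\textbf{Main difficulty.} The subtle point is tracking the slab count $\sigma$ (and the per-cube packet counts) through the polynomial partitioning: one must show that in the cellular alternative the effective $\sigma$ drops by the right power of $D$ — this is where the horizontal-slab arrangement of the cubes and a Wongkew-type volume bound for the part of a tangent tube lying in one cell come in — and that the wall term is genuinely lower order despite the exponent worsening from $p_m$ to $p_{m-1}$. Once these two accounting statements are in hand, the remainder is a careful but routine iteration of the polynomial method.
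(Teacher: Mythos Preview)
The paper does not supply its own proof of this theorem: it is quoted verbatim from Du--Guth--Li--Zhang \cite{dglz18} and used as a black box to finish the proof of Proposition~\ref{p3.1}. So there is no ``paper's own proof'' to compare against; the relevant benchmark is the argument in \cite{dglz18}.

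Your outline differs from the proof in \cite{dglz18}. There the linear refined Strichartz on an $m$-variety is obtained from a \emph{multilinear} refined Strichartz estimate (proved by induction on scales via parabolic rescaling and $\ell^2$-decoupling) together with a broad/narrow decomposition; the $\sigma^{-1/(m+1)}$ gain emerges naturally from the multilinear estimate applied to the broad part, while the narrow part is handled by induction on $m$ through a lower-dimensional rescaling. Polynomial partitioning is not run directly on the refined Strichartz quantity with the slab parameter $\sigma$ attached.

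Your direct polynomial-partitioning approach has a genuine gap precisely where you flag the ``main difficulty''. In the cellular alternative you assert that the effective $\sigma$ drops by the right power of $D$, but the cells produced by partitioning have no reason to respect the horizontal slab structure: a cell can be tall and thin, intersecting many slabs while containing essentially all the $Q_j$ from each of those slabs, so the per-slab count $\sigma$ need not decrease at all inside a cell. A Wongkew-type volume bound controls how much of a \emph{tube} lies in a cell, not how the \emph{cubes} $Q_j$ distribute across slabs within a cell; those are different counts. Without an honest mechanism relating the cell decomposition to the slab parameter, the induction on $R$ does not close with the required $\sigma^{-1/(m+1)}$ factor. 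The wall case has an analogous issue: passing to $Z\cap Z(P)$ changes the ambient geometry but you still owe an explanation of why the surviving cubes inherit a slab arrangement with a compatible $\sigma$. This is why \cite{dglz18} routes through the multilinear estimate rather than attempting to propagate $\sigma$ directly through a partitioning scheme.
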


\section{Algorithm}\label{Alg}

We now give the proof of Theorem \ref{t2.3x}. If $M\geq R^{\frac12-\delta}$, by Lemma \ref{l3.1}, we have $$\|e^{it\Delta}f\|_{BL_A^{p,\infty}(P_R)}\lesssim_{p, \epsilon} M^{-\epsilon^2}R^{\epsilon}\|f\|_{L^2}$$
holds for $p>\frac{2(n+1)}{n}$. So we can assume $M< R^{\frac12-\delta}$. To show that, for supp$\hat{f}\subset \mathbb{B}^n(\xi_0, \frac{1}{M})$,
\begin{equation*}
		\|e^{it\Delta}f\|_{BL_A^{p,\infty}(P_R)}\lesssim_{p,\epsilon} M^{-\epsilon^2}R^{\epsilon}\|f\|_{L^2}
\end{equation*}
holds for $p\geq\tilde{p}_n$. We only need to prove that, for supp$\hat{f}\subset \mathbb{B}^n(Ne_1, 1)$, 
\begin{equation}\label{e3.14}
	\|e^{it\Delta}f\|_{BL_A^{p,\infty}(P_R)}\lesssim_{p,\epsilon} R^{\epsilon}\|f\|_{L^2}
\end{equation}
holds for $p\geq \tilde{p}_n$ since 
$$R^{\frac{\epsilon}{2}}\leq R^{\epsilon}R^{-\frac{\epsilon^2}{2}}\leq R^{\epsilon}M^{-\epsilon^2}.$$ For \eqref{e3.14}, it is suffice to prove that
\begin{equation}\label{e3.13}
	\|e^{it\Delta}f\|_{BL_A^{p,q}(P_R)}\lesssim_{p,\epsilon} R^{\epsilon}\|f\|_{L^2}
\end{equation}
holds for $p\geq\tilde{p}_n$ and any $q>1/\epsilon^4$.

We will adopt two algorithms which were introduced by Hickman and Rogers \cite{hg19} and developed by Guth \cite{G18} in the study of Fourier restriction conjecture. The algorithms conclude the scale induction and the dimensional induction. We refer the readers \cite{G18,hg19,cmw21} for more details.

\begin{proposition}\cite{G18}\label{p3.4}
	Let $1\leq r^{\frac12}\leq \rho\leq r$ and $P_{r}\cap N_{{r}^{1/2+\delta_m}}Z\neq\emptyset$. Here $m$ denotes the dimension of the variety $Z$ and $\delta_m$ is a small number depends $m$. $f$ is concentrated in $\mathbb{T}_Z(r)$. Then there is a set of translates $\mathcal{B}\subset \mathbb{B}^{n+1}(0,{r}^{1/2+\delta_m})$ such that
	$$\|e^{it\Delta}f\|_{BL_A^{p,q}(P_r\cap N_{r^{1/2+\delta_m}}Z)}^p\lesssim \log^2r\sum_{b\in\mathcal{B}}\|e^{it\Delta}\tilde{f}_b\|_{BL_A^{p,q}(P_{\rho}\cap N_{{\rho}^{1/2+\delta_m}}(Z+b))}^p+ r^{-M}\|f\|_{L^2}^p$$
	where $\tilde{f}_b$ is concentrated in $\mathbb{T}_{Z+b}(\rho)$ with $Z+b=\{z+b:z\in Z\}$.
	Moreover, 
	$$\sum_{b\in\mathcal{B}}\|\tilde{f}_b\|_{L^2}^2\lesssim\|f\|_{L^2}^2$$
	and for any $b\in\mathcal{B}$,
	$$\|\tilde{f}_b\|_{L^2}^2\lesssim r^{O(\delta_m)}\big(\frac {r}{\rho}\big)^{-\frac{n+1-m}{2}}\|f\|_{L^2}^2.$$
\end{proposition}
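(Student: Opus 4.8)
The plan is to obtain Proposition~\ref{p3.4}, following Guth~\cite{G18}, by iterating $\log_2(r/\rho)$ times a single elementary \emph{tangential covering} step; no polynomial partitioning is carried out inside the wall, only a relocalisation of the wave packets to smaller parallelepipeds together with an $L^2$ bookkeeping that records the dimensional gain coming from the $m$-dimensionality of $Z$. Concretely, first cover $P_r\cap N_{r^{1/2+\delta_m}}Z$ by a boundedly overlapping family of parallelepipeds $P_\rho(z_b)$ of size $\rho$ with centres $z_b$ within $O(r^{1/2+\delta_m})$ of $Z$, discarding the ones disjoint from the wall. By Proposition~\ref{p3.2}(i),
\[
\|e^{it\Delta}f\|_{BL_A^{p,q}(P_r\cap N_{r^{1/2+\delta_m}}Z)}^p\lesssim \sum_{b}\|e^{it\Delta}f\|_{BL_A^{p,q}\left(P_\rho(z_b)\cap N_{r^{1/2+\delta_m}}Z\right)}^p .
\]

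Next, for each fixed $b$ I would translate the spacetime box $P_\rho(z_b)$ to the origin exactly as in Lemma~\ref{l2.5}: writing $(x,t)=(\tilde x,\tilde t)+z_b$ and $\hat{\tilde f_b}(\xi)=e^{2\pi i(x_b\cdot\xi-2\pi t_b|\xi|^2)}\hat f(\xi)$ one has $e^{it\Delta}f(x)=e^{i\tilde t\Delta}\tilde f_b(\tilde x)$ on $P_\rho(z_b)$, and after coarsening the scale-$r$ wave packets into scale-$\rho$ ones (Lemma~\ref{l2.5}) the portion of $\tilde f_b$ supported on packets meeting $P_\rho(z_b)\cap N_{r^{1/2+\delta_m}}Z$ is, up to a $\text{RapDec}(r)\|f\|_{L^2}$ error, concentrated in $\mathbb{T}_{Z+b}(\rho)$ for the translate $Z+b$ that passes through $z_b$; here the translation by $b$ (of length $\lesssim r^{1/2+\delta_m}$) is precisely what is needed to absorb the positional uncertainty of the fat scale-$r$ tubes and to turn $r^{-1/2+\delta_m}$-tangency to $Z$ on $P_r$ into $\rho^{-1/2+\delta_m}$-tangency to $Z+b$ on $P_\rho(z_b)$, using that $Z$ is a transverse complete intersection of degree $\le r^{\delta_m}$ so that it is essentially flat at scale $\rho$. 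Inserting this into the displayed inequality, after a dyadic pigeonholing over the size of $e^{it\Delta}f$ and over the number of balls lying in a common horizontal slab (needed to make the $BL^{p,q}$ quasi-norms on the left and right genuinely comparable), produces the first asserted bound with the stated $\log^2 r$ loss and $\text{RapDec}$ tail.

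It remains to prove the two $L^2$ facts about $\{\tilde f_b\}$. The global one, $\sum_b\|\tilde f_b\|_{L^2}^2\lesssim\|f\|_{L^2}^2$, follows from Plancherel and the almost orthogonality~\eqref{e3.1}: each scale-$\rho$ descendant of a fixed scale-$r$ wave packet $f_{\theta,\nu}$ is assigned to $O(1)$ balls $b$, because the relevant scale-$\rho$ tubes tile $P_r$ with bounded overlap, so $\sum_b\|\tilde f_b\|_{L^2}^2\lesssim\sum_{\theta,\nu}\|f_{\theta,\nu}\|_{L^2}^2\sim\|f\|_{L^2}^2$; in particular $\|\tilde f_b\|_{L^2}^2\lesssim\|f\|_{L^2}^2$. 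The pointwise one, $\|\tilde f_b\|_{L^2}^2\lesssim r^{O(\delta_m)}(r/\rho)^{-(n+1-m)/2}\|f\|_{L^2}^2$, is the genuinely geometric step: a scale-$r$ wave packet that is $r^{-1/2+\delta_m}$-tangent to $Z$ feeds the fixed ball $P_\rho(z_b)$ only through those of its scale-$\rho$ descendants whose direction lies within $\rho^{-1/2+\delta_m}$ of $T_z(Z+b)$ for $z\in Z+b$ near $z_b$; since $T_z(Z+b)$ is an $m$-plane and, by the transverse-complete-intersection hypothesis together with the degree bound, the Gauss map $z\mapsto T_zZ$ varies by at most $r^{O(\delta_m)}\rho^{-1/2+\delta_m}$ over a $\rho$-ball meeting the wall, only a fraction $r^{O(\delta_m)}(\rho/r)^{(n+1-m)/2}$ of the descendants survive this angular constraint, while the surviving ones are essentially disjoint inside the ball and share the $L^2$ mass of the parent equally; summing over the orthogonal parents meeting the ball gives the claimed bound.

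The main obstacle is this last geometric estimate, i.e.\ quantifying the oscillation of the Gauss map of the transverse complete intersection $Z$ at scale $\rho$ so that the ``$m$ tangential directions free, $n+1-m$ normal directions frozen'' picture is rigorous and produces exactly the exponent $(n+1-m)/2$; this is where the smallness $\delta_m\ll\delta$ and the controlled degree of $Z$ enter, and everything else (the covering, the Lemma~\ref{l2.5}-type coarsening, the Galilean normalisation, the dyadic pigeonholing, and the Schwartz-tail errors) is routine bookkeeping.
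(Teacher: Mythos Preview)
The paper does not actually prove Proposition~\ref{p3.4}; it is quoted from Guth~\cite{G18} and used as a black box, so your sketch is being compared with Guth's argument rather than with anything in this paper.

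Your overall architecture is the right one: cover $P_r\cap N_{r^{1/2+\delta_m}}Z$ by $\rho$-parallelepipeds, recentre and pass from scale-$r$ to scale-$\rho$ wave packets via Lemma~\ref{l2.5}, and observe that tangency at scale $r$ to $Z$ becomes tangency at scale $\rho$ to a translate $Z+b$; the global $L^2$ bound by almost orthogonality is also fine. The mechanism you give for the pointwise bound
\[
\|\tilde f_b\|_{L^2}^2\lesssim r^{O(\delta_m)}(r/\rho)^{-\frac{n+1-m}{2}}\|f\|_{L^2}^2
\]
is, however, not correct. You attribute the gain to an \emph{angular} selection of the scale-$\rho$ descendants, claiming that only a fraction $(\rho/r)^{(n+1-m)/2}$ of them have direction within $\rho^{-1/2+\delta_m}$ of the tangent $m$-plane. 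But every scale-$\rho$ descendant of a scale-$r$ tube $T_{\theta,\nu}$ has direction within $\rho^{-1/2}$ of $G_0(\theta)$, and $G_0(\theta)$ is already within $r^{-1/2+\delta_m}\le \rho^{-1/2+\delta_m}$ of $T_zZ$ by the tangency hypothesis; so \emph{all} descendants satisfy the angular condition automatically and nothing is filtered on that basis.

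The genuine source of the factor $(\rho/r)^{(n+1-m)/2}$ is Guth's \emph{transverse equidistribution} estimate: inside a fixed $\rho$-ball the directions of all contributing scale-$r$ tubes lie in a common $m$-plane $V\approx T_zZ$, and one shows (via Plancherel and the curvature of the paraboloid restricted to $V^{\perp}$) that the $L^2$ mass of $e^{it\Delta}g$ on the ball is essentially equidistributed in the $n+1-m$ directions orthogonal to $V$ at scale $\rho^{1/2}$. Since $N_{\rho^{1/2+\delta_m}}(Z+b)$ meets the $\rho$-ball in a slab of thickness $\rho^{1/2+\delta_m}$ in those $n+1-m$ normal directions, while the tubes themselves occupy thickness $r^{1/2+\delta_m}$ there, the $L^2$ mass captured by $\tilde f_b$ is at most a $(\rho^{1/2}/r^{1/2})^{n+1-m}$ fraction of the mass of $f$ passing through that ball. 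The control of the Gauss map you mention is used here only to ensure that $V$ is well defined (i.e.\ that $T_zZ$ is essentially constant over the $\rho$-ball), not to perform an angular selection; you should replace the ``angular constraint'' paragraph by this positional equidistribution argument.
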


\begin{theorem}\cite{G18,cmw21}\label{t3.2}
	Fix $r\gg 1$ and $1\leq p,q<\infty$. Suppose that $Z$ is an $m$-dimensional transverse complete intersection of degree at most $D_Z$. Suppose that $f$ is concentrated in $\mathbb{T}_Z(r)$. Then there exists a
	constant $D=D(\epsilon,D_Z)$ such that at least one of the following cases holds.\\
\textbf{Cellular case}: there exist $O(D^m)$ shrunken cells $O_i$  with size at most $r/2$ such that for each $i$
		$$\|e^{it\Delta}f\|_{BL_A^{p,q}(P_r\cap N_{r^{1/2+\delta_m}}Z)}^p\lesssim D^m\|e^{it\Delta}f\|_{BL_A^{p,q}(O_i)}^p.$$
		\textbf{Algebraic case}: there exists an $m-1$-dimensional transverse complete intersection $Y$ of degree at most $O(D)$ such that
		$$\|e^{it\Delta}f\|_{BL_A^{p,q}(P_r\cap N_{r^{1/2+\delta_m}}Z)}^p \lesssim \|e^{it\Delta}f\|_{BL_A^{p,q}(P_r\cap N_{r^{1/2+\delta_m}}Y)}^p.$$
\end{theorem}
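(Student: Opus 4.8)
The plan is to run Guth's polynomial partitioning adapted to the variety $Z$, feeding it the density that underlies the broad norm, and then to combine the resulting algebraic partition with a trivial grid partition so that the cells come out of size $\le r/2$. Two features of $BL_A^{p,q}$ make the bookkeeping painless: the functional $\|e^{it\Delta}f\|_{BL_A^{p,q}(U)}^p$ is monotone in the domain $U$ (the local quantities $\mu_{e^{it\Delta}f}(D_K\times I_{K/N})$ do not see $U$), and it is subadditive over coverings of $U$ by Proposition \ref{p3.2}(i). So it suffices to decompose $P_r\cap N_{r^{1/2+\delta_m}}Z$ geometrically into cells and a wall and then pigeonhole; the broadness parameter $A$ is never touched.

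First I would attach to the broad norm a nonnegative density on $Z$: to each cube $D_K\times I_{K/N}$ meeting $N_{r^{1/2+\delta_m}}Z$ I assign its contribution $\big[\sum_{I_{K/N}}\big(\frac{|U\cap(D_K\times I_{K/N})|}{|D_K\times I_{K/N}|}\mu_{e^{it\Delta}f}(D_K\times I_{K/N})\big)^{q/p}\big]^{p/q}$ with $U=P_r\cap N_{r^{1/2+\delta_m}}Z$, and spread it along $Z\cap(D_K\times\rr)$, obtaining a measure of total mass $\sim\|e^{it\Delta}f\|_{BL_A^{p,q}(U)}^p$. Guth's polynomial ham-sandwich theorem on $Z$ (\cite{G18}) then produces, for a constant $D=D(\epsilon,D_Z)$, a polynomial $Q$ of degree $\le D$ not identically zero on $Z$ such that $Z\setminus Z(Q)$ splits into $\sim D^m$ open pieces, each carrying a $\sim D^{-m}$ share of the mass. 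Put $W:=N_{r^{1/2+\delta_m}}Z(Q)\cap U$ and let $O_i$ be the $r^{1/2+\delta_m}$-neighbourhood of the $i$-th piece, intersected with $P_r$, with $W$ removed; then $U=W\cup\bigcup_i O_i$, and a tube of $\mathbb T_Z(r)$ either lies in $W$ or, being $r^{-1/2+\delta_m}$-tangent to the $m$-dimensional $Z$ while $Z(Q)$ cuts $Z$ into cells, meets only $O(D)$ of the $O_i$. By Proposition \ref{p3.2}(i),
\[
\|e^{it\Delta}f\|_{BL_A^{p,q}(U)}^p\lesssim\sum_i\|e^{it\Delta}f\|_{BL_A^{p,q}(O_i)}^p+\|e^{it\Delta}f\|_{BL_A^{p,q}(W)}^p .
\]
If the sum dominates we are in the cellular case: the pieces of $Z$ carry comparable mass, so $\gtrsim D^m$ of the $O_i$ obey the asserted inequality; since one such piece can have diameter $\sim r$, I would further refine $Q$ to $Q\cdot L$ with $L$ a product of $O(1)$ axis-parallel linear forms, which multiplies both the cell count and the degree by only $O(1)$, and another pigeonhole keeps $\gtrsim D^m$ good cells, now of size $\le r/2$. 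If the wall dominates, monotonicity gives $\|e^{it\Delta}f\|_{BL_A^{p,q}(U)}^p\lesssim\|e^{it\Delta}f\|_{BL_A^{p,q}(P_r\cap N_{r^{1/2+\delta_m}}(Z\cap Z(Q)))}^p$.

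The hard part will be the algebraic case, that is, replacing $Z\cap Z(Q)$ by a genuine $(m-1)$-dimensional transverse complete intersection $Y$ of degree $O(D)$. Since $Q$ does not vanish on $Z$ one has $\dim(Z\cap Z(Q))\le m-1$, but transversality can break down where $Q$ is tangent to $Z$, and the intersection may carry components of the wrong dimension. The remedy is the structural lemma of \cite{G18} in the form used by Cao, Miao and Wang \cite{cmw21}: $Z\cap Z(Q)$ is contained in a union of $O_D(1)$ transverse complete intersections $Y$ of dimension $\le m-1$ and degree $O(D)$, together with a remainder removed by one further bounded-degree partition; I would then select the $Y$ carrying the largest share of the wall norm and invoke monotonicity once more. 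This guaranteed dimension drop is exactly what makes the outer induction on $m$ in the algorithm terminate, and the transversality and degree accounting in this step is the only genuinely delicate point — everything else is the standard Guth--Hickman--Rogers partitioning machinery, with $\delta_m$ taken small enough that the $r^{1/2+\delta_m}$-neighbourhoods behave as expected.
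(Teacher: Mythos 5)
The paper does not actually prove Theorem~\ref{t3.2}; it is stated with a citation to \cite{G18,cmw21} and then used as a black box inside the first algorithm, so there is no in-paper argument to compare against. Your proposal reconstructs the standard polynomial-partitioning-on-a-variety argument of Guth \cite{G18} and Hickman--Rogers \cite{hg19} as adapted in \cite{cmw21}, and the skeleton is right: ham-sandwich on $Z$ for the broad-norm density, the cell/wall dichotomy from Proposition~\ref{p3.2}(i), tubes of $\mathbb{T}_Z(r)$ crossing $O(D)$ cells, a grid refinement to force cells of size at most $r/2$, and the perturbation/structural lemma replacing $Z\cap Z(Q)$ by a genuine $(m-1)$-dimensional transverse complete intersection of degree $O(D)$ — the last correctly flagged as the delicate step, and indeed it is the content of Guth's Lemma 5.7 and its refinements. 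One imprecision worth fixing: to make the cellwise pigeonhole legitimate you should partition the locally constant density $H$ on $\mathbb{R}^{n+1}$ that equals $\mu_{e^{it\Delta}f}(D_K\times I_{K/N})/|D_K\times I_{K/N}|$ on each box $D_K\times I_{K/N}$, and apply the mixed-norm partitioning (Theorem~\ref{t2.3}, adapted to the variety as in \cite{G18}) restricted to $N_{r^{1/2+\delta_m}}Z$; with that choice the mass of $O_i$ controls $\|e^{it\Delta}f\|_{BL_A^{p,q}(O_i)}^p$, which is what you need to conclude that $\gtrsim D^m$ cells each satisfy the cellular inequality. Spreading a measure literally along $Z\cap(D_K\times\mathbb{R})$ with the ratio frozen at $U=P_r\cap N_{r^{1/2+\delta_m}}Z$ gives the right total mass but does not directly localise to the cellwise broad norm. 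With that adjustment your argument is the one in the cited sources.
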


\subsection{Sketch of the first algorithm} Suppose function $f$ satisfing the conditions of Theorem \ref{t3.2}, we consider the cellular case and the algebraic case, respectively.

If the cellular case dominates, 
	$$\|e^{it\Delta}f\|_{BL_A^{p,q}(P_r\cap N_{r^{1/2+\delta_m}}Z)}^p\lesssim D^m\|e^{it\Delta}f\|_{BL_A^{p,q}(O_i)}^p.$$
For each $i$, set
$$f_{O_i}=\sum_{(\theta,\nu)\in \mathbb{T}_Z(r),T_{\theta,\nu}\cap O_i\neq\emptyset}f_{\theta,\nu}.$$
We have
$$\|e^{it\Delta}f\|_{BL_A^{p,q}(P_r\cap N_{r^{1/2+\delta_m}}Z)}^p\lesssim \sum_{O_i}\|e^{it\Delta}f_{O_i}\|_{BL_A^{p,q}(O_i)}^p+\text{RapDec}(r)\|f\|_{L^2}^p,$$
$$\sum_{O_i}\|f_{O_i}\|_{L^2}^2\lesssim D\|f\|_{L^2}^2$$
and 
$$\|f_{O_i}\|_{L^2}^2\lesssim D^{-(m-1)}\|f\|_{L^2}^2,\ \text{for each}\ O_i.$$
So far the estimates are reduced from the $r$-size set $P_r$ to the $r/2$-size set $Q_i$. One can continue the argument to $f_{O_i}$ because $f_{O_i}$ concentrated in $\mathbb{T}_Z(r/2)$ (see Lemma \ref{l2.5}).

If the algebraic case dominates, there exists an $m-1$-dimensional transverse complete intersection $Y$ of degree at most $O(D)$ such that
$$\|e^{it\Delta}f\|_{BL_A^{p,q}(P_r\cap N_{r^{1/2+\delta_m}}Z)}^p \lesssim \|e^{it\Delta}f\|_{BL_A^{p,q}(P_r\cap N_{r^{1/2+\delta_m}}Y)}^p.$$
Even the dimension of the variety $Y$ is already $m-1$, but the function $f$ may not tangent to $Y$. 
Let $\mathcal{P}$ be a cover of $P_r\cap N_{r^{1/2+\delta_m}}Y$ by finitely-overlapping $P_{r_1}$ with some suitable chosen $r_1<r$. For each $P_{r_1}\in\mathcal{P}$, let 
$$\mathbb{T}_{P_{r_1}}=\{(\theta,\nu)\in \mathbb{T}_Z(r):T_{\theta,\nu}\cap P_{r_1}\cap N_{r^{1/2+\delta_m}}Y\neq\emptyset\}.$$
A tube $T_{\theta,\nu}$ is $r_1^{1/2+\delta_{m-1}}$-tangent to $Y$ on $P_{r_1}$ means
$$T_{\theta,\nu}\cap 2P_{r_1}\subset N_{2r_1^{1/2+\delta_{m-1}}}Y =N_{2r^{1/2+\delta_m}}Y $$
and for any $y\in Y\cap 2P_{r_1}\cap N_{r_1^{1/2+\delta_{m-1}}}(T_{\theta,\nu})$,
$$\angle(G(\theta),T_yY)\lesssim r_1^{-1/2+\delta_{m-1}}.$$
Here we used a fact that $r_1^{\frac12+\delta_{m-1}}=r^{\frac12+\delta_m}.$ Which can be obtained by suitable choose of $\delta_{m-1}$.
Denote 
$$\mathbb{T}_{P_{r_1},tang}=\{(\theta,\nu)\in\mathbb{T}_{P_{r_1}}: T_{\theta,\nu}\ \text{is}\ r_1^{1/2+\delta_{m-1}}-\text{tangency to}\ Y \text{on}\ P_{r_1} \}$$
and $\mathbb{T}_{P_{r_1},trans}=\mathbb{T}_{P_{r_1}}\setminus\mathbb{T}_{P_{r_1},tang}$, we have
\begin{align*}
	&\|e^{it\Delta}f\|_{BL_A^{p,q}(P_r\cap N_{r^{1/2+\delta_m}}Y)}^p\\
	& \lesssim \sum_{P_{r_1}\in\mathcal{P}}\|e^{it\Delta}f_{P_{r_1},tang}\|_{BL_{A/2}^{p,q}(P_{r_1})}^p+\sum_{P_{r_1}\in\mathcal{P}}\|e^{it\Delta}f_{P_{r_1},trans}\|_{BL_{A/2}^{p,q}(P_{r_1})}^p+\text{RapDec}(r)\|f\|_{L^2}^p,
\end{align*}
where
$$f_{P_{r_1},tang}=\sum_{(\theta,\nu)\in \mathbb{T}_{P_{r_1},tang}} f_{\theta,\nu}\ \ \text{and}\ \ f_{P_{r_1},trans}=\sum_{(\theta,\nu)\in \mathbb{T}_{P_{r_1},trans}} f_{\theta,\nu}.$$
By the pigeonhole principle, if $\sum_{P_{r_1}\in\mathcal{P}}\|e^{it\Delta}f_{P_{r_1},tang}\|_{BL_{A/2}^{p,q}(P_{r_1})}^p$ dominates, by Lemma \ref{l2.5}, we have
$$\sum_{P_{r_1}\in\mathcal{P}}\|e^{it\Delta}f_{P_{r_1},tang}\|_{BL_{A/2}^{p,q}(P_{r_1})}^p\lesssim \sum_{O_j}\|e^{it\Delta}f_{O_j}\|_{BL_{A/2}^{p,q}(O_j)}^p, $$
where $f_{O_j}$ is concentrated in $\mathbb{T}_Y(r_1)$ and the size of $O_j$ is $r_1$. We obtain  the dimensional reducing aim and with the lost of some constant independent of $D$, $r$, and $m$.

Otherwise, we face the most difficult case
$$\|e^{it\Delta}f\|_{BL_A^{p,q}(P_r)}^p\lesssim \sum_{P_{r_1}\in\mathcal{P}}\|e^{it\Delta}f_{P_{r_1},trans}\|_{BL_{A/2}^{p,q}(P_{r_1})}^p.$$
By Propositions \ref{p3.4}, we have the following estimates:
$$\sum_{P_{r_1}\in\mathcal{P}}\|e^{it\Delta}f_{P_{r_1},trans}\|_{BL_{A/2}^{p,q}(P_{r_1})}^p\lesssim \log^2r\sum_{O_j}\|e^{it\Delta}f_{O_j}\|_{BL_{A/2}^{p,q}(O_j)}^p,$$
$$\sum_{O_j}\|f_{O_j}\|_{L^2}^2\lesssim \|f\|_{L^2}^2$$
and for each $O_j$,
$$\|f_{O_j}\|_{L^2}^2\lesssim r^{O(\delta_m)}\Big(\frac{r}{r_1}\Big)^{-\frac{n+1-m}{2}}\|f\|_{L^2}^2,$$
where $f_{O_j}$ is concentrated in $\mathbb{T}_Z(r_1)$ with $b\in \mathbb{B}^{n+1}(0,{r}^{1/2+\delta_m})$ and the size of $O_j$ at most $r_1$. Next, for each $f_{O_j}$, we proceed to apply Theorem \ref{t3.2}.

\subsection{The first algorithm 
[alg 1]} Let $p\geq 2$, $0<\epsilon\ll 1$ and a family of small parameters
$$\epsilon^C\leq \delta\ll\delta_n\ll \delta_{n-1}\ll \dots\ll \delta_1\ll\delta_0\ll\epsilon.$$
\\
\fbox{Input}
\begin{itemize}
	\item A parallelepiped $P_r$ with size $r\gg 1$.
	\item An admissible large integer $A\in \mathbb{N}$.
	\item A function $f$ with supp$\hat{f}\subset \mathbb{B}^n(Ne_1,1)$ and concentrated on wave packets which are $r^{-\frac 12+\delta_m}$-tangent to $Z$ in $P_r$ with a transverse complete intersection $Z$ of dimension $m\geq 2$.
\end{itemize}
\fbox{Output} The $j$th stage of [alg 1] outputs:
\begin{itemize}
	\item A choice of spatial scale $\rho_j\geq 1$.
	\item Certain integers $\sharp a(j),\ \sharp c(j)\in\mathbb{N}_0$ satisfying $\sharp a(j)+\sharp c(j)=j$, where the integers $\sharp a(j),\ \sharp c(j)$ indicate the number of occurrences of the algebraic cases and the cellular cases, respectively.
	\item  A family of subsets $\mathcal{O}_j$ of $\mathbb{R}^{n+1}$ which will be referred to as cells. Each cell $O_j\in\mathcal{O}_j$ has size at most $\rho_j$.
	\item   A collection of functions $\{f_{O_j}\}_{O_j\in\mathcal{O}_j}$. Each $f_{O_j}$ is concentrated on wave packets of  scale $\rho_j$ which are $\rho_j^{-\frac12+\delta_m}$ -tangent to some translation of $Z$ on $O_j$.
	\item A large integer $d\in\mathbb{N}$ which depends only on the admissible parameters and deg$Z$.
\end{itemize}

With these outputs, we denote
$$C_{j,\delta}^{I}(d,r)=d^{\sharp c(j)\delta}(\log r)^{2\sharp a(j)(1+\delta)},$$
$$C_{j,\delta}^{II}(d)=d^{\sharp c(j)\delta+(n+1)\sharp a(j)(1+\delta)},$$
$$C_{j,\delta}^{III}(d,r)=d^{\sharp c(j)\delta+\sharp a(j)\delta}r^{\tilde{C}\sharp a(j)\delta_m}.$$ It is clear that
\begin{equation}\label{e3.12}
	C_{j,\delta}^{I}(d,r),\,C_{j,\delta}^{II}(d),\,C_{j,\delta}^{III}(d,r)\lesssim_{d,\delta}r^{\delta_0}d^{\sharp c(j)\delta}.
\end{equation}
Let $A_j=2^{-\sharp a(j)}A\in\mathbb{N}$. We obtain the decomposition which satisfy the following properties.\\
Property 1. Most of the mass of $\|e^{it\Delta}f\|_{BL_A^{p,q}(P_r)}^p$ is concentrated on the $O_j\in\mathcal{O}_j$, i.e.
$$\|e^{it\Delta}f\|_{BL_A^{p,q}(P_r)}^p\leq C_{j,\delta}^{I}(d,r)\sum_{O_j\in\mathcal{O}_j} \|e^{it\Delta}f_{O_j}\|_{BL_{A_j}^{p,q}(O_j)}^p+ \text{RapDec}(r)\|f\|_{L^2}^p.$$
Property 2. The functions $f_{O_j}$ satisfy
$$\sum_{O_j\in\mathcal{O}_j}\|f_{O_j}\|_{L^2}^2\leq C_{j,\delta}^{II}(d)d^{\sharp c(j)}\|f\|_{L^2}^2.$$
Property 3. Each individual $f_{O_j}$ satisfies
$$\|f_{O_j}\|_{L^2}^2\leq C_{j,\delta}^{III}(d,r)\Big(\frac{r}{\rho_j}\Big)^{-\frac{n+1-m}{2}}d^{-\sharp c(j)(m-1)}\|f\|_{L^2}^2.$$
\fbox{Stopping conditions} Suppose that we are at the $j$th stage of [alg 1]. We stop our algorithm if one of [tiny] and [tang] holds true:
 
[tiny] The algorithm terminates if $\rho_j\leq r^{\tilde{\delta}_{m-1}}$. 

[tang] The algorithm terminates if
$$\sum_{O_j\in\mathcal{O}_j} \|e^{it\Delta}f_{O_j}\|_{BL_{A_j}^{p,q}(O_j)}^p\leq C_{tang} \sum_{S\in\mathcal{S}} \|e^{it\Delta}f_{S}\|_{BL_{A_j/2}^{p,q}(P_{\tilde{\rho}})}^p$$
and 
$$\sum_{S\in\mathcal{S}}\|f_{S}\|_{L^2}^2\leq C_{tang}r^{(n+1)\tilde{\delta}_{m-1}}\sum_{O_j\in\mathcal{O}_j}\|f_{O_j}\|_{L^2}^2$$
hold for $(1-\tilde{\delta}_{m-1})(1/2+\delta_{m-1})=(1/2+\delta_m)$, $\tilde{\rho}=\rho_j^{1-\tilde{\delta}_{m-1}}$, and for some fixed dimensional constant $C_{tang}$. Here $\mathcal{S}$ is a collection of
$(m-1)$-dimensional transverse complete intersections in $\mathbb{R}^{n+1}$ all of degree $O(d)$. $f_{S}$ is a function which is $\tilde{\rho}^{-1/2+\delta_{m-1}}$-tangent to $S\in\mathcal{S}$ on $P_{\tilde{\rho}}$.

By repeatedly applying the algorithm [alg 1], we have the following algorithm [alg 2]

\subsection{The second algorithm [alg 2]}\label{s4.21} Let $p_{n+1},\dots,p_{k}$ denote Lebesgue exponents to be fixed later and satisfy
$$2\leq p:=p_{n+1}\leq...\leq p_{k+1}\leq p_k.$$
Let $0\leq \alpha_l, \,\beta_l\leq 1$ be defined by
$$\frac{1}{p_l}=\frac{1-\alpha_{l-1}}{2}+\frac{\alpha_{l-1}}{p_{l-1}}\ \text{and}\ \beta_l=\prod_{i=l}^n\alpha_i=\Big(\frac12-\frac{1}{p_l}\Big)^{-1}\Big(\frac12-\frac{1}{p_n}\Big)\ \ \text{for}\ k+1\leq l\leq n,$$
and $\alpha_{n+1}=\beta_{n+1}=1$.\\
\fbox{Input} Fix $1\ll A\ll R$. Function $f$ satisfies $supp\hat{f}\subset \mathbb{B}^n(Ne_1,1)$ and the non-degeneracy hypothesis
$$\|e^{it\Delta}f\|_{BL_A^{p,q}(P_R)}\geq CR^{\epsilon}\|f\|_{L^2}.$$
\fbox{Output} The $(n+2-l)$-th step of the recursion will produce:
\begin{itemize}
	\item An $(n+2-l)$-tuple of scales $\vec{r}=(r_{n+1},...,r_l)$ satisfying $R=r_{n+1}>r_{n}>...>r_l$; large parameters $\vec{D}=(D_{n+1},...,D_{l})$, where $D_i(l\leq i\leq n+1)$ are defined by
	the upper bound of $O(r_{i+1}^{\delta_0}d^{\delta\log r_{i+1}})$; integers $\vec{A}=(A_{n+1},\dots,A_l)$ satisfying $A=A_{n+1}>A_{n}>\dots>A_l$.
	\item  A family transverse complete intersections $\mathcal{S}_l$. Each $S_i\in\mathcal{S}_i(l\leq i\leq n+1)$ satisfies dim$S_i=i$.
	\item  A function $f_{S_l}$ associated with each $S_l\in\mathcal{S}_l$ is concentrated on scale $r_l$ wave packets which are $r_l^{-1/2+\delta_l}$-tangent to $S_l$ in $P_{r_l}$.
\end{itemize}

These data satisfy the following properties.\\
Property 1. \begin{equation}\label{e3.8}
	\|e^{it\Delta}f\|_{BL_A^{p,q}(P_R)}\lesssim M(\vec{r_l},\vec{D_l})R^{O(\delta_0)}\|f\|_{L^2}^{1-\beta_l}\Big(\sum_{S_l\in\mathcal{S}_l}\|e^{it\Delta}f_{S_l}\|_{BL_{A_l}^{p_l,q}(P_{r_l})}^{p_l}\Big)^{\frac{\beta_l}{p_l}},
\end{equation}
where 
$$M(\vec{r_l},\vec{D_l}):= \Big(\prod_{i=l}^{n}D_i\Big)^{(n+1-l)\delta}\Big(\prod_{i=l}^{n}r_i^{\frac{i}{2(i+1)}(\beta_{i+1}-\beta_i)}D_i^{\frac12(\beta_{i+1}-\beta_l)}\Big).$$
Propety 2. For $l\leq n$,
\begin{equation*}
	\sum_{S_l\in\mathcal{S}_l}\|f_{S_l}\|_{L^2}^2\lesssim D_l^{1+\delta} R^{O(\delta_0)}\sum_{S_{l+1}\in\mathcal{S}_{l+1}}\|f_{S_{l+1}}\|_{L^2}^2.
\end{equation*}
Property 3. For $l\leq n$, 
\begin{equation*}
	\max_{S_l\in\mathcal{S}_l}\|f_{S_l}\|_{L^2}^2\lesssim \Big(\frac{r_{l+1}}{r_l}\Big)^{-\frac{n-l}{2}}D_l^{-l+\delta}R^{O(\delta_0)}\max_{S_{l+1}\in\mathcal{S}_{l+1}}\|f_{S_{l+1}}\|_{L^2}^2.
\end{equation*}
\fbox{Stopping condition} The recursive stage of [alg 2] terminates if he following [tiny-dom] happens.

[tiny-dom] Suppose that the inequality
$$\sum_{S_l\in\mathcal{S}_l}\|e^{it\Delta}f_{S_l}\|_{BL_{A_l}^{p_l,q}(P_{r_l})}^{p_l}\leq 2\sum_{S_l\in\mathcal{S}_{l,tiny}}\|e^{it\Delta}f_{S_l}\|_{BL_{A_l/2}^{p_l,q}(P_{r_l})}^{p_l}$$
holds, where the right-hand summation is restricted to those $S_l\in\mathcal{S}_l$ for which [alg 1] terminates owing to the stopping condition [tiny].

We now prove Properties 1  of [alg 2]  which relies on Theorem \ref{c3.1}. Properties 2 and 3 can be obtained by repeatedly applying of Properties 2 and 3 in [alg 1]. 

Assume that the condition [tiny-dom] does not happen. Necessarily,
$$\sum_{S_l\in\mathcal{S}_l}\|e^{it\Delta}f_{S_l}\|_{BL_{A_l}^{p_l,q}(P_{r_l})}^{p_l}\leq 2\sum_{S_l\in\mathcal{S}_{l,tang}}\|e^{it\Delta}f_{S_l}\|_{BL_{A_l/2}^{p_l,q}(P_{r_l})}^{p_l},$$
where the right-hand summation is restricted to those $S_l\in\mathcal{S}_l$ for which [alg 1] terminates owing to [tang]. 
For each $S_l\in \mathcal{S}_{l,tang}$, let $\mathcal{S}_{l-1}[S_l]$ denote a collection of transverse complete intersections in $\mathbb{R}^{n+1}$ of dimension $l-1$ which is produced by the stopping condition [tang] in [alg 1] on $\|e^{it\Delta}f_{S_l}\|_{BL_A^{p_l,q}(P_{r_l})}^{p_l}$. Then
$$\|e^{it\Delta}f_{S_l}\|_{BL_{A_l}^{p_l,q}(P_{r_l})}^{p_l}\lesssim R^{O(\delta_0)}D_{l-1}^{\delta}\sum_{S_{l-1}\in \mathcal{S}_{l-1}[S_l]}\|e^{it\Delta}f_{S_{l-1}}\|_{BL_{2A_{l-1}}^{p_l,q}(P_{r_{l-1}})}^{p_l}.$$
Let $\mathcal{S}_{l-1}$ denote the structured set 
$$\mathcal{S}_{l-1}=\{S_{l-1}:S_l\in \mathcal{S}_{l,tang}\ \text{and}\ S_{l-1}\in \mathcal{S}_{l-1}[S_l] \}.$$
By induction on $l$, we have
$$\|e^{it\Delta}f\|_{BL_A^{p,q}(P_R)}\lesssim D_{l-1}^{\delta} M(\vec{r_l},\vec{D_l})R^{O(\delta_0)}\|f\|_{L^2}^{1-\beta_l}\Big(\sum_{S_{l-1}\in\mathcal{S}_{l-1}}\|e^{it\Delta}f_{S_{l-1}}\|_{BL_{2A_{l-1}}^{p_l,q}(P_{r_{l-1}})}^{p_l}\Big)^{\frac{\beta_l}{p_l}}.$$
By Proposition \ref{p3.2} (iii) and H\"older's inequality, we have 
\begin{align*}
&\Big(\sum_{S_{l-1}\in\mathcal{S}_{l-1}}\|e^{it\Delta}f_{S_{l-1}}\|_{BL_{2A_{l-1}}^{p_l,q}(P_{r_{l-1}})}^{p_l}\Big)^{\frac{1}{p_l}}\\
	& \leq \Big\| \|e^{it\Delta}f_{S_{l-1}}\|_{BL_{A_{l-1}}^{p_{l-1},q}(P_{r_{l-1}})}  \Big\|_{l^{p_{l-1}}(\mathcal{S}_{l-1})}^{\alpha_{l-1}}\Big\| \|e^{it\Delta}f_{S_{l-1}}\|_{BL_{A_{l-1}}^{2,q}(P_{r_{l-1}})}  \Big\|_{l^{2}(\mathcal{S}_{l-1})}^{1-\alpha_{l-1}}.
\end{align*}
Combining Theorem \ref{c3.1} and Property 2, it follows that
$$\Big\| \|e^{it\Delta}f_{S_{l-1}}\|_{BL_{A_{l-1}}^{2,q}(P_{r_{l-1}})}  \Big\|_{l^{2}(\mathcal{S}_{l-1})}\lesssim r_{l-1}^{\frac{l-1}{2l}}(\prod_{i=l-1}^nD_i^{1+\delta})^{\frac 12}R^{O(\delta_0)}\|f\|_{L^2}.$$
Thus, we obtain 
\begin{align*}
	&\|e^{it\Delta}f\|_{BL_A^{p,q}(P_R)}\\
	& \lesssim  M(\vec{r}_{l-1},\vec{D}_{l-1})R^{O(\delta_0)}\|f\|_{L^2}^{1-\beta_{l-1}}\Big(\sum_{S_{l-1}\in\mathcal{S}_{l-1}}\|e^{it\Delta}f_{S_{l-1}}\|_{BL_{A_{l-1}}^{p_{l-1},q}(P_{r_{l-1}})}^{p_{l-1}}\Big)^{\frac{\beta_{l-1}}{p_{l-1}}}.
\end{align*}
\fbox{The final stage} The algorithm necessarily terminates at the $n$-th step. In fact, if \eqref{e3.8} would hold for $l=1$ and functions $f_{S_1}$ concentrated on wave packets which are tangent to some transverse complete intersections of dimension $1$. By the vanishing property of the $2$-broad norm which can be easily proven, we have 
\begin{equation}\label{e4.12}
	\|e^{it\Delta}f_{S_{1}}\|_{BL_A^{p_{1},q}(P_{r_{1}})}\lesssim \text{RapDec}(R)\|f\|_{L^2}.
\end{equation}
 and hence $\|e^{it\Delta}f\|_{BL_A^{p,q}(P_R)}\lesssim\text{RapDec}(R)\|f\|_{L^2}$ by \eqref{e3.8}. This contradicts the non-degeneracy hypothesis.

In fact, from the definition of $BL_A^{p,q}$ and choosing the large number $K\ll r_1^{1/2}$. Let $D_K\times I_{K/N}\subset N_{r_1^{1/2+\delta_1}}(S_1)\cap P_{r_1}$ and $V=T_{z_0}S_1$ with $z_0\in S_1\cap N_{r_1^{1/2+\delta_1}}(D_K\times I_{K/N})$. Then $\dim V=1$. For each $(\theta,\nu)\in \mathbb{T}_{S_1}(r_1)$ and $T_{\theta,\nu}\cap(D_K\times I_{K/N})\neq \emptyset$, we have 
	$$\angle(G(\theta),V)\lesssim r_1^{-\frac12+\delta_1}.$$
It implies that for any $\theta\subset \tau$,
$$\angle(G(\tau),V)\leq K^{-1}.$$ So such $\tau$’s do not contribute to $\mu_{e^{it\Delta}f_{S_1}}(D_K\times I_{K/N})$ and hence obtain $\eqref{e4.12}$.

Suppose the recursion process terminates at step $n+2-k$ for some $k\geq2$. For each $S_k\in\mathcal{S}_{k,tiny}$, let $\mathcal{O}[S_k]$ denote the final collection of cells outputted by [alg 1]. Each $O\in\mathcal{O}[S_k]$
has the size at most $R^{\delta_0}$ by the stopping condition [tiny]. Let
$$\mathcal{O}=\bigcup\{\mathcal{O}[S_k]:S_k\in\mathcal{S}_{k,tiny}\}.$$
 From Properties 1, 2 and 3 of [alg 1] and [alg 2], it follows that the following three properties hold.

\textbf{Property 1}
\begin{equation}\label{e3.3}
	\|e^{it\Delta}f\|_{BL_A^{p,q}(P_R)}\lesssim M(\vec{r}_k,\vec{D}_k)R^{O(\delta_0)}\|f\|_{L^2}^{1-\beta_k}\Big(\sum_{O\in\mathcal{O}}\|e^{it\Delta}f_{O}\|_{BL_{A_{k-1}}^{p_k,q}(O)}^{p_k}\Big)^{\frac{\beta_k}{p_k}},
\end{equation}
where 
$$M(\vec{r}_k,\vec{D}_k):= \Big(\prod_{i=k}^{n}D_i\Big)^{(n+1-l)\delta}\Big(\prod_{i=k}^{n}r_i^{\frac{i}{2(i+1)}(\beta_{i+1}-\beta_i)}D_i^{\frac12(\beta_{i+1}-\beta_k)}\Big).$$

\textbf{Property 2} For $k\leq n+1$,
\begin{equation}\label{e2.4}
	\sum_{O\in\mathcal{O}}\|f_O\|_{L^2}^2\lesssim\Big(\prod_{i=k-1}^{n}D_i^{1+\delta}\Big)R^{O(\delta_0)}\|f\|_{L^2}^2.
\end{equation}

\textbf{Property 3} For $k\leq n+1$,
\begin{equation}\label{e3.5}
	\max_{O\in\mathcal{O}}\|f_O\|_{L^2}^2\lesssim\prod_{i=k-1}^{n}r_i^{-\frac12}D_i^{-i+\delta}R^{O(\delta_0)}\|f\|_{L^2}^2.
\end{equation}

Since each $O\in\mathcal{O}$ has the size at most $R^{\delta_0}$, we have 
$$\|e^{it\Delta}f_{O}\|_{BL_A^{p_l,q}(O)}\lesssim R^{\delta_0}\|f_{O}\|_{L^2}.$$
Combining this estimate and the above properties, one concludes
$$\|e^{it\Delta}f\|_{BL_A^{p,q}(P_R)}\lesssim  \prod_{i=k-1}^{n}r_i^{X_i}D_i^{Y_i+O(\delta)}R^{O(\delta_0)}\|f\|_{L^2}, $$
where
$$X_i:=\frac{i}{2(i+1)}(\beta_{i+1}-\beta_i)-\frac12(\frac12-\frac{1}{p_{n+1}})$$
and 
$$Y_i:=\frac{\beta_{i+1}}{2}-(i+1)(\frac12-\frac{1}{p_{n+1}}).$$

One easily verifies that 
\begin{equation}\label{e3.9}
	X_i\leq 0 \iff \Big(\frac12-\frac{1}{p_{i+1}}\Big)^{-1}-\Big(\frac12-\frac{1}{p_i}\Big)^{-1}\leq \frac{i+1}{i}
\end{equation}
and 
\begin{equation}\label{e3.7}
	Y_{i-1}\leq 0 \iff \Big(\frac12-\frac{1}{p_i}\Big)^{-1}-2i\leq 0.
\end{equation}

We start with $$\Big(\frac12-\frac{1}{p_k}\Big)^{-1}=2k.$$ By \eqref{e3.9} and  \eqref{e3.7}, we have 
$$\Big(\frac12-\frac{1}{p_{k+1}}\Big)^{-1}\leq \min\{2(k+1),\ 2k+\frac{k+1}{k}\}. $$
We choose $p_{k+1}$ such that
$$\Big(\frac12-\frac{1}{p_{k+1}}\Big)^{-1}=2k+\frac{k+1}{k}.$$
Repeating the above argument, we obtain
$$\Big(\frac12-\frac{1}{p_{n+1}}\Big)^{-1}=2k+\frac{k+1}{k}+...+\frac{n+1}{n}$$
and hence
$$p_{n+1}=2+\frac{4}{n+k-1+s_{k}^{n}}.$$
The worst case happens when $k=2$, i.e.,
$$\|e^{it\Delta}f\|_{BL_A^{p,q}(P_R)}\lesssim R^{\epsilon}\|f\|_{L^2},\  \text{when}\  p\geq \tilde{p}_n=p_{n+1}=2+\frac{4}{n+1+s_2^n}.$$
And hence we complete the proof of Theorem \ref{t4.6}.

\section{The proof of Theorem \ref{t1.1} }\label{sub.1}

Let $n\geq2$, we need to find $$p_n\in\big[\frac{2(n+1)}{n},\frac{2(n+3)}{(n+1)}\big)\ \ \text{and}\ \  \frac{n+1}{p_n}+\frac{1}{r_n}=\frac{n}{2}$$ such that
\begin{equation}\label{main estimate1}
	\||e^{it\Delta}f_1e^{it\Delta}f_2|^{\frac12}\|_{L_x^{p_n}L_t^{r_n}(P_R)}\lesssim_\epsilon N^{\frac{1}{p_n}-\frac{1}{r_n}}R^{\epsilon}||f_1||_{L^2}^\frac12||f_2||_{L^2}^\frac12.
\end{equation}
Here $\hat{f_1},\ \hat{f_2}$ are supported on subsets of $\mathbb{B}^n(Ne_1,1)$ with $N\geq 1$ and $\text{dist}(\text{supp}\hat{f_1},\text{supp}\hat{f_2})\geq \frac{1}{2}$.

In fact, by \eqref{main estimate1} and Proposition \ref{p2.2}, we have
\begin{equation}\label{main estimate}
	\||e^{it\Delta}f_1e^{it\Delta}f_2|^{\frac12}\|_{L_x^{p}L_t^{r}(\mathbb R^{n+1})}\lesssim  N^{\frac{1}{p}-\frac{1}{r}}||f_1||_{L^2}^\frac12||f_2||_{L^2}^\frac12
\end{equation}
for $p>p_n$ and $r>r_n$.
On the other hand, using Bernstein's inequality and the sharp bilinear estimate  established by Tao \cite{t03}, we have, for $r\geq p>\frac{2(n+3)}{n+1}$, 
\begin{align}\label{e2.5}
	\||e^{it\Delta}f_1 e^{it\Delta}f_2|^\frac12\|_{L_x^{p}L_t^{r}(\mathbb{R}^{n+1})}
	& \lesssim N^{\frac{1}{p}-\frac{1}{r}}\||e^{it\Delta}f_1 e^{it\Delta}f_2|^{\frac12}\|_{L_{x,t}^{p}(\mathbb{R}^{n+1})}\nonumber\\
	&\lesssim N^{\frac{1}{p}-\frac{1}{r}}\|f_1\|_{L^2}^\frac12\|f_2\|_{L^2}^\frac12.
\end{align}
Interpolating \eqref{main estimate} with \eqref{e2.5}, for each critical  pair $(\tilde p,\tilde r)$ with $p_n<\tilde p\leq \frac{2(n+2)}{n}$, we can find a neighborhood $\mathcal{N}(\tilde p,\tilde r)$ such that  
\begin{equation}\label{eq:1.11}
	\||e^{it\Delta}f_1 e^{it\Delta}f_2|^\frac12\|_{L_x^{p}L_t^{r}(\mathbb{R}^{n+1})}\lesssim N^{\frac{1}{p}-\frac{1}{r}}\|f_1\|_{L^2}^\frac12\|f_2\|_{L^2}^\frac12
\end{equation}
holds for each $(p, r)\in \mathcal{N}(\tilde p,\tilde r)$. By rescaling, we obtain
$$\||e^{it\Delta}f_1 e^{it\Delta}f_2|^\frac12\|_{L_x^{p}L_t^{r}(\mathbb{R}^{n+1})}\lesssim 2^{j(\frac{n+1}{p}+\frac{1}{r}-\frac{n}{2})}\|f_1\|^\frac12_{L^2}\|f_2\|^\frac12_{L^2},$$
where $\hat{f_1},\ \hat{f_2}$ are supported on the subsets of $\mathbb{B}^n(\xi_0,2^{-j})\subset A(1)$ and $\text{dist}(\text{supp}\hat{f_1},\text{supp}\hat{f_2})\geq 2^{-j}$.
Theorem \ref{t1.1} is then a direct  result of the following crucial proposition.
\begin{proposition}\cite{lrv11}\label{p1.2}.
	Let $2<\tilde{p}<\tilde{r}<\infty$ and $\frac{n+1}{\tilde{p}}+\frac{1}{\tilde{r}}=\frac{n}{2}$. If 
	\begin{equation}\label{eq:2.1}
		\||e^{it\Delta}f_1 e^{it\Delta}f_2|^{\frac12}\|_{L_x^{p}L_t^{r}(\mathbb{R}^{n+1})}\lesssim 2^{j(\frac{n+1}{p}+\frac{1}{r}-\frac{n}{2})}\|f_1\|^\frac12_{L^2}\|f_2\|^\frac12_{L^2}
	\end{equation}
	holds for $(p,r)$ in a neighborhood of $(\tilde{p},\tilde{r})$,
	where $\hat{f_1},\ \hat{f_2}$ are supported on subsets of $\mathbb{B}^n(\xi_0,2^{-j})\subset A(1):=\{\xi:|\xi|\sim 1\}$ and $\text{dist}(\text{supp}\hat{f_1},\text{supp}\hat{f_2})\geq 2^{-j}$. Then, \eqref{S_{p,r}} holds for $(\tilde{p},\tilde{r})$.
\end{proposition}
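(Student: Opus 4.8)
The plan is to deduce the linear space--time estimate \eqref{S_{p,r}} at the critical pair $(\tilde p,\tilde r)$ from the bilinear hypothesis \eqref{eq:2.1} by the bilinear-to-linear scheme of Tao--Vargas--Vega, the essential point being that \eqref{eq:2.1} is assumed on an entire neighbourhood of $(\tilde p,\tilde r)$, which is exactly what makes the endpoint accessible. First I would reduce matters to a single frequency annulus: since $(\tilde p,\tilde r)$ lies on the scaling line $\tilde s=\frac n2-\frac n{\tilde p}-\frac 2{\tilde r}$ and $\tilde p,\tilde r>2$, a Littlewood--Paley decomposition in the spatial frequency together with parabolic rescaling reduces \eqref{S_{p,r}} at $(\tilde p,\tilde r)$ to proving $\|e^{it\Delta}f\|_{L_x^{\tilde p}L_t^{\tilde r}(\mathbb{R}^{n+1})}\lesssim\|f\|_{L^2}$ for all $f$ with $\supp\hat f\subset A(1)$; the Sobolev weights and the reassembly of the dyadic frequency pieces are then automatic, as in Vega \cite{v88}.

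For such $f$ I would square and apply a Whitney decomposition of $A(1)\times A(1)$ off the diagonal, writing $|e^{it\Delta}f|^2=\sum_{j\ge 0}\sum_{(\theta,\theta')}e^{it\Delta}f_\theta\,\overline{e^{it\Delta}f_{\theta'}}$, where for each $j$ the pairs $(\theta,\theta')$ run over the dyadic cubes of side $2^{-j}$ in $A(1)$ with $\mathrm{dist}(\theta,\theta')\sim 2^{-j}$, so that each $\theta$ has only $O(1)$ partners. Taking the $L_x^{p/2}L_t^{r/2}$-norm (a genuine norm since $p,r>2$), using the triangle inequality and then applying \eqref{eq:2.1} to each pair---note that $\theta\cup\theta'\subset\mathbb{B}^n(\xi_0,C2^{-j})\subset A(1)$ and $\mathrm{dist}(\theta,\theta')\gtrsim 2^{-j}$, so \eqref{eq:2.1} applies after an $O(1)$ shift in $j$---together with $\sum_{j}\sum_{(\theta,\theta')}\|f_\theta\|_{L^2}\|f_{\theta'}\|_{L^2}\lesssim\sum_\theta\|f_\theta\|_{L^2}^2\lesssim\|f\|_{L^2}^2$, gives
$$\|e^{it\Delta}f\|_{L_x^{p}L_t^{r}(\mathbb{R}^{n+1})}^2\lesssim\sum_{j\ge 0}2^{\,2j(\frac{n+1}{p}+\frac1r-\frac n2)}\|f\|_{L^2}^2$$
for every $(p,r)$ in the given neighbourhood. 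When $(p,r)$ is strictly subcritical, $\frac{n+1}{p}+\frac1r<\frac n2$, the $j$-series is geometric and convergent, so this already proves \eqref{S_{p,r}} at $(p,r)$, with constant comparable to a negative power of the defect $\frac n2-\frac{n+1}{p}-\frac1r$.

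The hard part will be the endpoint. At $(\tilde p,\tilde r)$ the exponent of $2^j$ above is exactly $0$, so the estimate of the previous step only produces a logarithmically divergent sum---equivalently, a borderline local estimate on Euclidean balls of radius $R$ with an $R^\epsilon$ loss (obtained by truncating to $j\lesssim\log R$). Converting the family of genuine subcritical estimates together with this borderline local estimate into a single clean global estimate on the critical line is the crux: it is a Bourgain--Tao type $\epsilon$-removal, in which one localises the scale-$j$ contribution to physical balls, invokes \eqref{eq:2.1} at a slightly subcritical exponent inside the neighbourhood to extract a per-ball gain, reassembles the balls losslessly, and finally interpolates the resulting one-parameter family analytically so as to land exactly at $(\tilde p,\tilde r)$. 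This step---turning the neighbourhood of bilinear estimates into one sharp linear estimate at the critical exponent---is where the full strength of the hypothesis is used; it is carried out by Lee, Rogers and Vargas \cite{lrv11}, and a variant of the $\epsilon$-removal on which it rests is discussed in Appendix A and in Cho, Koh and Lee \cite{ckl22}.
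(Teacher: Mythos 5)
The paper does not prove this proposition itself; it is cited verbatim from Lee, Rogers and Vargas \cite{lrv11}, so the comparison is against their argument. Your set-up is correct and does match theirs: the Littlewood--Paley reduction to a single annulus $A(1)$ (legitimate since $\tilde p,\tilde r>2$ so that the vector-valued square function estimate and Minkowski apply in $L^p_xL^r_t$), the dyadic Whitney decomposition of $A(1)\times A(1)$ off the diagonal, the triangle inequality in $L^{p/2}_xL^{r/2}_t$, the application of \eqref{eq:2.1} scale by scale, and the observation that the resulting $j$-series converges geometrically at any strictly subcritical pair --- all of this is exactly the Tao--Vargas--Vega skeleton on which \cite{lrv11} builds.

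The gap is the endpoint step, which is the entire content of the proposition, and your description of it is not accurate. What \cite{lrv11} actually does is not an $\epsilon$-removal and does not pass through a localisation to physical balls. They instead apply a Bourgain-type real-interpolation summation lemma directly to the one-parameter family of operators $T_j$ given by the scale-$j$ Whitney pieces: picking two exponent pairs $(p_0,r_0)$ and $(p_1,r_1)$ in the neighbourhood on opposite sides of the critical line $\frac{n+1}{p}+\frac1r=\frac n2$, the bilinear hypothesis gives $\|T_j\|\lesssim 2^{-a_0 j}$ at one and $\|T_j\|\lesssim 2^{+a_1 j}$ at the other with $a_0,a_1>0$, and the summation lemma (a weak-type/Lorentz real-interpolation device) yields boundedness of $\sum_j T_j$ precisely at the interpolated endpoint $(\tilde p,\tilde r)$. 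This is where the neighbourhood hypothesis is consumed; no passage to $R$-balls, no per-ball gain, and no reassembly step occur. The ball-localisation and $\epsilon$-removal language in your last paragraph describes a different argument --- essentially the one in Appendix~A of this paper and in Cho--Koh--Lee \cite{ckl22}, which converts a local $R^\epsilon$-lossy estimate into a global one. That is a complementary tool, not the mechanism behind this proposition. As written, your proof stops short of the actual interpolation lemma and so does not establish the statement; it sets up the reduction correctly and then gestures at the wrong closing move.
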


Our aim becomes to find out the index $p_n$ such that \eqref{main estimate1} holds. For $n=2$, from Theorem \ref{t2.1}, \eqref{main estimate1} holds for $p_2=3$. For $n\geq 3$, by Theorem \ref{t4.6}, we have
$$\||e^{it\Delta}f_1e^{it\Delta}f_2|^{\frac12}\|_{L_x^{\tilde{p}_n}L_t^{\infty}(P_R)}\lesssim_\epsilon N^{\frac{1}{\tilde{p}_n}}R^{\epsilon}||f_1||_{L^2}^\frac12||f_2||_{L^2}^\frac12.$$
Interpolating the estimate with the local bilinear estimate of Tao \cite{t03},
$$\||e^{it\Delta}f_1e^{it\Delta}f_2|^{\frac12}\|_{L_x^{\frac{2(n+3)}{n+1}}L_t^{\frac{2(n+3)}{n+1}}(P_R)}\lesssim_\epsilon R^{\epsilon}||f_1||_{L^2}^\frac12||f_2||_{L^2}^\frac12,$$
 we obtain that \eqref{main estimate1} holds for
 $$p_n=\frac{2(n+1)[(n+2)\tilde{p}_n-2(n+3)]}{(n+1)n\tilde{p}_n-2(n^2+2n-1)}.$$
 
\section{The proof of Theorem \ref{c1.3}}\label{sub.2}

Let $\hat{f_1},\ \hat{f_2}$ be supported on subsets of $\mathbb{B}^n(0,1)$ with $\text{dist}(\text{supp}\hat{f_1},\text{supp}\hat{f_2})\geq \frac{1}{2}$. We make the wave packet decompositions at scale $R$,
$$e^{it\Delta}f_1=\sum_{\theta_1,\nu_1}e^{it\Delta}f_{\theta_1,\nu_1}\quad\text{and}\quad e^{it\Delta}f_2=\sum_{\theta_2,\nu_2}e^{it\Delta}f_{\theta_2,\nu_2},$$
where $  f_{\theta_i,\nu_i} (i=1,2)$ is the same as in \eqref{eq:2.2}.
The $R$-cube $Q_R$ can be decomposed into $O(R^{n\delta})$ many $R^{1-\delta}$-cubes $Q\in \mathcal{Q}_{R^{1-\delta}}$. 

The  relation "$\sim$" between the tubes   $T_{\theta_i,\nu_i}$ and the cubes $Q$ was introduced by Tao \cite{t03}. Here we state several facts about it without proof.  The readers can refer to \cite{t03} for the details.  Let $\mathbb T_i:=\cup T_{\theta_i,\nu_i},\,i=1,2$, and $\mathbb T=\mathbb T_1\cup \mathbb T_2.$ Since only the tubes intersect with $Q_R$ would have significant effects in the estimate, we assume that $\#\mathbb T\leq R^C$ for some constant $C$.
We decompose the $Q_R$ into $R^{\frac12}$ cubes $q$ and denote the sets as $\mathcal Q_{R^\frac12}$. For each $q\in \mathcal Q_{R^\frac12}$, set 
$$\mathbb{T}_i(q)=\big\{T_{\theta_i,\nu_i}\in\mathbb T_i;\,\,  T_{\theta_i,\nu_i}\cap R^\delta q\neq\emptyset \big\},\quad\text{for }\,\, i=1,2.$$ 
For some dyadic numbers $1\leq \mu_1,\mu_2\leq R^C$,   \begin{equation}
	q[\mu_1,\mu_2]=\big\{q\in\mathcal{Q}_{R^{\frac12}};\,\, \mu_i\leq \#\mathbb{T}_i(q)\leq 2\mu_i,\,\,i=1,2\big\},
\end{equation}
and for any fixed $T_{\theta_i,\nu_i}$
\begin{equation}
	\lambda(T_{\theta_i,\nu_i},\mu_1,\mu_2)=\sharp\Big\{q\in q[\mu_1,\mu_2];\,\,T_{\theta_i,\nu_i}\cap R^\delta q\neq\emptyset \Big\},\quad \text{for }\,\, i=1,2.
\end{equation}
It is easy to see that $1\leq \lambda(T_{\theta_i,\nu_i},\mu_1,\mu_2)\leq R^C$. Then for any dyadic number $1\leq \lambda_1,\lambda_2\leq R^C$, we introduce in the notation
\begin{equation}
	\mathbb T_i[\lambda_i,\mu_1,\mu_2]=\Big\{T_{\theta_i,\nu_i}\in\mathbb T_i;\,\,\lambda_i\leq \lambda(T_{\theta_i,\nu_i},\mu_1,\mu_2)\leq 2\lambda_i\Big\}\quad\text{for }\, i=1,2.
\end{equation}
So far, we have four dyadic parameters $\lambda_1,\lambda_2,\mu_1,\mu_2$, each has $O(\log R)$ many choices. Then for a fixed triple $(\lambda_1,\mu_1,\mu_2)$ and $T_{\theta_1,\nu_1}\in\mathbb T_1[\lambda_1,\mu_1,\mu_2]$, we say that $T_{\theta_1,\nu_1}\sim_{\lambda_1,\mu_1,\mu_2} Q$ if and only if the cube $Q\in\mathcal Q_{R^{1-\delta}}$ maximizes the quantity
$$\sharp\Big\{q\in q[\mu_1,\mu_2];\,\, T_{\theta_1,\nu_1}\cap R^\delta q\neq\emptyset,\,\,q\cap Q\neq\emptyset\Big\}.$$
By the definition, for any fixed  triple $(\lambda_1,\mu_1,\mu_2)$, there is $O(1)$ cubes $Q\in\mathcal{Q}_{R^{1-\delta}}$ satisfying $T_{\theta_1,\nu_1}\sim_{\lambda_1,\mu_1,\mu_2} Q$. And we call 
\begin{equation}
	T_{\theta_1,\nu_1}\sim Q\quad\text{if and only if}\quad\exists (\lambda_1,\mu_1,\mu_2)\quad\text{such that }T_{\theta_1,\nu_1}\sim_{\lambda_1,\mu_1,\mu_2} Q.
\end{equation} The relation $T_{\theta_2,\nu_2}\sim Q$ can be defined in the same way (see Figure 6). By the definition, it is easy to see that
\begin{equation}\label{e3.6}
	\sharp\big\{Q\in \mathcal{Q}_{R^{1-\delta}};\,\,T_{\theta_i,\nu_i}\sim Q\big\}\lesssim_{\epsilon}R^{\epsilon}\quad \text{for }i=1,2.
\end{equation}
The relation $T_{\theta_i,\nu_i}\sim Q$ describes the fact that the wave packet mainly concentrated in the cube $Q$. Otherwise, we use the notation 
$T_{\theta_i,\nu_i}\nsim Q$.  For this part, Tao proved the following proposition.
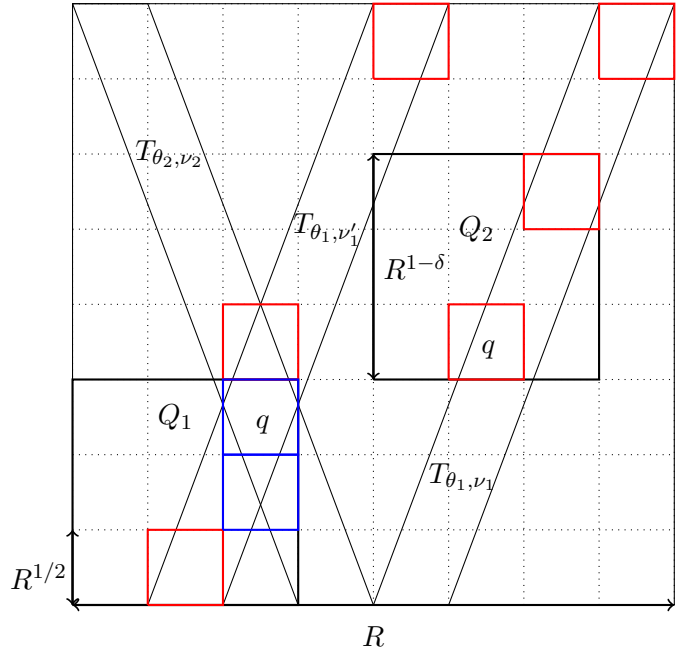
\begin{figure}
	
	\begin{tikzpicture}
		
		\draw  	(0,0) - - (0,8) - - (8,8) - - (8,0) - - (0,0);
		\draw[dotted,black] (0,0) grid (8,8);
		\draw[< - >,thick] (0,0) -- (8,0);
		\draw[fill] (4,-0.15) node[below] {$R$};
		\draw[< - >,thick] (4,3) -- (4,6);
		\draw[fill] (4,4.5) node[right] {$R^{1-\delta}$};
		\draw[< - >,thick] (0,0) -- (0,1);
		\draw[fill] (-0.45,0.7) node[below] {$R^{1/2}$};
		\draw[thick]  	(4,3) - - (4,6) - - (7,6) - - (7,3) - - (4,3);
		\draw[fill] (1,2.5) node[right] {$Q_1$};
		\draw[fill] (5,5) node[right] {$Q_2$};
		\draw[thick]	(0,0) - - (0,3) - - (3,3) - - (3,0) - - (0,0);
		\draw	(4,0) - - (7,8) - - (8,8) - - (5,0) - - (4,0);
		\draw[fill] (4.6,1.7) node[right] {$T_{\theta_1,\nu_1}$};
		\draw	(1,0) - - (4,8) - - (5,8) - - (2,0) - - (1,0);
		\draw[fill] (2.8,5) node[right] {$T_{\theta_1,\nu_1^{\prime}}$};
		\draw	(0,8) - - (1,8) - - (4,0) - - (3,0) - - (0,8);
		\draw[fill] (0.7,6) node[right] {$T_{\theta_2,\nu_2}$};
		\draw[thick,red]	(5,3) - - (5,4) - - (6,4) - - (6,3) - - (5,3);
		\draw[fill] (5.3,3.4) node[right] {$q$};
		\draw[fill] (2.3,2.4) node[right] {$q$};
		\draw[thick,red]	(6,5) - - (6,6) - - (7,6) - - (7,5) - - (6,5);
		\draw[thick,red]	(7,7) - - (7,8) - - (8,8) - - (8,7) - - (7,7);
		\draw[thick,red]	(1,0) - - (1,1) - - (2,1) - - (2,0) - - (1,0);
		\draw[thick,red]	(2,3) - - (2,4) - - (3,4) - - (3,3) - - (2,3);
		\draw[thick,red]	(4,7) - - (4,8) - - (5,8) - - (5,7) - - (4,7);
		\draw[thick,blue]	(2,1) - - (2,2) - - (3,2) - - (3,1) - - (2,1);
		\draw[thick,blue]	(2,2) - - (2,3) - - (3,3) - - (3,2) - - (2,2);
		
	\end{tikzpicture}
	\caption{$T_{\theta_1,\nu_1}\sim Q_2,\ T_{\theta_1,\nu_1^{\prime}}\sim Q_1,\ T_{\theta_2,\nu_2}\sim Q_1$}

\end{figure}
\begin{proposition}\cite{t03}\label{l2.2}
	For any given $T_{\theta_i,\nu_i}$, we have
	\begin{equation}\label{e2.7}
		\Big\|\Big|\Big(\sum_{T_{\theta_1,\nu_1}\nsim Q}e^{it\Delta}f_{\theta_1,\nu_1}\Big)\Big(\sum_{T_{\theta_2,\nu_2}\sim Q}e^{it\Delta}f_{\theta_2,\nu_2}\Big)\Big|^{\frac12}\Big\|_{L_{x,t}^4(Q)}\lesssim_\epsilon R^{\epsilon+c\delta-\frac{n-1}{8}}\|f_1\|^{\frac12}_{2}\|f_2\|^{\frac12}_{2},
	\end{equation}
	and
	\begin{equation}
		\Big\|\Big|\Big(\sum_{T_{\theta_1,\nu_1}\nsim Q}e^{it\Delta}f_{\theta_1,\nu_1}\Big)\Big(\sum_{T_{\theta_2,\nu_2}\nsim Q}e^{it\Delta}f_{\theta_2,\nu_2}\Big)\Big|^{\frac12}\Big\|_{L_{x,t}^4(Q)}\lesssim_\epsilon R^{\epsilon+c\delta-\frac{n-1}{8}}\|f_1\|^{\frac12}_{2}\|f_2\|^{\frac12}_{2}
	\end{equation}
	for some constant $c$ independent of $\delta,\epsilon$. 
\end{proposition}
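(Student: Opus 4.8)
The plan is to reproduce Tao's induction-on-scales proof of the bilinear restriction estimate \cite{t03}, specialized to the Schr\"odinger paraboloid. First I would fix the wave packet decomposition at scale $R$ as in \eqref{semigroup decompose}, so that each $|e^{it\Delta}f_{\theta_i,\nu_i}|$ equals $R^{-n/4}|c_{\theta_i,\nu_i}|\chi_{T_{\theta_i,\nu_i}}$ up to $\text{RapDec}(R)$ tails, with $|c_{\theta_i,\nu_i}|\approx\|f_{\theta_i,\nu_i}\|_{L^2}$. Since $\dist(\supp\hat f_1,\supp\hat f_2)\geq 1/2$, the directions $(4\pi c(\theta_1),1)$ and $(4\pi c(\theta_2),1)$ of any two tubes $T_{\theta_1,\nu_1},T_{\theta_2,\nu_2}$ meet at an angle $\gtrsim1$, so the tubes are quantitatively transverse; consequently, wherever such a pair crosses, the product $e^{it\Delta}f_{\theta_1,\nu_1}\cdot e^{it\Delta}f_{\theta_2,\nu_2}$ is essentially a single modulated bump adapted to an $R^{1/2}$-cube, with spatial frequency determined by $c(\theta_1)+c(\theta_2)$.

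The first step is an $L^4$-biorthogonality estimate in the spirit of C\'ordoba and Tao. Decomposing $Q$ into $R^{1/2}$-cubes $q$, on each $q$ only the tubes meeting $R^\delta q$ contribute (the others up to rapidly decaying tails), and transversality makes the space-time Fourier supports of the various products $(e^{it\Delta}f_{\theta_1,\nu_1})(e^{it\Delta}f_{\theta_2,\nu_2})$ boundedly overlapping. This reduces the $L^4(Q)$-norm of the bilinear sum to a weighted count of tube pairs sharing an $R^{1/2}$-cube $q\subset Q$, schematically
\[
\Big\|\Big(\sum_{T_{\theta_1,\nu_1}}e^{it\Delta}f_{\theta_1,\nu_1}\Big)\Big(\sum_{T_{\theta_2,\nu_2}}e^{it\Delta}f_{\theta_2,\nu_2}\Big)\Big\|_{L^4(Q)}^4\lesssim w_R\sum_{q\subset Q}\Big(\sum_{T_{\theta_1,\nu_1}\cap R^\delta q\neq\emptyset}|c_{\theta_1,\nu_1}|^2\Big)\Big(\sum_{T_{\theta_2,\nu_2}\cap R^\delta q\neq\emptyset}|c_{\theta_2,\nu_2}|^2\Big),
\]
where $w_R$ is the explicit normalizing power of $R$ built from the wave-packet height $R^{-n/4}$, the $R^{1/2}$-cube volume $R^{(n+1)/2}$, and the $O(1)$ transversality constant, and where, for the first inequality of the proposition, the inner sums run over $T_{\theta_1,\nu_1}\nsim Q$ and $T_{\theta_2,\nu_2}\sim Q$ respectively.

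The second step is to bound this double sum using the relation $\sim$ together with the dyadic parameters $\mu_1,\mu_2,\lambda_1,\lambda_2$. I would first pigeonhole so that every relevant cube $q$ lies in a single class $q[\mu_1,\mu_2]$ and every contributing tube lies in a single class $\mathbb T_i[\lambda_i,\mu_1,\mu_2]$, at a cost of only $O(\log R)^4\lesssim R^{O(\delta)}$. For a fixed parameter block, the definition of $\sim$ says that $Q$ maximizes, over the $\sim R^{n\delta}$ cubes of $\mathcal Q_{R^{1-\delta}}$, the number of cubes $q\in q[\mu_1,\mu_2]$ a tube meets inside one $R^{1-\delta}$-cube; hence $T_{\theta_1,\nu_1}\nsim Q$ forces that count inside $Q$ to be suboptimal, and combining this with the per-cube bound $\#\mathbb T_i(q)\leq 2\mu_i$ and the per-tube bound $\lambda(T_{\theta_i,\nu_i},\mu_1,\mu_2)\leq 2\lambda_i$ lets one factor $\sum_{q\subset Q}(\cdots)(\cdots)$ into a product of total masses $\sum_{T_{\theta_i,\nu_i}}|c_{\theta_i,\nu_i}|^2\leq\|f_i\|_{L^2}^2$ times the power of $R$ that records the transversality gain. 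Optimizing over parameter blocks and using that the transversality constant is $O(1)$ yields the exponent $-(n-1)/8$, with $R^{\epsilon+c\delta}$ absorbing all pigeonholing losses. The second inequality, in which both families are $\nsim Q$, is handled the same way and is if anything easier, both factors now being ``spread out.''

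I expect the main obstacle to be exactly this combinatorial bookkeeping: quantifying how $T_{\theta_i,\nu_i}\nsim Q$ makes the per-cube tube counts inside $Q$ genuinely suboptimal, and balancing the four dyadic parameters so that the transversality gain is extracted without overcounting tubes that cluster along lower-dimensional sets inside $Q$. One must also check that the Galilean tilt of the tubes (direction $(4\pi c(\theta_i),1)$ rather than a fixed axis) does not interfere with the counting, which it does not, since transversality is measured through the angle between these directions and is invariant under the common shear relating them.
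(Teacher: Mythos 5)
The paper itself does not prove this proposition: it is cited verbatim from Tao's GAFA paper \cite{t03}, so there is no ``paper's own argument'' for comparison --- what I can assess is whether your reconstruction is a faithful outline of Tao's proof.

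Your first step --- wave-packet decomposition, transversality from $\dist(\supp\hat f_1,\supp\hat f_2)\gtrsim 1$, and the C\'ordoba-type biorthogonality reduction on $R^{1/2}$-cubes --- is essentially correct, modulo a notational slip: the quantity on the left of your schematic display should be $\||F_1F_2|^{1/2}\|_{L^4(Q)}^4=\|F_1F_2\|_{L^2(Q)}^2$, not $\|F_1F_2\|_{L^4(Q)}^4$. With this correction, Plancherel plus the observation that $(\xi_1,\xi_2)\mapsto(\xi_1+\xi_2,|\xi_1|^2+|\xi_2|^2)$ is a local diffeomorphism on the transverse pair does give
\[
\|F_1 F_2\|_{L^2(Q)}^2
\lesssim R^{O(\delta)}R^{\frac{1-n}{2}}\sum_{q\subset Q}\Big(\sum_{T_1\cap R^\delta q\neq\emptyset}|c_{T_1}|^2\Big)\Big(\sum_{T_2\cap R^\delta q\neq\emptyset}|c_{T_2}|^2\Big),
\]
i.e.\ your $w_R\sim R^{(1-n)/2}$. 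The trivial bound on the double sum (each tube meets $\lesssim R^{1/2-\delta}$ cubes $q\subset Q$, each $q$ carries $\leq\|f_2\|_{L^2}^2$ mass from the second family) is $R^{1/2}\|f_1\|_{L^2}^2\|f_2\|_{L^2}^2$, so to land on the claimed $R^{-(n-1)/2+O(\epsilon)}\|f_1\|_{L^2}^2\|f_2\|_{L^2}^2$ one must show that the restriction to $T_1\nsim Q$ (resp.\ $T_2\sim Q$ or $T_2\nsim Q$) buys a full extra factor of $R^{-1/2+O(\epsilon)}$ in the combinatorial sum.

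This is exactly where the gap is. Your ``second step'' is not a proof: ``suboptimal'' is not a quantitative statement, and the claim that the per-cube bound $\#\mathbb T_i(q)\sim\mu_i$, the per-tube bound $\lambda(T_i,\mu_1,\mu_2)\sim\lambda_i$, and the maximizer condition in the definition of $\sim$ together ``let one factor $\sum_q(\cdots)(\cdots)$ into a product of total masses times the power of $R$ that records the transversality gain'' does not follow from what you have written. Note that $T_1\nsim Q$ only says $Q$ is not among the $O(1)$ maximizing $R^{1-\delta}$-cubes; it does not by itself force the count of cubes $q\in q[\mu_1,\mu_2]\cap Q$ met by $T_1$ to be much smaller than the maximum, since the mass of $T_1$ over the $\sim R^{(n+1)\delta}$ many cubes $Q$ could still be highly concentrated. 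Tao's actual argument supplements the $\sim$-bookkeeping with a genuine Kakeya-type geometric lemma (his Lemmas~6.4--6.5 in \cite{t03}): it exploits the fact that the tubes through a fixed $R^{1/2}$-cube form a ``bush'' of $\lesssim R^{(n-1)/2}$ essentially distinct directions, so that when $T_1\nsim Q$ one can run a two-ends/hairbrush argument pairing the piece of $T_1$ inside $Q$ against the (necessarily large) piece outside $Q$, and it is this incidence estimate, not mere double counting with $\mu_i,\lambda_i$, that produces the $R^{-(n-1)/8}$ exponent. Without this input the proposal does not produce the stated exponent, and indeed cannot, since the bookkeeping you describe is purely formal and would apply equally to a one-dimensional model where no such gain exists.

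One smaller framing issue: you describe this as ``Tao's induction-on-scales proof.'' The induction on scales in \cite{t03} is the outer argument that uses this proposition as a lemma; the proposition itself is established by a direct (non-inductive) combinatorial argument of the bush/hairbrush type described above.
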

By the Galilean transform $\mathcal{G}$, we have $\hat{\tilde{f_i}}(\xi)=\hat{f_i}(\xi-Ne_1)$  supported on subsets of $\mathbb{B}^n(Ne_1,1)$ with $\text{dist}(\text{supp}\hat{\tilde{f_1}},\text{supp}\hat{\tilde{f_2}})\geq \frac{1}{2}$. The cube $Q_R$ becomes the parallelepiped $P_R$, and the cube $Q\in\mathcal{Q}_{R^{1-\delta}}$ turns out to be the $R^{1-\delta}$-parallelepiped $\tilde{P}\in \mathcal{P}$ . The image of the tube $T_{\theta_i,\nu_i}$ will be denote as $\tilde{T}_{\theta_i,\nu_i}$. We define $\tilde{T}_{\theta_i,\nu_i}\sim \tilde{P}$ if and only if $T_{\theta_i,\nu_i}\sim Q$. By Proposition \ref{l2.2}, we have 
\begin{proposition}\label{l2.3}
	For any given $\tilde{T}_{\theta_i,\nu_i}$, 
	\begin{equation}\label{e2.15}
		\sharp\{\tilde{P}\in \mathcal{P}:\tilde{T}_{\theta_i,\nu_i}\sim \tilde{P}\}\lesssim_{\epsilon}R^{\epsilon},
	\end{equation}
	
	\begin{equation}\label{e2.14}
		\Big\|\Big|\Big(\sum_{\tilde{T}_{\theta_1,\nu_1}\nsim \tilde{P}}e^{i\tilde{t}\Delta}\tilde{f}_{\theta_1,\nu_1}\Big)\Big(\sum_{\tilde{T}_{\theta_2,\nu_2}\sim \tilde{P}}e^{i\tilde{t}\Delta}\tilde{f}_{\theta_2,\nu_2}\Big)\Big|^{\frac12}\Big\|_{L_{x,t}^4(\tilde{P})}\lesssim_\epsilon R^{\epsilon+c\delta-\frac{n-1}{8}}\|\tilde{f}_1\|^{\frac12}_{2}|\|\tilde{f}_2\|^{\frac12}_{2},
	\end{equation}
	and
	\begin{equation}
		\Big\|\Big|\Big(\sum_{\tilde{T}_{\theta_1,\nu_1}\nsim \tilde{P}}e^{i\tilde{t}\Delta}\tilde{f}_{\theta_1,\nu_1}\Big)\Big(\sum_{\tilde{T}_{\theta_2,\nu_2}\nsim \tilde{P}}e^{i\tilde{t}\Delta}\tilde{f}_{\theta_2,\nu_2}\Big)\Big|^{\frac12}\Big\|_{L_{x,t}^4(\tilde{P})}\lesssim_\epsilon R^{\epsilon+c\delta-\frac{n-1}{8}}\|\tilde{f}_1\|^{\frac12}_{2}|\|\tilde{f}_2\|^{\frac12}_{2}
	\end{equation}
	for some constant $c$ independent of $\delta,\epsilon$.
\end{proposition}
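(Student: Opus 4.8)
The plan is to derive Proposition~\ref{l2.3} from Tao's Proposition~\ref{l2.2} by transporting the entire configuration through the Galilean change of variables $\mathcal{G}$. This $\mathcal{G}$ is a linear shear of $\mathbb{R}^{n+1}$ with $|\det D\mathcal{G}|=1$; it is a bijection carrying the axis-parallel family $Q_R$, $Q\in\mathcal{Q}_{R^{1-\delta}}$, $q\in\mathcal{Q}_{R^{1/2}}$, $T_{\theta_i,\nu_i}$ used in the proof of Proposition~\ref{l2.2} onto the tilted family $P_R$, $\tilde P\in\mathcal{P}$, $\tilde q$, $\tilde T_{\theta_i,\nu_i}$. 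On the analytic side, $\widehat{\tilde f_i}(\xi)=\hat f_i(\xi-Ne_1)$ is a translation, so Plancherel gives $\|\tilde f_i\|_{L^2}=\|f_i\|_{L^2}$ and $\operatorname{dist}(\operatorname{supp}\widehat{\tilde f_1},\operatorname{supp}\widehat{\tilde f_2})=\operatorname{dist}(\operatorname{supp}\hat f_1,\operatorname{supp}\hat f_2)$, while a direct computation with the Schr\"odinger phase shows $\bigl|e^{it\Delta}\tilde f_{\theta_i,\nu_i}(x)\bigr|=\bigl|e^{i\tilde t\Delta}f_{\theta_i,\nu_i}(\tilde x)\bigr|$ whenever $(\tilde x,\tilde t)=\mathcal{G}(x,t)$, the unimodular Galilean prefactor dropping out of the modulus. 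In particular the equal-exponent norm $\|\cdot\|_{L^4_{x,t}}$ is preserved exactly, which is precisely why no power of $N$ occurs on the right of \eqref{e2.14}.

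First I would check that the relation $\sim$ and the counting bound \eqref{e2.15} are literally the images under $\mathcal{G}$ of their untilted counterparts. Every ingredient of Tao's definition of $T_{\theta_i,\nu_i}\sim Q$ --- the sets $\mathbb{T}_i(q)$, the classes $q[\mu_1,\mu_2]$, the multiplicities $\lambda(T_{\theta_i,\nu_i},\mu_1,\mu_2)$, the refinements $\mathbb{T}_i[\lambda_i,\mu_1,\mu_2]$, and the maximizing cube defining $\sim_{\lambda_1,\mu_1,\mu_2}$ --- is built only from the incidence predicates ``$T_{\theta_i,\nu_i}\cap R^{\delta}q\neq\varnothing$'' and ``$q\cap Q\neq\varnothing$''. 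Since $\mathcal{G}$ is a bijection which, being linear, commutes with dilation about the centre of a cube, these predicates are preserved, hence each of the above objects goes over to its tilted analogue. Because the paper \emph{defines} $\tilde T_{\theta_i,\nu_i}\sim\tilde P$ to mean $T_{\theta_i,\nu_i}\sim Q$, the bound \eqref{e2.15} is then nothing but \eqref{e3.6} read through $\mathcal{G}$.

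Next I would obtain the two bilinear inequalities by the change of variables $(\tilde x,\tilde t)=\mathcal{G}(x,t)$ in the $L^4(\tilde P)$ integrals: as $|\det D\mathcal{G}|=1$ the $L^4$ norm is unchanged, and under this substitution $\sum_{\tilde T_{\theta_1,\nu_1}\nsim\tilde P}e^{i\tilde t\Delta}\tilde f_{\theta_1,\nu_1}$ becomes, up to a unimodular factor that drops out of the modulus, $\sum_{T_{\theta_1,\nu_1}\nsim Q}e^{it\Delta}f_{\theta_1,\nu_1}$, and similarly for the ``$\sim Q$'' and ``$\nsim Q$'' sums over the $\theta_2$-packets; the index sets match precisely because $\sim$ is transported by the previous step. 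Together with $\|\tilde f_i\|_{L^2}=\|f_i\|_{L^2}$, the bound \eqref{e2.14} and its ``$\nsim/\nsim$'' analogue follow at once from the two displayed inequalities in Proposition~\ref{l2.2}.

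The only genuine obstacle is bookkeeping. Since $\mathcal{G}$ is a shear of slope $\sim N$ it does \emph{not} send a round $R^{1/2}$-cube $q$ to a round cube --- the image $\mathcal{G}(q)$ is an $R^{1/2}$-parallelepiped elongated in the direction $(4\pi Ne_1,1)$, and the same is true for $Q$ and $Q_R$. One must therefore verify that Tao's construction of $\sim$, although phrased with Euclidean cubes, uses only comparability of volumes together with the bounded-overlap of the families $\{q\}$, $\{Q\}$, $\{Q_R\}$, both of which are affine invariants preserved by $\mathcal{G}$; equivalently, one reruns the definitions and the proof of Proposition~\ref{l2.2} verbatim with $\{\tilde q\}$, $\{\tilde P\}$, $\{P_R\}$, $\{\tilde T_{\theta_i,\nu_i}\}$ in place of $\{q\}$, $\{Q\}$, $\{Q_R\}$, $\{T_{\theta_i,\nu_i}\}$, nothing in that argument having used the roundness of the cubes. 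With this observation in hand, Proposition~\ref{l2.3} is immediate.
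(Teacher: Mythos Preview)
Your proposal is correct and follows exactly the paper's approach: the paper states Proposition~\ref{l2.3} immediately after defining $\tilde T_{\theta_i,\nu_i}\sim\tilde P$ via transport under $\mathcal G$ and simply writes ``By Proposition~\ref{l2.2}, we have'', giving no further argument. Your write-up is a faithful (and considerably more detailed) elaboration of that one-line reduction, correctly noting that the incidence-based definition of $\sim$, the unit-Jacobian change of variables in $L^4_{x,t}$, and the $L^2$-isometry $\|\tilde f_i\|_2=\|f_i\|_2$ are all that is needed.
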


\begin{lemma}\label{l2.4}
	Let $p=3+\frac 17$, $r=5+\frac 12$ and $N\geq 1$. Suppose that $\hat{f_1},\ \hat{f_2}$ are supported on subsets of $\mathbb{B}^n(N e_1,1)$ with $\text{dist}(\text{supp}\hat{f_1},\text{supp}\hat{f_2})\geq \frac{1}{2}$. Then, for any $\epsilon>0$,
	\begin{equation}\label{e4.8}
		\||e^{it\Delta}f_1e^{it\Delta}f_2|^{\frac12}\|_{L_x^{p}L_t^{r}(P_R)}\leq R^{\epsilon}N^{\frac1p-\frac1r} \|f_1\|^{\frac12}_{L^2}\|f_2\|^{\frac12}_{L^2}.
	\end{equation}
	
\end{lemma}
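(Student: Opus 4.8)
The plan is to prove \eqref{e4.8} by induction on the scale $R$, following the bilinear-to-linear scheme of Tao. Normalising $\|f_1\|_{L^2}=\|f_2\|_{L^2}=1$, the base case $R\lesssim1$ is immediate from the trivial bound $|e^{it\Delta}f_i|\lesssim\|\hat f_i\|_{L^1}\lesssim\|f_i\|_{L^2}$ together with $N^{1/p-1/r}\ge1$. For large $R$ I work directly in the tilted frame in which Lemma~\ref{l2.4} and Propositions~\ref{l2.2}--\ref{l2.3} are stated, cover $P_R$ by finitely overlapping $R^{1-\delta}$-parallelepipeds $\tilde P\in\mathcal P$ (so $\#\mathcal P\lesssim R^{O(\delta)}$), and on each $\tilde P$ perform Tao's $\sim/\nsim$ splitting of the wave packets relative to $\tilde P$:
\[
e^{it\Delta}f_i\approx g_i^{\tilde P}+b_i^{\tilde P},\qquad g_i^{\tilde P}=\sum_{\tilde T_{\theta_i,\nu_i}\sim\tilde P}e^{it\Delta}f_{\theta_i,\nu_i},\quad b_i^{\tilde P}=\sum_{\tilde T_{\theta_i,\nu_i}\nsim\tilde P,\ \tilde T\cap R^\delta\tilde P\ne\emptyset}e^{it\Delta}f_{\theta_i,\nu_i},
\]
up to a $\mathrm{RapDec}(R)$ error from tubes missing $\tilde P$, so that pointwise on $\tilde P$ one has $|e^{it\Delta}f_1\,e^{it\Delta}f_2|\lesssim|g_1^{\tilde P}g_2^{\tilde P}|+|g_1^{\tilde P}b_2^{\tilde P}|+|b_1^{\tilde P}g_2^{\tilde P}|+|b_1^{\tilde P}b_2^{\tilde P}|$.

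For the diagonal term $|g_1^{\tilde P}g_2^{\tilde P}|$ I recentre $\tilde P$ to the origin; since translation in space-time is a modulation on the Fourier side, the recentred data $\tilde g_i^{\tilde P}$ still have Fourier supports in $\mathbb B^2(Ne_1,1)$ separated by $\ge\tfrac12$, and $|g_1^{\tilde P}g_2^{\tilde P}|$ on $\tilde P$ becomes $|e^{i\tilde t\Delta}\tilde g_1^{\tilde P}\,e^{i\tilde t\Delta}\tilde g_2^{\tilde P}|$ on $P_{R^{1-\delta}}$. The inductive hypothesis at scale $R^{1-\delta}$ then gives $\|\,|g_1^{\tilde P}g_2^{\tilde P}|^{1/2}\|_{L^p_xL^r_t(\tilde P)}\lesssim (R^{1-\delta})^{\epsilon}N^{1/p-1/r}\|\tilde g_1^{\tilde P}\|_{L^2}^{1/2}\|\tilde g_2^{\tilde P}\|_{L^2}^{1/2}$. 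Because $p\le r$ one has $\|F\|_{L^p_xL^r_t(P_R)}^p\lesssim\sum_{\tilde P}\|F\|_{L^p_xL^r_t(\tilde P)}^p$, so after Cauchy--Schwarz in $\tilde P$ and $\|\tilde g_i^{\tilde P}\|_{L^2}^{p}\lesssim\|\tilde g_i^{\tilde P}\|_{L^2}^2$ ($p\ge2$, $\|\tilde g_i^{\tilde P}\|_{L^2}\lesssim1$) the diagonal contribution is controlled by $(R^{1-\delta})^{\epsilon p}N^{(1/p-1/r)p}$ times $\sum_{\tilde P}\|\tilde g_i^{\tilde P}\|_{L^2}^2$; by near-orthogonality of wave packets and \eqref{e2.15} this last sum is $\lesssim_{\epsilon'}R^{\epsilon'}$ for any $\epsilon'>0$. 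Choosing $\epsilon'$ small compared with $\delta\epsilon p$, the gain $R^{-\delta\epsilon p}R^{\epsilon'}\to0$ lets the diagonal term be absorbed into the inductive constant.

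For the three off-diagonal terms each carries a factor supported on tubes $\nsim\tilde P$, so Proposition~\ref{l2.3} (together with the $\tilde T_1\leftrightarrow\tilde T_2$ symmetry) applies and yields $\|\,|\,\cdot\,|^{1/2}\|_{L^4_{x,t}(\tilde P)}\lesssim_\epsilon R^{\epsilon+c\delta-1/8}$, the genuine power gain $R^{-(n-1)/8}=R^{-1/8}$ coming from the transversality/concentration structure (this is where Tao's sharp bilinear restriction for the paraboloid enters). To pass from this $L^4_{x,t}$ bound to the target space $L^{p}_xL^{r}_t$ at $(p,r)=(3+\tfrac17,5+\tfrac12)$ I interpolate it with the endpoint estimate of Theorem~\ref{t2.1}, used bilinearly via $|ab|^{1/2}\le\tfrac12(|a|+|b|)$ and the rescaling $f_1\mapsto\lambda f_1$, $f_2\mapsto\lambda^{-1}f_2$, which supplies an $L^{3}_xL^{\infty}_t$ bound with loss $N^{1/3}R^{\epsilon}$, plus, where needed, the trivial $L^2$-bounds. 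The pair $(3+\tfrac17,5+\tfrac12)$ is exactly the one for which (i) the interpolated $N$-power equals the required $N^{1/p-1/r}=N^{3/22}$ and (ii) the interpolation/Hölder cost incurred on each $R^{1-\delta}$-piece is still dominated by the factor $R^{-1/8}$; note the relation $\tfrac1p+\tfrac1r=\tfrac12$, which makes the $L^4_{x,t}\to L^p_xL^r_t$ passage essentially scaling-neutral. Summing over $\tilde P$ (again via $p\le r$), the off-diagonal contribution is $\lesssim_\epsilon R^{\epsilon}N^{1/p-1/r}$ with an absolute constant, completing the induction.

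The main obstacle will be the off-diagonal bookkeeping: one must check that at the precise pair $(3+\tfrac17,5+\tfrac12)$ the transversality gain $R^{-(n-1)/8}$ genuinely survives all of the losses in the $\sim/\nsim$ decomposition — the locally constant reductions, the $R^{O(\delta)}$ and logarithmic factors from overlap and pigeonholing, and the interpolation with Theorem~\ref{t2.1} — and that the measure-preserving Galilean shear aligning $P_R$ with $Q_R$ is compatible with the relation $\sim$; these are the points where Propositions~\ref{l2.2}--\ref{l2.3} must be glued carefully to the scale induction. A secondary technical issue is the anisotropic $L^p_xL^r_t$ summation over the $\tilde P$ when $p\ne r$, which is what forces the ordering $p\le r$ to be exploited throughout.
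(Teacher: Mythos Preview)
Your overall architecture matches the paper's: induction on $R$, Tao's $\sim/\nsim$ splitting on $R^{1-\delta}$-parallelepipeds, the diagonal term handled by the inductive hypothesis plus the almost-orthogonality \eqref{e2.15}, and the three off-diagonal terms by interpolating the $L^4_{x,t}$ gain from Proposition~\ref{l2.3} against a maximal estimate coming from Theorem~\ref{t2.1}. The paper also phrases the scale induction as a bootstrap on the best constant $C_R$, but that is a cosmetic difference.

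The genuine gap is in your off-diagonal interpolation. You propose to interpolate the $L^4_{x,t}$ bound (with gain $R^{-1/8}$) against the bilinear $L^3_xL^\infty_t$ bound from Theorem~\ref{t2.1} (loss $N^{1/3}R^\epsilon$), ``plus, where needed, the trivial $L^2$-bounds''. But the segment between $(1/4,1/4)$ and $(1/3,0)$ in the $(1/p,1/r)$ plane does \emph{not} pass through $(7/22,2/11)$: at $1/r=2/11$ it hits $1/p=3/11$, i.e.\ $p=11/3$, not $p=22/7$. Adding $(\infty,\infty)$ with the trivial $L^\infty$ bound forces a negative weight, and adding the genuinely trivial $L^2_xL^\infty_t$ bound $\lesssim(NR^2)^{1/2}$ leaves a positive $R$-power that kills the induction. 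The relation $1/p+1/r=1/2$ you invoke is true but does not by itself make the $L^4_{x,t}\to L^p_xL^r_t$ passage lossless.

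What the paper actually does is first upgrade Theorem~\ref{t2.1} by H\"older over the spatial projection of $P_R$ (which has measure $\sim NR^2$) to get
\[
\|e^{it\Delta}f\|_{L^2_xL^\infty_t(P_R)}\le (NR^2)^{1/2-1/3}\|e^{it\Delta}f\|_{L^3_xL^\infty_t(P_R)}\lesssim_\epsilon N^{1/2}R^{1/3+\epsilon}\|f\|_{L^2},
\]
and then, via Cauchy--Schwarz, a bilinear $L^2_xL^\infty_t$ bound with loss $N^{1/2}R^{1/3}$ on each $\tilde P$. Interpolating \emph{that} against the $L^4_{x,t}$ estimate with weight $\theta=3/11$ on the $(2,\infty)$ endpoint lands exactly at $(p,r)=(22/7,11/2)$, gives the correct $N$-power $N^{3/22}=N^{1/p-1/r}$, and makes the $R$-powers cancel: $(8/11)(-1/8)+(3/11)(1/3)=0$. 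This is the step you need to replace your $(3,\infty)$ endpoint with; once that is done, your argument closes.
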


\begin{proof}
	By Theorem \ref{t2.1} and H$\ddot{\text{o}}$lder's inequality, we have 
	\begin{equation}\label{e4.2}
		||e^{it\Delta}f||_{L_x^2L_t^{\infty}(P_R)}\leq (NR^2)^{\frac12-\frac13}||e^{it\Delta}f||_{L_x^3L_t^{\infty}(P_R)}\lesssim N^{\frac12}R^{\frac13+\epsilon}||f||_{L^2}.
	\end{equation}
	
	Let $C_R$ be the minimum number such that
	$$\||e^{it\Delta}f_1e^{it\Delta}f_2|^{\frac12}\|_{L_x^{p}L_t^{r}(P_R)}\leq C_RN^{\frac1p-\frac1r} \|f_1\|^{\frac12}_{L^2}\|f_2\|^{\frac12}_{L^2}.
	$$
	Assume that 
	\begin{equation}\label{e3.11}
		C_R\lesssim R^{\alpha}
	\end{equation}
	for some $\alpha>0$ and all $R\geq 1$. We claim that, for any $0<\delta\ll\epsilon\ll1$,  
	\begin{equation}\label{e3.12x}
		C_R\lesssim_{\delta,\epsilon}R^{\max\{(1-\delta)\alpha,c\delta\}+\epsilon},
	\end{equation}
	 where the constant $c$ is independent of $\delta,\epsilon,\alpha,R$. By a bootstrapping argument, we have
	$$C_R\lesssim_\epsilon R^\epsilon.$$ 
	
	We now give the proof of the claim. By H\"older's inequality and the bilinear Fourier restriction estimate, for $r>10/3$ 
	\begin{align*}
		\||e^{it\Delta}f_1e^{it\Delta}f_2|^{\frac12}\|_{L_x^{p}L_t^{r}(P_R)}
		& \leq (R^2N)^{\frac1p-\frac1r}\||e^{it\Delta}f_1e^{it\Delta}f_2|^{\frac12}\|_{L_x^{r}L_t^{r}(\mathbb{R}^{2+1})}\\
		& \lesssim (R^2N)^{\frac1p-\frac1r}\|f_1\|^{\frac12}_{L^2}\|f_2\|^{\frac12}_{L^2}. 	
	\end{align*}
	Hence \eqref{e3.11} holds for $\alpha= 2(\frac1p-\frac1r)$.
	
	$\||e^{it\Delta}f_1 e^{it\Delta}f_2|^{\frac12}\|_{L_x^pL_t^r(P_R)}$ can be decomposed into the  local part:
	\begin{equation}
		\sum_{\tilde{P}\in \mathcal{P}}\Big\|\Big|\Big(\sum_{T_{\theta_1,\nu_1}\sim \tilde{P}}e^{it\Delta}f_{\theta_1,\nu_1}\Big)\Big(\sum_{T_{\theta_2,\nu_2}\sim \tilde{P}}e^{it\Delta}f_{\theta_2,\nu_2}\Big)\Big|^{\frac12}\Big\|_{L_x^pL_t^r(\tilde{P})},
	\end{equation}
	and the global parts:
	\begin{align}
		&\sum_{\tilde{P}\in \mathcal{P}}\Big\|\Big|\Big(\sum_{T_{\theta_1,\nu_1}\nsim \tilde{P}}e^{it\Delta}f_{\theta_1,\nu_1}\Big)\Big(\sum_{T_{\theta_2,\nu_2}\sim \tilde{P}}e^{it\Delta}f_{\theta_2,\nu_2}\Big)\Big|^{\frac12}\Big\|_{L_x^pL_t^r(\tilde{P})},\label{eq:3.22x}\\
		&\sum_{\tilde{P}\in \mathcal{P}}\Big\|\Big|\Big(\sum_{T_{\theta_1,\nu_1}\sim \tilde{P}}e^{it\Delta}f_{\theta_1,\nu_1}\Big)\Big(\sum_{T_{\theta_2,\nu_2}\nsim \tilde{P}}e^{it\Delta}f_{\theta_2,\nu_2}\Big)\Big|^{\frac12}\Big\|_{L_x^pL_t^r(\tilde{P})},\label{eq:3.23x}\\
		&\sum_{\tilde{P}\in \mathcal{P}}\Big\|\Big|\Big(\sum_{T_{\theta_1,\nu_1}\nsim \tilde{P}}e^{it\Delta}f_{\theta_1,\nu_1}\Big)\Big(\sum_{T_{\theta_2,\nu_2}\nsim \tilde{P}}e^{it\Delta}f_{\theta_2,\nu_2}\Big)\Big|^{\frac12}\Big\|_{L_x^pL_t^r(\tilde{P})}.\label{eq:3.24x}
	\end{align}
	By Cauchy-Schwarz, \eqref{e2.15}, and \eqref{e3.11}, the local part can be bounded by
	\begin{align}
		&\sum_{\tilde{P}\in \mathcal{P}}\Big\|\Big|\Big(\sum_{T_{\theta_1,\nu_1}\sim \tilde{P}}e^{it\Delta}f_{\theta_1,\nu_1}\Big)\Big(\sum_{T_{\theta_2,\nu_2}\sim \tilde{P}}e^{it\Delta}f_{\theta_2,\nu_2}\Big)\Big|^{\frac12}\Big\|_{L_x^pL_t^r(\tilde{P})}\nonumber\\
		&\ \ \lesssim N^{\frac1p-\frac1r}\sum_{\tilde{P}\in \mathcal{P}} R^{(1-\delta)\alpha} \Big\|\sum_{T_{\theta_1,\nu_1}\sim \tilde{P}} f_{\theta_1,\nu_1} \Big\|^{\frac12}_{L^2} \Big\|\sum_{T_{\theta_2,\nu_2}\sim \tilde{P}} f_{\theta_2,\nu_2} \Big\|^{\frac12}_{L^2}\nonumber\\
		&\ \  \lesssim N^{\frac1p-\frac1r} R^{(1-\delta)\alpha} \Big(\sum_{\tilde{P}\in \mathcal{P}}\Big\|\sum_{T_{\theta_1,\nu_1}\sim \tilde{P}}f_{\theta_1,\nu_1} \Big\|_{L^2}^2\Big)^{\frac{1}{4}}\Big(\sum_{\tilde{P}\in \mathcal{P}}\Big\|\sum_{T_{\theta_2,\nu_2}\sim \tilde{P}}f_{\theta_2,\nu_2} \Big\|_{L^2}^2\Big)^{\frac{1}{4}}\nonumber \\
		&\ \  \lesssim N^{\frac1p-\frac1r} R^{(1-\delta)\alpha} \Big(\sum_{T_{\theta_1,\nu_1}}\sum_{T_{\theta_1,\nu_1}\sim \tilde{P}}\|f_{\theta_1,\nu_1}\|_{L^2}^2\Big)^{\frac{1}{4}}\Big(\sum_{T_{\theta_2,\nu_2}}\sum_{T_{\theta_2,\nu_2}\sim \tilde{P}}\|f_{\theta_2,\nu_2}\|_{L^2}^2\Big)^{\frac{1}{4}}\nonumber \\
		&\ \  \lesssim_\epsilon N^{\frac1p-\frac1r}R^{(1-\delta)\alpha+\epsilon} \|f_1\|^{\frac12}_{L^2}\|f_2\|^{\frac12}_{L^2}.\label{e3.9x}
	\end{align}
	For the global parts, we give the estimate of \eqref{eq:3.22x}. \eqref{eq:3.23x} and \eqref{eq:3.24x} can be bounded in the same way. From Cauchy-Schwarz and \eqref{e4.2}, it follows that
	\begin{align}\label{e2.16}
		&\Big\|\Big|\Big(\sum_{T_{\theta_1,\nu_1}\nsim \tilde{P}}e^{it\Delta}f_{\theta_1,\nu_1}\Big)\Big(\sum_{T_{\theta_2,\nu_2}\sim \tilde{P}}e^{it\Delta}f_{\theta_2,\nu_2}\Big)\Big|^{\frac12}\Big\|_{L_x^2L_t^{\infty}(\tilde{P})}\nonumber\\
		&\ \ \leq  \Big\|\sum_{T_{\theta_1,\nu_1}\nsim \tilde{P}}e^{it\Delta}f_{\theta_1,\nu_1}\Big\|^{\frac12}_{L_x^2L_t^{\infty}(\tilde{P})}\Big\|\sum_{T_{\theta_2,\nu_2}\sim \tilde{P}}e^{it\Delta}f_{\theta_2,\nu_2}\Big\|^{\frac12}_{L_x^2L_t^{\infty}(\tilde{P})}\nonumber\\
		&\ \  \lesssim_\epsilon N^{\frac12}R^{\frac13+\epsilon} \|f_1\|^{\frac12}_{L^2}\|f_2\|^{\frac12}_{L^2}.
	\end{align}  
	Interpolating  \eqref{e2.16} with \eqref{e2.14} $(n=2)$, we obtain 
	$$\Big\|\Big|\Big(\sum_{T_{\theta_1,\nu_1}\nsim \tilde{P}}e^{it\Delta}f_{\theta_1,\nu_1}\Big)\Big(\sum_{T_{\theta_2,\nu_2}\sim \tilde{P}}e^{it\Delta}f_{\theta_2,\nu_2}\Big)\Big|^{\frac12}\Big\|_{L_x^pL_t^r(\tilde{P})}\lesssim_\epsilon N^{\frac1p-\frac1r}R^{c\delta+\epsilon} \|f_1\|^{\frac12}_{L^2}\|f_2\|^{\frac12}_{L^2}$$  for  $p=22/7$ and $r=22/4$.
	Notice that $\sharp\{\tilde{P}:\tilde{P}\in\mathcal{P}\}\lesssim R^{(n+1)\delta}$, 
	$$
	\sum_{\tilde{P}\in \mathcal{P}}\Big\|\Big|\Big(\sum_{T_{\theta_1,\nu_1}\nsim \tilde{P}}e^{it\Delta}f_{\theta_1,\nu_1}\Big)\Big(\sum_{T_{\theta_2,\nu_2}\sim \tilde{P}}e^{it\Delta}f_{\theta_2,\nu_2}\Big)\Big|^{\frac12}\Big\|_{L_x^{p}L_t^{r}(\tilde{P})}\lesssim_{\epsilon} N^{\frac1p-\frac1r}R^{c\delta+\epsilon} \|f_1\|^{\frac12}_{L^2}\|f_2\|^{\frac12}_{L^2}.
	$$
	Combining this estimate and \eqref{e3.9x}, we finally obtain
	$$C_R\lesssim_{\delta,\epsilon}R^{\max\{(1-\delta)\alpha,c\delta\}+\epsilon}.$$	
\end{proof}

From \eqref{e4.8} and Proposition \ref{p2.2}, we have
$$\sup_{\rho>1}\||e^{it\Delta}f_1e^{it\Delta}f_2|^{\frac12}\|_{L_x^{p_0}L_t^{r_0}(\mathbb{R}^2\times[0,\rho])}\lesssim N^{\frac{1}{p_0}-\frac{1}{r_0}} \|f_1\|^{\frac12}_{L^2}\|f_2\|^{\frac12}_{L^2}$$
 for $p_0>3+\frac17$ and $r_0>5+\frac12$. By the following Proposition \ref{p2.1}, we obtain that
$$\|e^{it\Delta}f\|_{L_x^pL_t^r(\mathbb{R}^2\times[0,1])}\lesssim\|f\|_{B_{\alpha,p}^p}$$
holds for $\alpha=n(1-2/p)-2/r$, $p>3+\frac17$ and $r\geq5+\frac12$.

\begin{proposition}\cite{lrs13}\label{p2.1}
	Let $2(n+1)/n\leq p_0\leq r_0\leq 2p_0/(p_0-2)$ and $N\geq 1$. Suppose the Fourier supports of $f_1, f_2$ are in $\mathbb{B}^n(Ne_1, 1)$ with $dist(supp\hat{f_1},supp\hat{f}_2)\geq \frac12$ and
	\begin{equation*}
		\||e^{it\Delta}f_1e^{it\Delta}f_2|^{\frac12}\|_{L_x^{p_0}L_t^{r_0}(\mathbb{R}^n\times[0,\rho])}\lesssim N^{\gamma} \|f_1\|_{L^2}^{\frac12}\|f_2\|_{L^2}^{\frac12}
	\end{equation*}
holds for some $\gamma<2n(1-\frac{2}{p_0})-\frac{4}{r_0}$ and any $\rho>1$. Then, for any $p_0<p<\infty$ and $r_0\leq r<\infty$,
	$$\|e^{it\Delta}f\|_{L_x^pL_t^r(\mathbb{R}^n\times[0,1])}\lesssim\|f\|_{B_{\alpha,p}^p}$$
	holds for $\alpha=n(1-2/p)-2/r$.
\end{proposition}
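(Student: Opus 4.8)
The idea is to reduce, by a Littlewood--Paley decomposition, to a single frequency block, and then on each block to deduce the linear estimate from the bilinear hypothesis via the bilinear-to-linear scheme of the type used by Tao and by Lee, Rogers and Vargas \cite{lrv11}, carried out here in the space-time mixed-norm setting on a unit time interval. Write $f=\sum_{j\ge0}P_jf$ with $\widehat{P_jf}$ supported in $\{|\xi|\sim 2^j\}$, and put $\lambda=2^j$. If one can prove, for every $j$, the frequency-localized bound
$$\|e^{it\Delta}P_jf\|_{L_x^pL_t^r(\mathbb{R}^n\times[0,1])}\lesssim_\epsilon \lambda^{\alpha-\eta}\|P_jf\|_{L^p}$$
with a fixed $\eta=\eta(p,r,p_0,r_0,\gamma)>0$, then the triangle inequality and H\"older's inequality in $j$ give $\|e^{it\Delta}f\|_{L_x^pL_t^r(\mathbb{R}^n\times[0,1])}\lesssim_\epsilon\|f\|_{B_{\alpha,p}^p}$. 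So the whole content is the block estimate, and the surplus power $\lambda^{-\eta}$ will come precisely from the strict inequalities $p>p_0$ and $\gamma<2n(1-\frac{2}{p_0})-\frac{4}{r_0}$ in the hypothesis, which otherwise leave the scheme at its critical configuration.

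To prove the block estimate I would split $\{|\xi|\sim\lambda\}$ into $\sim\lambda^{n-1}$ unit balls $B$, write $P_jf=\sum_B f_B$, and Whitney-decompose $|e^{it\Delta}P_jf|^2=\big|\sum_B e^{it\Delta}f_B\big|^2$ along the diagonal: up to the diagonal term $\sum_B|e^{it\Delta}f_B|^2$, it becomes a sum over scales $\mu=2^{-k}\lambda$, $0\le k\lesssim\log\lambda$, of products $e^{it\Delta}f_{B_1}e^{it\Delta}f_{B_2}$ with $B_1,B_2$ contained in $\mu$-balls that are $\sim\mu$-separated. To one such product I would apply the parabolic rescaling $\xi\mapsto\mu^{-1}\xi$: it sends $f_{B_1},f_{B_2}$ to functions Fourier-supported in one ball $\mathbb{B}^n(\xi',1)$ with $|\xi'|\sim\lambda/\mu=2^k$ and with supports at distance $\ge 1/2$, and it dilates $[0,1]$ to $[0,\mu^2]$ with $\mu^2\ge1$; the hypothesis, being uniform in $\rho>1$, then applies with $N\sim 2^k$ and supplies the factor $(2^k)^\gamma$. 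Carrying the two Jacobian factors back through, summing over Whitney pairs at a fixed scale and then over the $\lesssim\log\lambda$ scales, the series over $k$ is geometric with ratio governed by $\gamma$ against $2n(1-\frac{2}{p_0})-\frac{4}{r_0}$; the strict inequality makes it converge with a power of $\lambda$ to spare. The diagonal term is treated directly: by the locally constant property each $e^{it\Delta}f_B$ is essentially a rigid translate of $f_B$ moving at speed $\sim\lambda$, so $\|e^{it\Delta}f_B\|_{L_x^pL_t^r([0,1])}\lesssim\lambda^{1/p-1/r}\|f_B\|_{L^p}$, and the constraint $\frac{2n+1}{p}+\frac1r<n$ --- forced by $p>p_0\ge\frac{2(n+1)}{n}$ and $r\ge r_0$ --- again leaves a power of $\lambda$ to spare.

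The step I expect to be the real obstacle is reconciling the $L^2$-normalized right-hand side delivered by the bilinear hypothesis with the $L^p$-normalized Besov norm demanded by the conclusion: for $p>2$ there is no pointwise-in-frequency inequality $\|f_B\|_{L^2}\lesssim\|f_B\|_{L^p}$, so one cannot simply plug in and sum. The remedy is to use the mixed-norm structure together with the locally constant property at unit frequency: tile physical space by unit cubes, note that for data Fourier-supported in a unit ball the $L_x^pL_t^r$-norm over a unit time interval is dominated by an $\ell^2$-aggregate of local $L^2$-masses (this uses $p,r\ge2$), and organize the Whitney sum so that the $L^2$-masses that appear are exactly those of the corresponding portions of $P_jf$ on those cubes. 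Moving the Lebesgue exponents from $(p_0,r_0)$ up to $(p,r)$ is then an interpolation between the resulting estimate and the elementary dispersive bound, and it is precisely the slack $p>p_0$ that lets the interpolation land strictly inside and yield the gain $\lambda^{-\eta}$. The leftover bookkeeping --- bounding the multiplicity of the Whitney slabs, disposing of the finitely many low frequencies, and restoring the interval $[0,1]$ after the space-time translations generated by the flow --- is routine.
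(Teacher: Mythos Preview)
This proposition is not proved in the paper; it is quoted from Lee, Rogers, and Seeger \cite{lrs13} and used as a black box in Section~\ref{sub.2}. So there is no in-paper argument to compare against. Your outline --- Littlewood--Paley reduction to a single dyadic block, Whitney decomposition of $|e^{it\Delta}P_jf|^2$, parabolic rescaling of each Whitney pair into the configuration of the bilinear hypothesis with $N\sim 2^k$, and geometric summation in $k$ driven by the strict inequality on $\gamma$ --- is exactly the scheme used in \cite{lrs13}, so in substance your proposal matches the cited source.

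The step you correctly flag as delicate, passing from the $L^2$-normalization supplied by the bilinear hypothesis to the $L^p$-based Besov norm in the conclusion, is indeed the crux, but your proposed fix is stated in the wrong direction: for $p>2$ an ``$\ell^2$-aggregate of local $L^2$-masses on unit cubes'' is simply $\|P_jf\|_{L^2}$, which is not controlled by $\|P_jf\|_{L^p}$. In \cite{lrs13} this is handled by a different mechanism: at each Whitney scale $\mu\le1$ the pieces $f_\tau$ have Fourier support in a $\mu$-ball, so Bernstein applies at that scale; the sum over Whitney pairs is then organized using almost orthogonality (the products $e^{it\Delta}f_{\tau_1}\overline{e^{it\Delta}f_{\tau_2}}$ at a fixed scale have essentially disjoint space-time Fourier supports), and the resulting $(p_0,r_0)$-bound is interpolated against a trivial endpoint to reach $(p,r)$. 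It is precisely the slack $p>p_0$ that makes this interpolation strict and produces the surplus $\lambda^{-\eta}$. Your diagnosis of the obstacle is right, but the execution should follow this orthogonality-and-interpolation route rather than the spatial-tiling picture you describe.
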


\appendix
\section{Epsilon removal for bilinear space-time estimate}

\begin{proposition}\label{p2.2}
Let $N\geq 1$. Suppose the Fourier supports of $f_1, f_2$ are in $\mathbb{B}^n(Ne_1, 1)$ with $$dist(supp\hat{f_1},supp\hat{f}_2)\geq \frac12$$ and
	\begin{equation}\label{n bilinear estimate}
		\||e^{it\Delta}f_1e^{it\Delta}f_2|^{\frac12}\|_{L_x^{p_0}L_t^{r_0}(P_R)}\lesssim_\epsilon N^{\frac{1}{p_0}-\frac{1}{r_0}}R^{\epsilon} \|f_1\|_{L^2}^{\frac12}\|f_2\|_{L^2}^{\frac12}
	\end{equation}
holds for $r_0\geq p_0\geq2$.
	Then, for any $p>p_0$ and $r>r_0$,
	$$\||e^{it\Delta}f_1e^{it\Delta}f_2|^{\frac12}\|_{L_x^{p_0}L_t^{r_0}(\mathbb{R}^{n+1})}\lesssim N^{\frac{1}{p}-\frac{1}{r}} \|f_1\|_{L^2}^{\frac12}\|f_2\|_{L^2}^{\frac12}.$$
\end{proposition}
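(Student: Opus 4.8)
The plan is to run the $\epsilon$-removal scheme of Tao \cite{t03} (cf.\ also \cite{lrv11}), adapted both to the anisotropic geometry of $P_R$, which encodes the propagation speed $N$, and to the mixed norm $L^p_xL^r_t$. We may assume $f_1,f_2$ are smooth and compactly supported. Set $G:=e^{it\Delta}f_1\cdot e^{it\Delta}f_2$, so that $\||e^{it\Delta}f_1e^{it\Delta}f_2|^{1/2}\|_{L^p_xL^r_t}=\||G|^{1/2}\|_{L^p_xL^r_t}$. Since $\widehat{e^{it\Delta}f_i}$ lives on the paraboloid cap $\{(\xi,2\pi|\xi|^2):\xi\in\mathrm{supp}\,\hat f_i\}$ and $\mathrm{dist}(\mathrm{supp}\,\hat f_1,\mathrm{supp}\,\hat f_2)\geq\frac12$, the transversality of these caps forces $\hat G$ to be supported in a space--time box of dimensions $\sim1\times\cdots\times1\times N$; hence $G$ is essentially constant on dual boxes of dimensions $\sim1\times\cdots\times1\times\frac1N$. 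This local constancy in the time variable at scale $1/N$ is the mechanism that will produce, via H\"older, the sharp weight $N^{\frac1p-\frac1r}$, and it must be tracked throughout because the space--time norm is \emph{not} Galilean invariant --- one cannot simply normalize $N=1$.

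First I would perform the usual dyadic reductions. By a layer--cake decomposition in the spatial fibers together with pigeonholing, it suffices to prove a restricted-type bound on unions of translates of $P_R$: for every finite family $\{P_R^{(j)}\}_{j=1}^{M}$ of translates of $P_R$ that is sparse --- in the sense that for every $R'\geq R$ each $R'$-parallelepiped meets at most $(R'/R)^{\delta_0}$ of them --- and $\Omega:=\bigcup_j P_R^{(j)}$, one has
$$
\||G|^{1/2}\|_{L^{p_0}_xL^{r_0}_t(\Omega)}\lesssim_{\epsilon}R^{O(\delta_0)}\,N^{\frac{1}{p_0}-\frac{1}{r_0}}R^{\epsilon}\,\|f_1\|_{L^2}^{1/2}\|f_2\|_{L^2}^{1/2},
$$
with a bound essentially independent of $M$ thanks to sparseness. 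On such an $\Omega$ I would decompose $f_i=\sum_\nu f_{i,\nu}$ into pieces with $\hat f_{i,\nu}$ supported in $R^{-1/2}$-caps, so that each $e^{it\Delta}f_{i,\nu}$ is, up to $\mathrm{RapDec}(R)$, concentrated on a single $R$-tube and hence has its mass distributed among only boundedly many of the $P_R^{(j)}$; then I would combine the local hypothesis \eqref{n bilinear estimate} on each $P_R^{(j)}$ with an $L^2$-orthogonality estimate summed over $j$, the latter being where sparseness together with the $1\times\cdots\times1\times N$ Fourier support of $G$ is used to kill the cross terms between widely separated parallelepipeds.

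Finally I would sum over the dyadic clustering scales. Writing an arbitrary union of $R$-parallelepipeds as $O(\log)$-many sparse families at different scales and applying the sparse bound to each, the $R^{O(\delta_0)}$ and $R^{\epsilon}$ losses are absorbed by the gap between $(p,r)$ and $(p_0,r_0)$: interpolating the sparse bound against the trivial estimate
$$
\||e^{it\Delta}f_1e^{it\Delta}f_2|^{1/2}\|_{L^2_xL^\infty_t(P_R)}\lesssim (NR^2)^{1/2}\|f_1\|_{L^2}^{1/2}\|f_2\|_{L^2}^{1/2}
$$
converts the excess of $(p,r)$ over $(p_0,r_0)$ into a genuine negative power $R^{-c}$ with $c=c(p,r,p_0,r_0)>0$, which dominates all $R^{O(\delta_0)}$ and $R^{\epsilon}$ factors once $R$ is taken large; letting $R\to\infty$ then yields the global estimate with the asserted weight $N^{\frac1p-\frac1r}$. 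The step I expect to be the main obstacle is the $L^2$-orthogonality across the $P_R^{(j)}$ in the presence of the mixed norm and the sheared parallelepipeds: in the usual scalar $L^q$ $\epsilon$-removal one simply splits $\|\cdot\|_{L^q(\Omega)}^q=\sum_j\|\cdot\|_{L^q(P_R^{(j)})}^q$ exactly, whereas with $L^p_xL^r_t$ the $x$- and $t$-summations do not decouple, so one must organize the argument so that the time fibers are handled by H\"older at the local-constancy scale $1/N$, which is precisely where the correct $N$-dependence is generated.
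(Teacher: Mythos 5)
Your overall architecture (reduce to a sparse-family estimate, cover general unions of parallelepipeds by sparse families, then exploit the gap $p>p_0$, $r>r_0$ to absorb the $R^\epsilon$ loss) matches the skeleton of the paper's Appendix A proof, which in turn follows Tao's scalar $\epsilon$-removal scheme. However, the proposal has two substantive gaps. First, the sparseness notion is wrong: you ask only that every $R'$-parallelepiped with $R'\geq R$ meets $O((R'/R)^{\delta_0})$ members of the family, but the paper requires the much stronger condition that the centers $z_\alpha$ be separated by $(MR)^\gamma$ for a fixed large $\gamma$. This strong separation is essential to the $L^2$-orthogonality step: the paper does not run a wave-packet decomposition at scale $R^{-1/2}$ (your proposal) but instead truncates the phase-shifted data $g$ to a ball of radius $R_1\sim MR$, dualizes $\sum_\alpha\|f_\alpha\|_{L^2}^2$, and shows the off-diagonal pairings $\langle e^{2\pi i\phi(z_\alpha;\xi)}\hat\zeta_\alpha,e^{2\pi i\phi(z_\beta;\xi)}\hat\zeta_\beta\rangle$ are $O((MR)^{-n\gamma/2})$ by stationary phase, using precisely that $|z_\alpha-z_\beta|\geq(MR)^\gamma$ forces either a large gradient or a large Hessian determinant of the phase difference. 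With your weaker fractal sparseness and a single wave packet of length $R$ potentially meeting many members of the family, the cross terms would not decay. The $1\times\cdots\times1\times N$ Fourier support of $G$ is used for local constancy in $t$ at scale $1/N$ (which is what generates the $N^{1/p-1/r}$), not for the orthogonality across parallelepipeds.

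Second, and more seriously, the final step does not work as stated. You propose to absorb the $R^{O(\delta_0)+\epsilon}$ loss by interpolating the sparse bound at $(p_0,r_0)$ against the trivial $L^2_xL^\infty_t(P_R)$ estimate. But the interpolation segment between $(1/p_0,1/r_0)$ and $(1/2,0)$ has $1/p$ ranging over $[1/p_0,1/2]$, i.e.\ $p\leq p_0$; it cannot reach $p>p_0$. Moreover, both endpoints of that interpolation carry positive powers of $R$, so no negative power $R^{-c}$ can result. The correct mechanism in the paper is different: decompose $F=\sum_kF_k$ with $|F_k|\sim2^{-k}$, further split $E_k=\bigcup_jE_k^j$ by the time-fiber cardinality $\sim 2^j$, apply the covering lemma (Lemma \ref{l4.1}, from Tao and refined by Cho--Koh--Lee) with depth $I\sim\log(1/\epsilon)$ to each $E_k^j$, and observe that for $1/p_1=(1/p_0-\delta(\epsilon))(1-\epsilon)$ and $1/r+\epsilon\leq(1/r_0-\delta(\epsilon))(1-\epsilon)$ the dyadic sum $\sum_{k,j}2^{(-k-j)\epsilon}$ converges. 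The loss is removed by summing a geometric series, not by interpolation against a lossy trivial bound. Only afterwards does the paper interpolate, and it is against the lossless $\|\cdot\|_{L^\infty_xL^r_t}\lesssim N^{-1/r}$ estimate (to extend from $p_1$ to all $p\geq p_1$), not against $L^2_xL^\infty_t$. You would need to replace your final step with this level-set--covering argument; as written it does not produce the claimed estimate in the range $p>p_0$.
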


The above space-time $\epsilon$-removing argument was proven by Lee, Rogers and Vargas \cite{lrv11}. Here, we give another proof by making use of an argument of Bourgain and Guth \cite{bg11}.

\begin{definition}[$(M,R)$-sparse parallelepipeds]
 A finite collection $\{P_R(z_{\alpha})\}_{\alpha=1}^{M}$ of parallelepipeds in $\mathbb{R}^{n+1}$ is called $(M,R)$-sparse 
	if the centers $z_{\alpha}$ are $(MR)^{\gamma}$-separated, i.e.
	$$|z_{\alpha_0}-z_{\alpha_1}|\geq (MR)^{\gamma}\ \ \text{with}\ \ \alpha_0\neq \alpha_1,$$
	where $\gamma$ is a fixed number.
\end{definition}

The following cover lemma which was first proved by Tao \cite[Lemma 3.3]{t99} and refined by Cho, Koh and Lee \cite[Lemma 2.6]{ckl22}. We transform their result into the parallelepiped by Galilean transform.

\begin{lemma}\label{l4.1}
	Let $E\subset \mathbb R^{n+1}$ be  a finite union of finitely overlapping parallelepipeds of size $c\sim 1$. Then for each $K\in \mathbb{N}$, there are subsets $E_1,E_2,...,E_K$ of $E$ with
	$$E=\bigcup_{k=1}^{K}E_k$$
	such that each $E_k$ has $O(|E|^{1/K})$ number of $(O(|E|),|E|^{O({\gamma}^{k-1})})$ sparse collections
	$$S_1,S_2,...,S_{O(E^{1/K})}$$
	of which the union $S_1\cup S_2\cup...\cup S_{O(E^{1/K})}$ is a covering of $E_k$.
\end{lemma}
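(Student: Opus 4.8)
The plan is to deduce Lemma~\ref{l4.1} from its known analogue for cubes — Tao's \cite[Lemma 3.3]{t99}, as refined by Cho, Koh and Lee \cite[Lemma 2.6]{ckl22} — by transporting everything through the Galilean transform $\mathcal{G}\colon(x,t)\mapsto(x-4\pi tNe_1,t)$, exactly as the statement suggests. First I would record the elementary features of $\mathcal{G}$: it is a linear automorphism of $\mathbb{R}^{n+1}$ with $\det\mathcal{G}=1$, hence volume preserving, with inverse $\mathcal{G}^{-1}(x,t)=(x+4\pi tNe_1,t)$; and, directly from the definition of $P_r(z_0)$, one has $\mathcal{G}(P_r(z_0))=Q_r(x_0)\times[t_0-r,t_0+r]$ for $z_0=(x_0,t_0)$, a box all of whose side lengths are comparable to $r$. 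Thus $\mathcal{G}$ carries the parallelepipeds of size $c\sim1$ making up $E$ to finitely overlapping boxes comparable to unit cubes (with the overlap multiplicity preserved exactly, $\mathcal{G}$ being a bijection), and $\mathcal{G}(E)$ is a finite union of such boxes with $|\mathcal{G}(E)|=|E|$; the cube lemma of \cite{t99,ckl22} applies equally to boxes of comparable dimensions, or one may first dilate by a factor $2$ in time to obtain genuine cubes.

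Next I would apply that cube version to $\mathcal{G}(E)$: for each $K\in\mathbb{N}$ there are subsets $F_1,\dots,F_K\subset\mathcal{G}(E)$ with $\mathcal{G}(E)=\bigcup_{k=1}^K F_k$ such that each $F_k$ is covered by $O(|\mathcal{G}(E)|^{1/K})=O(|E|^{1/K})$ many $(O(|E|),|E|^{O(\gamma^{k-1})})$-sparse collections of cubes. Setting $E_k:=\mathcal{G}^{-1}(F_k)$ gives $E=\bigcup_{k=1}^K E_k$, and the $\mathcal{G}^{-1}$-images of those sparse cube collections are collections of parallelepipeds $P_r(\cdot)$ of the required sizes that cover $E_k$, their number still being $O(|E|^{1/K})$.

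It then remains to check that sparseness survives the pullback. Since $\mathcal{G}$ and $\mathcal{G}^{-1}$ are linear with operator norms $O(1+N)$, points that are $D$-separated have $\mathcal{G}^{-1}$-images that are at least $D/(C(1+N))$-separated. Applied to an $(M,R)$-sparse collection with $M=O(|E|)$ and $R=|E|^{O(\gamma^{k-1})}$, whose centers are $(MR)^\gamma$-separated, this makes the pulled-back centers $(MR)^\gamma/(C(1+N))$-separated; because $N$ is a fixed parameter while $MR$ is a fixed power of $|E|$ that may be assumed arbitrarily large, this is at least $(MR)^{\gamma'}$ for any fixed $\gamma'<\gamma$. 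Renaming $\gamma'$ as $\gamma$ — admissible since the definition only asks for some fixed exponent — we conclude that each $E_k$ is covered by $O(|E|^{1/K})$ many $(O(|E|),|E|^{O(\gamma^{k-1})})$-sparse collections of parallelepipeds, which is the claim.

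I do not expect a genuine obstacle here: the whole content is the cube statement of \cite{t99,ckl22}, which we are entitled to use, and $\mathcal{G}$ is a volume-preserving linear map, so finite overlap, comparability of the sizes to $1$, and separation of centers are all manifestly preserved up to constants. The one point that warrants a line of care — and the closest thing to a difficulty — is the $N$-dependence of the bi-Lipschitz constant of $\mathcal{G}$; this is harmless precisely because in every use of Lemma~\ref{l4.1}, in particular in the proof of Proposition~\ref{p2.2}, the frequency scale $N$ is held fixed while the cardinalities in play are taken polynomially large in the large parameter $R$, so the induced loss in the separation exponent is absorbed into the fixed constant $\gamma$.
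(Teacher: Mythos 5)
Your proposal is correct and matches the paper exactly in approach: the paper offers no argument beyond the one sentence "We transform their result into the parallelepiped by Galilean transform," and you simply carry out that transfer. The volume invariance of the shear, the identification $\mathcal{G}(P_r(z_0))=Q_r(x_0)\times[t_0-r,t_0+r]$, and the preservation of finite overlap are all as you describe, and the separation bookkeeping is correctly handled.

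One remark worth adding. The only place you flag a potential obstacle is the $N$-dependent bi-Lipschitz constant of $\mathcal{G}$, which you absorb by shrinking the sparsity exponent when $MR$ is large relative to $N$. That is fine for proving the lemma exactly as stated (with Euclidean separation of the centers $z_\alpha$), but it is worth noticing that in the only place the lemma is used — the stationary-phase estimate of the off-diagonal terms in the proof of Proposition~\ref{p2.2} — what is genuinely needed is that, writing $\xi=Ne_1+\eta$ with $|\eta|\le1$,
$$\nabla_\xi\bigl[\phi(x_\alpha,t_\alpha;\xi)-\phi(x_\beta,t_\beta;\xi)\bigr]=(\tilde x_\alpha-\tilde x_\beta)-4\pi(t_\alpha-t_\beta)\eta,\qquad \tilde x=x-4\pi tNe_1,$$
be large (or the Hessian, $-4\pi(t_\alpha-t_\beta)I$, be nondegenerate). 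Thus the quantity that must be $(MR)^\gamma$-separated is precisely $(\tilde x_\alpha,t_\alpha)$, i.e.\ the $\mathcal{G}$-images of the centers — and those separations are preserved exactly under your pullback, with no $N$-loss at all. So your final paragraph's worry is even more cosmetic than you argue: if one phrases sparsity in the sheared coordinates (which is what the application really asks for), the transfer is lossless, and your renaming of $\gamma$ is not even needed.
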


\begin{proof}
	We assume that $\|f_i\|_{L^2}=1\,(i=1,2)$ and denote $F(x,t):=|e^{it\Delta}f_1(x)e^{it\Delta}f_2(x)|^{\frac12} $. Given a $(M,R)$-sparse set $E=\bigcup_{\alpha=1}^{M}P_R(z_{\alpha})$ with $z_{\alpha}=(x_{\alpha},t_{\alpha})$. We first  show that
	\begin{equation}\label{e4.1}
		\|F|_{E}\|_{L_x^{p_0}L_t^{r_0}}\leq C_{\epsilon}N^{\frac{1}{p_0}-\frac{1}{r_0}}R^{\epsilon}.
	\end{equation}
	
	In fact, let $\phi (x,t;\xi)=x\cdot\xi-2\pi t|\xi|^{2}$ and $(x,t)\in P_R(z_{\alpha})$, we write
	\begin{equation}\label{e4.2x}
		e^{it\Delta}f(x)=\int e^{2\pi i (\phi (x,t;\xi)-\phi (x_{\alpha},t_{\alpha};\xi))}(e^{2\pi i \phi (x_{\alpha},t_{\alpha};\xi)}\hat{f}(\xi))\omega(\xi) d\xi,
	\end{equation}
	where $\omega(\xi)$ is a smooth function localized on $\supp \hat{f}$. Define $\hat{g}(\xi):=e^{2\pi i \phi (x_{\alpha},t_{\alpha};\xi)}\hat{f}(\xi)$, then
	\begin{equation}\label{e4.3x}
		\eqref{e4.2x}=\int \int e^{2\pi i (\phi (x,t;\xi)-\phi (x_{\alpha},t_{\alpha};\xi)-\xi y)}\omega(\xi) d\xi g(y)dy.
	\end{equation}
	Let $\eta$ be a bump function satisfy $0\leq \eta \leq 1$ and $\supp \eta\subset \mathbb{B}^n(0,1)$, $\eta_{R_1}(y):=\eta(\frac{y}{R_1})$, $R_1:=100MR$ and $P_{(0,R_1)}g:=g\eta_{R_1}$. Since 
	\begin{align*}
		|\bigtriangledown_{\xi}[\phi (x,t;\xi)-\phi (x_{\alpha},t_{\alpha};\xi)]|
		&=|x-x_{\alpha}-4\pi(t-t_{\alpha})\xi|\\
		& \leq |x-x_{\alpha}-4\pi(t-t_{\alpha})Ne_1|+|4\pi(t-t_{\alpha})(\xi-Ne_1)|\\
		& \lesssim R,
	\end{align*}
	we have 
	$$|\eqref{e4.3x}|\lesssim \lf|\int \int e^{2\pi i (\phi (x,t;\xi)-\phi (x_{\alpha},t_{\alpha};\xi)-\xi y)}\omega(\xi) d\xi P_{(0,R_1)}g(y)dy\r|+\text{RapDec}(MR).$$ Define $$\hat{f_{\alpha}}:=e^{-2\pi i\phi (x_{\alpha},t_{\alpha};\xi)}\widehat{P_{(0,R_1)}g},$$ then 
	$$|e^{it\Delta}f(x)|\lesssim |e^{it\Delta}f_{\alpha}(x)|+\text{RapDec}(MR).$$ 	
 By the above argument, $r_0\geq p_0\geq2$ and \eqref{n bilinear estimate}, we have
	\begin{align}\label{e4.3}
		\|F|_E\|_{L_x^{p_0}L_t^{r_0}}^{p_0}
& \leq \sum_{\alpha}\|F\|_{L_x^{p_0}L_t^{r_0}(P_R(z_{\alpha}))}^{p_0}\nonumber\\ 
& \lesssim_{\epsilon}R^{p_0\epsilon}N^{p_0(\frac{1}{p_0}-\frac{1}{r_0})}\sum_{\alpha}(\|f_{1,\alpha}\|_{L^2}^{\frac12} \|f_{2,\alpha}\|_{L^2}^{\frac12})^{p_0}\nonumber\\
	&	\lesssim_{\epsilon}R^{p_0\epsilon}N^{p_0(\frac{1}{p_0}-\frac{1}{r_0})}\Big(\sum_{\alpha}\|f_{\alpha}\|_{L^2}^2\Big)^{\frac{p_0}{2}},
	\end{align}
	where $$\sum_{\alpha}\|f_{\alpha}\|_{L^2}^2=\max\Big\{\sum_{\alpha}\|f_{1,\alpha}\|_{L^2}^2,\sum_{\alpha}\|f_{2,\alpha}\|_{L^2}^2 \Big\}.$$
By duality and Plancherel's theorem, there exist $\{\zeta_{\alpha}\}_{\alpha}$ satisfying supp $\zeta_{\alpha}\subset \mathbb{B}^n(0,R_1)$ and $\sum_{\alpha}\|\zeta_{\alpha}\|_{L^2}^2=1$ such that

\begin{align}\label{eA.3}
	\Big(\sum_{\alpha}\|f_{\alpha}\|_{L^2}^2\Big)^{\frac{1}{2}} 
	&  
	=\sum_{\alpha}<P_{(0,R_1)}g,\zeta_{\alpha}> \nonumber\\
	& 
	=\sum_{\alpha}<e^{2\pi i \phi (x_{\alpha},t_{\alpha};\xi)}\hat{f},\hat{\zeta_{\alpha}}> \nonumber\\
	& 
	=\sum_{\alpha}<\hat{f},e^{2\pi i \phi (x_{\alpha},t_{\alpha};\xi)}\hat{\zeta_{\alpha}}>.
\end{align} 
From \eqref{eA.3} and Cauchy-Schwarz inequality, it follows that
\begin{align}\label{eA.4}
	\Big(\sum_{\alpha}\|f_{\alpha}\|_{L^2}^2\Big)^{\frac{1}{2}}
	& \leq \Big\|\sum_{\alpha}e^{2\pi i \phi (x_{\alpha},t_{\alpha};\xi)}\hat{\zeta_{\alpha}}\Big\|_{L^2}\nonumber\\ 
	& \leq \Big(\sum_{\alpha}\|\zeta_{\alpha}\|_{L^2}^2\Big)^{\frac12}+\Big(\sum_{\alpha,\beta:\alpha\neq\beta}  |<e^{2\pi i \phi (x_{\alpha},t_{\alpha};\xi)}\hat{\zeta_{\alpha}},e^{2\pi i \phi (x_{\beta},t_{\beta};\xi)}\hat{\zeta_{\beta}}>|\Big)^{\frac12}.
\end{align}
For the off-diagonal terms $\alpha\neq \beta$,
\begin{align}\label{eA.5}
		 |<e^{2\pi i \phi (x_{\alpha},t_{\alpha};\xi)}\hat{\zeta_{\alpha}},e^{2\pi i \phi (x_{\beta},t_{\beta};\xi)}\hat{\zeta_{\beta}}>|
		 =\Big|\int e^{2\pi i [\phi (x_{\alpha},t_{\alpha};\xi)-\phi (x_{\beta},t_{\beta};\xi)]}\hat{\zeta_{\alpha}}(\xi)\overline{\hat{\zeta_{\beta}}}(\xi)d\xi\Big|.
\end{align}
Since $|z_{\alpha}-z_{\beta}|\geq (MR)^{\gamma}$ with $z_{\alpha}=(x_{\alpha},t_{\alpha}),\ z_{\beta}=(x_{\beta},t_{\beta})$, $|\phi (x_{\alpha},t_{\alpha};\xi)-\phi (x_{\beta},t_{\beta};\xi)|$ satisfies either
$$|\bigtriangledown_{\xi}[\phi (x_{\alpha},t_{\alpha};\xi)-\phi (x_{\beta},t_{\beta};\xi)]|\gtrsim (MR)^{\gamma}$$
or
$$|\text{det}\, D_{\xi}^2 [\phi (x_{\alpha},t_{\alpha};\xi)-\phi (x_{\beta},t_{\beta};\xi)]|\gtrsim (MR)^{n\gamma}.$$
By the method of stationary phase, we have 
$$\text{\eqref{eA.5}}\lesssim (MR)^{-n\gamma/2}\|\zeta_{\alpha}\|_{L^1}\|\zeta_{\beta}\|_{L^1}\lesssim R_1^n(MR)^{-n\gamma/2}\|\zeta_{\alpha}\|_{L^2}\|\zeta_{\beta}\|_{L^2}.$$
By choosing $\gamma>2$, we obtain
$\eqref{eA.4} \lesssim 1$ which implies \eqref{e4.1}.

	Since $\hat{f}_1$ and $\hat{f}_2$ are supported on $\mathbb{B}^n(Ne_1, 1)$, we have $\|F\|_\infty\leq \|f_1\|_{L^2}\|f_2\|_{L^2}=1$. We decompose 
	$$F=\sum_{k\geq 0}F|_{|F|\sim 2^{-k}}:=\sum_{k\geq 0}F_{k},$$
	where $\text{supp}F_k\subset E_k$ and $E_k$ is the union of parallelepipeds of size $c$ ($c$-parallelepipeds). We denote by proj$(E_k)$ the projection of $E_k$ along the $(4\pi Ne_1,1)$ onto the $x$-plane. For each grid point $x\in c\mathbb{Z}^n\cap \text{proj}(E_k) $, we define $E_{k,x}$ to be the union of
	$c$-parallelepipeds such that $x\in$proj$(E_k)$ and
	$$E_k^j=\bigcup\{E_{k,x}: \sharp(E_{k,x})\sim 2^j\}.$$
 We have $$E_k=\bigcup _{j\geq 0}E_k^j.$$
	Let $$I=\frac{\log(1/\epsilon)}{2\log \gamma}+1.$$ By Lemma \ref{l4.1}, we have 
	$$E_k^j=\bigcup_{i= 1}^IE_{k,i}^j\ \ \text{and}\ \ E_{k,i}^j\subset \bigcup_{|m|\lesssim|E_k^j|^{1/I}}\bigcup_{\alpha\lesssim |E_k^j|}P_{R_m}(z_{\alpha}),$$
	where $R_m\lesssim |E_k^j|^{C\gamma^{i-1}}$. From \eqref{e4.1}, we obtain that 
	$$\|F_k|_{E_{k,i}^j}\|_{L_x^{p_0}L_t^{r_0}}\leq C_{\epsilon} N^{\frac{1}{p_0}-\frac{1}{r_0}}|E_{k}^{j}|^{1/I}(|E_k^j|^{C{\gamma}^{i-1}})^{\epsilon}.$$
	Sum up $1\leq i\leq I$,
	$$\|F_k|_{E_{k}^j}\|_{L_x^{p_0}L_t^{r_0}}\leq C_{\epsilon}N^{\frac{1}{p_0}-\frac{1}{r_0}}|E_{k}^{j}|^{1/I}(|E_k^j|^{C{\gamma}^{I-1}})^{\epsilon}.$$
	Combining this estimate and the definition of $E_{k}^{j}$, we have
	\begin{equation*}
		2^{-k}2^{j/r_0}|\text{proj}(E_k^j)|^{1/p_0}\lesssim N^{\frac{1}{p_0}-\frac{1}{r_0}} (2^j|\text{proj}(E_k^j)|)^{\delta(\epsilon)},
	\end{equation*}
	where $\delta(\epsilon)=1/I+C\epsilon{\gamma}^{I-1}$. Since $\lim_{\epsilon \to 0}\delta(\epsilon)=0$, then there exists $\epsilon>0$ such that 
	$$\frac{1}{r}+\epsilon\leq \Big(\frac{1}{r_0}-\delta(\epsilon)\Big)(1-\epsilon) $$
	and
	$$\frac{1}{p_1}=  \Big(\frac{1}{p_0}-\delta(\epsilon)\Big)(1-\epsilon).$$
	For the decomposition of $F$, we have 
	\begin{align}\label{ea.1}
		\|F\|_{L_x^{p_1}L_t^{r}}
		& \leq \sum_{k\geq 0}\sum_{j\geq 0}\|F_k|_{E_{k}^j}\|_{L_x^{p_1}L_t^{r}} \nonumber\\
		& \lesssim \sum_{k\geq 0}\sum_{j\geq 0}2^{-k}2^{j/r}|\text{proj}(E_k^j)|^{1/p_1}\nonumber\\
		& \lesssim N^{\frac{1}{p_1}-\frac{1}{r}}\sum_{k\geq 0}\sum_{j\geq 0}2^{(-k-j)\epsilon}\nonumber\\
		& \lesssim CN^{\frac{1}{p_1}-\frac{1}{r}}.
	\end{align}
	On the other hand, for $1\leq r\leq\infty$,
	$$\||e^{it\Delta}f_1e^{it\Delta}f_2|^[\frac12]\|_{L_x^{\infty}L_t^{r}(\mathbb{R}^{n+1})}\lesssim N^{-\frac1r}\|f_1\|^{\frac12}_{L^2}\|f_2\|^{\frac12}_{L^2}.$$
	It was proved by \cite{r09}. By interpolation it with \eqref{ea.1}, we obtain for $p\geq p_1$,
	$$\||e^{it\Delta}f_1e^{it\Delta}f_2|^{\frac12}\|_{L_x^{p}L_t^{r}(\mathbb{R}^{n+1})} \leq C_{p,r} N^{\frac1p-\frac1r}\|f_1\|^{\frac12}_{L^2}\|f_2\|^{\frac12}_{L^2}.$$
	Hence, we complete the proof of Proposition \ref{p2.2}.
	
\end{proof}




\section{A bilinear space-time estimate conjecture}\label{s5}

We now formulate a bilinear space-time conjecture which implies Conjecture \ref{c1.1}.

\begin{conjecture}[Bilinear space-time estimate]\label{c1.2} Suppose that $\hat{f_1},\ \hat{f_2}$ are supported on subsets of $\mathbb{B}^n(N e_1,1)$ with $\text{dist}(\text{supp}\hat{f_1},\text{supp}\hat{f_2})\geq \frac{1}{2}$.
	\begin{equation}\label{e4.1a}
		\||e^{it\Delta}f_1e^{it\Delta}f_2|^{\frac12}\|_{L_x^{p}L_t^{r}(\mathbb{R}^{n+1})}\leq C_{p,r} N^{\frac{1}{p}-\frac{1}{r}} \|f_1\|^{\frac12}_{L^2}\|f_2\|^{\frac12}_{L^2},
	\end{equation} holds if and only if  $\frac{n+2}{p}+\frac{1}{r}\leq \frac{n+1}{2}, 2\leq p,r<\infty$. 
\end{conjecture}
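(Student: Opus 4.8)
Since \eqref{e4.1a} is stated as a conjecture, a proof proposal here naturally has two halves: settling the necessity of the exponent condition (accessible), and laying out a program for the sufficiency that makes transparent where it meets an obstruction as deep as the restriction conjecture.

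\emph{Necessity.} Suppose \eqref{e4.1a} holds for some $(p,r)$ with $2\le p,r<\infty$. I would test it against the usual extremizers, transplanted to the present ``tilted box'' normalization. The first is a Galilean-tilted Knapp example: take $\hat f_1,\hat f_2$ to be indicators of two transverse slabs inside $\mathbb B^n(Ne_1,1)$ at separation $\sim\tfrac12$, with anisotropic dimensions chosen so that after dualizing one recovers exactly the extremizer behind Tao's sharp bilinear restriction estimate on the diagonal $r=p$. The second is a focusing example, with $\hat f_1,\hat f_2$ indicators of unit-separated balls. In either case $e^{it\Delta}f_i$ is, up to rapidly decaying tails, constant on a slab moving with group velocity $\approx(4\pi Ne_1,1)$, so $|e^{it\Delta}f_1e^{it\Delta}f_2|^{1/2}$ concentrates on the intersection of the two slabs; the crucial point is that measuring $L^p_xL^r_t$ over this intersection — which is tilted with slope $\sim N$ — produces precisely the weight $N^{1/p-1/r}$, matching the right side of \eqref{e4.1a}. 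Comparing the powers of the small cap-parameter on the two sides then forces $\tfrac{n+2}{p}+\tfrac1r\le\tfrac{n+1}{2}$ (on the diagonal $r=p$ this reads $p\ge\tfrac{2(n+3)}{n+1}$, the sharpness of \cite{t03}; at $r=\infty$ it reads $p\ge\tfrac{2(n+2)}{n+1}$), while $p,r\ge2$ is by hypothesis. This part is routine.

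\emph{Sufficiency.} I would reduce to the critical line $\tfrac{n+2}{p}+\tfrac1r=\tfrac{n+1}{2}$; the strictly subcritical region then follows from it by the $\epsilon$-removal of Proposition \ref{p2.2} (Appendix~A), which upgrades a $P_R$-estimate with $R^\epsilon$ loss into a clean global one on the open region. The critical-line estimate over $P_R$ (with $R^\epsilon$ loss, which is all Proposition \ref{p2.2} requires) I would obtain by interpolating estimates at the four vertices of the region $\{\tfrac{n+2}{p}+\tfrac1r\le\tfrac{n+1}{2},\ p,r\ge2\}$ in the $(\tfrac1p,\tfrac1r)$-plane: (i) the trivial vertex $L^\infty_{x,t}$; (ii) the vertex $(p,r)=(\infty,2)$, where $\||e^{it\Delta}f_1e^{it\Delta}f_2|^{1/2}\|_{L^\infty_xL^r_t}\lesssim N^{-1/r}\|f_1\|_2^{1/2}\|f_2\|_2^{1/2}$ is known (Rogers \cite{r09}); (iii) the vertex $(p,r)=(\tfrac{2(n+2)}{n},2)$, an $L^2$-based bilinear estimate accessible from Plancherel in $t$ together with $L^2$ restriction (Tomas--Stein) theory and transversality of the paraboloid, the weight $N^{1/p-1/r}$ emerging from Bernstein in the time-frequency variable; and (iv) the vertex $(p,r)=(\tfrac{2(n+2)}{n+1},\infty)$, a sharp \emph{bilinear maximal} (Carleson-type) estimate over $P_R$. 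For (iv) the present paper already supplies, via Theorems \ref{t2.1} and \ref{t4.6} and the polynomial-partitioning and broad-norm machinery of Sections \ref{2D case}--\ref{Alg}, the estimate for $p\ge\tilde p_n$ (and $p=3$ when $n=2$); the plan is to drive that machinery down to the sharp exponent $\tfrac{2(n+2)}{n+1}$, that is, to prove a bilinear analogue of \eqref{local maximal estimate} in the full conjectural range. With the four vertices in hand, bilinear complex interpolation in the mixed norm $L^{p/2}_xL^{r/2}_t$ applied to the product $e^{it\Delta}f_1\cdot e^{it\Delta}f_2$ fills the whole region; feeding the result — rescaled so that $N\sim2^j$ — into Proposition \ref{p1.2} then recovers Conjecture \ref{c1.1}, as advertised.

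\emph{The main obstacle} is vertex (iv): a sharp bilinear maximal estimate at $p=\tfrac{2(n+2)}{n+1}$ with only $\epsilon$-loss. This is strictly stronger than anything proved here — even for $n=2$ the paper reaches $p=3$, not the conjectural $\tfrac83$ — and is of the same order of depth as the restriction conjecture together with a sharp pointwise-convergence bound; in two dimensions it already subsumes the sharp form of the Du--Guth--Li theorem \cite{dgl17} with $L^\infty_t$ replacing the presently available weaker norms. A secondary, more technical difficulty sits at the corners themselves: \cite{t03} holds only on the open range $p>\tfrac{2(n+3)}{n+1}$, so the \emph{closed}-line statement in \eqref{e4.1a} either needs the (open) endpoint bilinear restriction theorem or must be relaxed to $\tfrac{n+2}{p}+\tfrac1r<\tfrac{n+1}{2}$; I expect the honest output of this program to be the conjecture on that open region, conditional on the sharp bilinear maximal estimate, with the closed line left open. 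Finally, one must check at every stage that the interpolation and $\epsilon$-removal steps not merely hold but actually reproduce the exact weight $N^{1/p-1/r}$: this is the bookkeeping that encodes the lack of Galilean invariance and distinguishes \eqref{e4.1a} from its Strichartz analogue, and it has to be carried along throughout rather than patched in at the end.
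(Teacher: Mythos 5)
Since this is a \emph{conjecture}, the only part the paper actually proves is the necessity direction, which appears as a Proposition in Appendix~B; the sufficiency is left open, and the paper's only remaining content on it is Remark~B.2 listing implications. So the comparison should focus on your necessity sketch and then on what to make of your sufficiency programme.

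On necessity, you are on the same track as the paper — a Galilean-tilted Knapp/Foschi--Klainerman example is exactly what the paper uses — but labelling it ``routine'' hides where the content lies. The paper's example is quite specific: the caps are anisotropic boxes of size $\epsilon$ in $\xi_1$, $\epsilon^2$ in $\xi_2$, and $\epsilon$ in $\xi_3,\dots,\xi_n$, with the $\epsilon^2$-thin direction being the \emph{separation} direction $e_2$, not the drift direction $e_1$. With those dimensions one gets $\|f_i\|_{L^2}\sim\epsilon^{(n+1)/2}$ and the product is $\sim\epsilon^{2(n+1)}$ on a tilted box $A$ whose $x$-projection has measure $\sim N\epsilon^{-(n+2)}$ and whose $t$-fibres have measure $\sim\epsilon^{-1}/N$, giving $\|\chi_A\|_{L^p_xL^r_t}\gtrsim \epsilon^{-1/r-(n+2)/p}N^{1/p-1/r}$; matching powers of $\epsilon$ yields exactly $\tfrac{n+2}{p}+\tfrac1r\le\tfrac{n+1}{2}$. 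Your sketch does not pin down this anisotropy, and without it the exponent count does not come out. A single example of this form also suffices for the whole boundary — you do not need both a Knapp and a separate focusing example, as you suggest.

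On sufficiency, the paper makes no attempt, so your material is genuinely extra rather than a competing route. Your vertex picture is consistent: the four corners $(\infty,\infty)$, $(\infty,2)$, $\bigl(\tfrac{2(n+2)}{n},2\bigr)$, $\bigl(\tfrac{2(n+2)}{n+1},\infty\bigr)$ do span the conjectured region, and the $N^{1/p-1/r}$ weight is indeed affine in $(1/p,1/r)$ so it interpolates correctly. You are also right that the decisive ingredient is the sharp bilinear maximal vertex $\bigl(\tfrac{2(n+2)}{n+1},\infty\bigr)$, which is strictly stronger than Theorems~\ref{t2.1} and~\ref{t4.6}, and that the honest output of the $\epsilon$-removal (Proposition~\ref{p2.2}) is only the open region $p>p_0$, $r>r_0$, so the closed line in the conjecture as stated would need separate treatment. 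Two cautions: vertex~(iii), the $L^{2(n+2)/n}_xL^2_t$ estimate with the precise $N$-weight, is asserted as ``accessible'' but is not written out here or in the paper and would need its own verification; and one should check the mixed-norm bilinear interpolation carefully, since $L^p_xL^r_t$ interpolation for bilinear expressions is more delicate than the unmixed case. None of this contradicts the paper; it simply goes beyond what the paper claims, and you are candid about where the gap is.
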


\begin{remark}
	\begin{itemize}
		\item[\rm{(I)}] By the same argument of Subsection \ref{sub.1} of this paper, we have Conjecture \ref{c1.2} $\implies$ Conjecture \ref{c1.1}.
		\item[\rm{(II)}] Conjecture \ref{c1.2} is also closely related to Conjecture \ref{c1.4}. By Proposition \ref{p2.1}, we have
	$$\text{Conjecture \ref{c1.2}}\ \implies\ \text{Conjecture \ref{c1.4} holds for}\ p>\frac{2(n+1)}{n}\ \text{and}\ r\geq 2(n+1) .$$
	The above conjecture is also deeply related to the Fourier restrictive conjecture of paraboloid in $\mathbb{R}^{n+1}$ (see \cite[Theorem 1.1]{lrs13}). However, it is very difficult to prove the restrictive conjecture with the endpoint $\frac{2(n+1)}{n}$. Conjecture \ref{c1.2} provides a new viewpoint to improve Conjecture \ref{c1.4}.
	\end{itemize}
\end{remark}

We now give a necessary condition for \eqref{e4.1a} by modifying an example from  Foschi-Klainerman \cite{fk00}. 
	 
	 \begin{proposition}
	 	Let $p,r\in [2,\infty)$. If \eqref{e4.1a} is true, then
	 	$$\frac{n+2}{p}+\frac{1}{r}\leq \frac{n+1}{2}.$$
	 \end{proposition}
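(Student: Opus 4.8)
The plan is to test the bilinear inequality \eqref{e4.1a} on a suitable pair of caps (Knapp-type blocks) and track the scaling. Since the estimate is not Galilean invariant, I would first use the reduction that is already implicit in the paper: by a Galilean transform one may assume $\hat f_1,\hat f_2$ are supported in $\mathbb B^n(0,1)$ and prove the corresponding bound with $N=1$ on a unit ball $\mathbb B^{n+1}(0,1)$ replaced by the dual region, i.e. it suffices to produce a counterexample to the normalized inequality
$$\||e^{it\Delta}f_1e^{it\Delta}f_2|^{\frac12}\|_{L_x^pL_t^r(\mathbb R^{n+1})}\lesssim \|f_1\|_{L^2}^{1/2}\|f_2\|_{L^2}^{1/2}$$
that forces $\frac{n+2}{p}+\frac1r\le\frac{n+1}{2}$, since the $N$-dependence $N^{1/p-1/r}$ is exactly the one dictated by the Galilean/parabolic rescaling and can be recovered from the normalized statement. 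So I would work at $N=1$.

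Next, following Foschi–Klainerman, I would choose $f_1,f_2$ with $\hat f_i$ equal to (smoothed) indicator functions of two separated $\lambda\times\lambda\times\cdots\times\lambda$ caps $\tau_1,\tau_2$ on the unit ball, one near $e_2$ and one near $-e_2$ (or any two antipodal directions transverse to $e_1$), with $\lambda\to 0$. On the physical side, $e^{it\Delta}f_i$ is, up to rapidly decaying tails, $\sim \lambda^{n}$ in magnitude on a slab/plank $S_i$ dual to $\tau_i$: this plank has thickness $\sim\lambda^{-1}$ in the direction of the cap's normal $(2c(\tau_i),\dots)$, thickness $\sim\lambda^{-2}$ in the "parabolic" direction, and is $\sim\lambda^{-1}$ thick in the remaining $n-1$ spatial directions. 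The key geometric point — and this is where the example is tailored to the $L^p_xL^r_t$ (as opposed to $L^p_{x,t}$) norm — is that because $\tau_1,\tau_2$ point in opposite transverse directions, the two planks $S_1,S_2$ intersect in a region that is $\sim\lambda^{-1}$ in $t$ but only $\sim\lambda^{-1}$ (not longer) in each spatial variable; I would compute $|S_1\cap S_2|$ and, more importantly, the shape of its projection to the $x$-space and its $t$-extent over each $x$-fibre, since the mixed norm weights these asymmetrically.

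Then I would simply evaluate both sides. On the left, $|e^{it\Delta}f_1e^{it\Delta}f_2|^{1/2}\gtrsim \lambda^{n}$ on $S_1\cap S_2$, so
$$\||e^{it\Delta}f_1e^{it\Delta}f_2|^{1/2}\|_{L_x^pL_t^r}\gtrsim \lambda^{n}\,\big(\text{proj}_x(S_1\cap S_2)\big)^{1/p}\,(\ell_t)^{1/r},$$
where $\ell_t\sim\lambda^{-1}$ is the $t$-length of a generic fibre and $|\text{proj}_x(S_1\cap S_2)|\sim \lambda^{-(n+1)}$ (one factor $\lambda^{-2}$ from a parabolic direction collapsing onto $x$-space together with the time integration, plus $\lambda^{-1}$ in each of the remaining spatial directions — I would pin down the exact exponent by drawing the intersection of the two planks explicitly). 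On the right, $\|f_i\|_{L^2}\sim \lambda^{n/2}$, so $\|f_1\|_{L^2}^{1/2}\|f_2\|_{L^2}^{1/2}\sim\lambda^{n/2}$. Comparing powers of $\lambda$ as $\lambda\to0$ yields an inequality between $\frac1p,\frac1r$ and $n$; arranging the bookkeeping so that the borderline case $\frac{n+2}{p}+\frac1r=\frac{n+1}{2}$ comes out is exactly the content. The main obstacle I expect is the geometry in the middle step: correctly describing the plank $S_i$ attached to a parabolic cap and, especially, computing the shape of $S_1\cap S_2$ and the $x$-projection vs.\ $t$-fibre lengths — a naive isotropic count gives the weaker $L^p_{x,t}$ necessary condition $\frac{n+1}{p}+\frac1r\le\frac n2$ of Conjecture \ref{c1.1}, and squeezing out the extra $\frac1p$ requires exploiting that the two caps are transverse so that the overlap is genuinely longer in time than in space. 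Once that geometry is nailed down, the rest is a one-line exponent comparison.
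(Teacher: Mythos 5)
Your framework (a Knapp cap pair, comparison of the scaling parameter on both sides) is the correct strategy and the paper does modify an example of Foschi--Klainerman, but the isotropic cap choice you commit to cannot give the sharp exponent. With $\lambda\times\cdots\times\lambda$ caps $\tau_1,\tau_2$ near $\pm e_2/2$, each dual plank $S_i$ is $\lambda^{-1}$ thick in $n$ directions and $\lambda^{-2}$ long along $(4\pi c_i,1)$; since the two long directions point in opposite ways in the $(x_2,t)$-plane, $S_1\cap S_2$ collapses to a cube of side $\sim\lambda^{-1}$ in \emph{all} $n+1$ coordinates, so there is no $t$--$x$ asymmetry left. With $\|f_i\|_{L^2}\sim\lambda^{n/2}$, $|e^{it\Delta}f_1e^{it\Delta}f_2|^{1/2}\sim\lambda^n$ on $S_1\cap S_2$, and $\|\chi_{S_1\cap S_2}\|_{L^p_xL^r_t}\sim(\lambda^{-n})^{1/p}(\lambda^{-1})^{1/r}$, the inequality \eqref{e4.1a} only forces $\frac{n}{p}+\frac1r\le\frac n2$. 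For large $r$ that merely says $p\ge 2$, whereas the proposition requires $p\ge 2+\tfrac{2}{n+1}$; indeed this is even weaker than the linear condition $\frac{n+1}{p}+\frac1r\le\frac n2$ you expected the naive count to produce, because intersecting the two transverse planks destroys the $\lambda^{-2}$ length entirely.

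Two devices in the paper's example are missing from your plan. First, the caps are anisotropic boxes $\epsilon\times\epsilon^2\times\epsilon\times\cdots\times\epsilon$, thin (width $\epsilon^2$) precisely in the separation direction $\xi_2$; this is the actual Foschi--Klainerman trick. It keeps the overlap $A$ of length $\epsilon^{-2}$ in $t$ and pushes its $x_2$-extent up to $\epsilon^{-2}$, while $\|f_i\|_{L^2}$ only drops to $\epsilon^{(n+1)/2}$. Second, the caps are centered at $\xi_1=N+\tfrac{1}{2}$, offset from $Ne_1$, so the stationary-phase constraint is $|x_1-4\pi(N+\tfrac{1}{2})t|<\epsilon^{-1}$ rather than $|x_1|<\epsilon^{-1}$, i.e.\ $A$ is tilted along $(4\pi(N+\tfrac{1}{2})e_1,1)$. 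Your intuition that the overlap should be ``longer in time than in space'' is therefore inverted: the tilt makes $A$ long in $x_1$ (range $\sim N\epsilon^{-2}$) at the cost of short $t$-fibres ($\sim\epsilon^{-1}/N$), and because $L^p_xL^r_t$ with $p<r$ rewards a large $x$-projection more than a long $t$-fibre, this rotation is what raises $\|\chi_A\|_{L^p_xL^r_t}$ to $N^{1/p-1/r}\epsilon^{-(n+2)/p-1/r}$ and simultaneously produces the $N^{1/p-1/r}$ factor matching the right-hand side of \eqref{e4.1a}. Comparing exponents then gives the target $\frac{n+2}{p}+\frac1r\le\frac{n+1}{2}$. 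Finally, since the $N$-dependence cancels for every $N\ge1$, a Galilean reduction to $N=1$ is unnecessary (and, as $L^p_xL^r_t$ is not Galilean invariant, not straightforwardly available either).
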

	 \begin{proof}
	 	Let $\epsilon>0$,  $$\hat{f_1}(\xi)=\chi_{\{|\xi_1-(N+1/2)|<\epsilon\}}(\xi_1)\chi_{\{|\xi_2-1/2|<\epsilon^2\}}(\xi_2)\chi_{\{|\xi_3|<\epsilon\}}(\xi_3)...\chi_{\{|\xi_n|<\epsilon\}}(\xi_n)$$
	 	and
	 	$$\hat{f_2}(\eta)=\chi_{\{|\eta_1-(N+1/2)|<\epsilon\}}(\eta_1)\chi_{\{|\eta_2+1/2|<\epsilon^2\}}(\eta_2)\chi_{\{|\eta_3|<\epsilon\}}(\eta_3)...\chi_{\{|\eta_n|<\epsilon\}}(\eta_n).$$
	 	Then $$\text{supp}\hat{f_i}\subset \mathbb B^n(Ne_1,1),\quad \|f_i\|_{L^2}\sim \epsilon^{\frac{n+1}{2}},\quad \text{for } i=1,2.$$
	 	The distance between their Fourier supports is at least $\frac12$. 
	 	An easy calculus shows that
	 	\begin{align*}
	 		|\xi|^2
	 		& =|(\xi_1-(N+1/2))+(N+1/2)|^2+|(\xi_2-1/2)+1/2|^2+|\xi_3|^2+...+|\xi_n|^2\\
	 		& \sim N^2+2(N+1/2)(\xi_1-1)+O(\epsilon^2) 	
	 	\end{align*} 
	 	and  
	 	$$|\eta|^2\sim N^2+2(N+1/2)(\eta_1-1)+O(\epsilon^2).$$
	 	Let $A\subset\mathbb R^{n+1}$ denotes the set 	
$$
	x_1 -4\pi (N+1/2)t=O(\epsilon^{-1}),\ x_2=O(\epsilon^{-2}),
	x_i =O(\epsilon^{-1}),3\leq i\leq n, t=O(\epsilon^{-2}).
 $$
	 	 	
	 	For each point $(x,t)\in A$, we have
	 	\begin{align*}
	 		|e^{it\Delta}f_1e^{it\Delta}f_2(x)|
	 		& =\lf|\int_{\mathbb{R}^n\times \mathbb{R}^n}e^{2\pi i [x\cdot(\xi+\eta)-2\pi t(|\xi|^2+|\eta|^2)]}\hat{f_1}(\xi)\hat{f_2}(\eta)d\xi d\eta\r|\\
	 		& \sim \epsilon^{2(n+1)}.
	 	\end{align*}
	 	Hence 
	 	\begin{equation}
	 		\||e^{it\Delta}f_1e^{it\Delta}f_2(x)|^{\frac12}\|_{L^p_xL^r_t(\mathbb{R}^n\times \mathbb{R}^n)}\geq \epsilon^{n+1}\|\chi_A\|_{L^p_xL^r_t(\mathbb{R}^n\times \mathbb{R}^n)}\geq \epsilon^{(n+1)-\frac{1}{r}-\frac{n+2}{p}}N^{\frac1p-\frac1r}.
	 	\end{equation}	
	 	From the estimate \eqref{e4.1a}, we have 
	 	$$\epsilon^{(n+1)-\frac{1}{r}-\frac{n+2}{p}}N^{\frac1p-\frac1r}\lesssim N^{\frac1p-\frac1r}\epsilon^{\frac{n+1}{2}}.$$
	 It implies that $\frac{n+2}{p}+\frac{1}{r}\leq \frac{n+1}{2}$.
	 \end{proof}

\subsection*{Acknowledgement}

The authors would like to express their gratitude to Dr. Zhenbin Cao for many valuable comments and suggestions on the early visions of this paper.




\end{document}